\newcommand{\bftheta}{\theta}
\newcommand{\str}{^*}
\newcommand{\CL}{{\cal L}}
\newcommand{\CH}{{\cal H}}
\newcommand{\CX}{{\cal X}}
\DeclareMathOperator{\Var}{Var}
\DeclareMathOperator{\SKL}{SKL}
\DeclareMathOperator{\E}{E}
\newcommand{\bb}[1]{\mathbb{#1}}
\newcommand{\mf}[1]{\mathfrak{#1}}
\newcommand{\R}{\ensuremath{\bb{R}}}
\newcommand{\Q}{\mathcal{Q}}
\newcommand{\N}{\mathcal{N}}
\newcommand{\X}{\mathcal{X}}
\newcommand{\abs}[1]{\left\lvert#1\right\rvert}
\newcommand{\brac}[1]{\left\{#1\right\}}
\newcommand{\ip}[1]{\left\langle#1\right\rangle}
\newcommand{\norm}[1]{\left\lVert#1\right\rVert}
\DeclareMathOperator*{\argmin}{arg\,min}
\newtheorem{theorem}{Theorem}
\newtheorem{assumption}{Assumption}
\newtheorem{lemma}{Lemma}
\newtheorem{remark}{Remark}
\newtheorem{cor}{Corollary}
\newtheoremstyle{appendixtheorem}
  {\topsep}   
  {\topsep}   
  {\itshape}  
  {0pt}       
  {\bfseries} 
  {.}         
  {5pt plus 1pt minus 1pt} 
  {#1 A.#2}          
\theoremstyle{appendixtheorem}
\newtheorem{atheorem}{Theorem}
\newtheorem{alemma}{Lemma}
\newtheorem{aprop}{Proposition}
\title{Smoothing Spline Semiparametric Density Models}
\author{$\hbox{Jian Shi}^1 \footnote{Joint first authors.}, \hbox{Jiahui Yu}^2 \footnotemark[\value{footnote}], \hbox{Anna Liu}^2, \hbox{Yuedong Wang}^1$}
\date{}
\begin{document}
\maketitle

\center{$\hbox{University of California, Santa Barbara}^1$}\\
\center{$\hbox{University of Massachusetts, Amherst}^2$}

\abstract{Density estimation plays a fundamental role in many areas of statistics and machine
learning. Parametric, nonparametric and semiparametric density estimation methods have been proposed in the literature. Semiparametric density models are flexible in incorporating domain knowledge and uncertainty regarding the shape of the density function. Existing literature on semiparametric density models is scattered and lacks  a systematic framework. In this paper, we consider a unified framework based on the reproducing kernel Hilbert space for modeling, estimation, computation and theory. We propose general semiparametric density models for both a single sample and multiple samples which include many existing semiparametric density models as special cases. We develop penalized likelihood based estimation methods and computational methods under different situations. We establish joint consistency and derive convergence rates of the proposed estimators for both the finite dimensional Euclidean parameters and an infinite-dimensional functional parameter. We validate our estimation methods empirically through  simulations and an application.
}

\section{Introduction}
Density estimation plays a fundamental role in many areas of statistics and machine
learning. The estimated density functions are useful for model building and diagnostics,
inference, prediction and clustering. Traditionally there are two distinct approaches for density
estimation: maximum likelihood method within a parametric family and nonparametric
methods. Parametric models are in general parsimonious with meaningful and interpretable
parameters \cite{pearson1902systematic,pearson1902systematic2,kendall1946advanced,fisher1997absolute}. Nonparametric methods based on minimal assumptions are in general more
flexible \cite{Silverman,izenman1991review,Gu2013bk}. Often in practice it is desirable to model some components of the density function
parametrically while leaving other components unspecified. Many semiparametric density
models have been proposed for different purposes, including the partially linear semiparametric density models \cite{yang2009penalized}, the mixture models \cite{olkin1987semiparametric,bordes2006semiparametric,ma2015flexible}, mixture models with exponential tilts for multiple samples \cite{Anderson1972,Qin1999,ZouFineYandell2002,Tan2009}, combinations of parametric and nonparametric functions \cite{hjort1995nonparametric,efron1996using}, semiparametric models for Bayesian testing \cite{lenk2003bayesian}, and transformation models including the location-scale family distributions \cite{wand1991transformations,potgieter2012nonparametric}. However, a systematic framework with a broad formulation of the semiparametric density models is lacking. In this paper, we propose a general  framework that includes the above mentioned semiparametric density models as special cases. We develop a smoothing spline based estimation method for the general model and prove the asymptotic results based on our estimation method. The general framework provides a unified platform for the developments of estimation, computation, and theory.

For a single random sample, $X_{i}$, $i=1,\ldots,n$, from a common probability density $f(x)$ on a general domain $\CX$, we consider the following general semiparametric density model 
\begin{equation}
f(x)=\frac{\exp\left\lbrace\eta(x;\bftheta,h)\right\rbrace}{\int \exp\left\lbrace\eta(x;\bftheta,h)\right\rbrace dx}, \label{Semimodel}
\end{equation}
where $\eta$ is a known function of $x$ given $\bftheta$ and $h$, which will be referred to as the logistic transformation of $f$. The  parameter $\bftheta \in \mathbb{R}^p$ and the nonparametric function $h$ are unknown and need to be estimated. Often certain conditions, which depend on the form of $\eta(x;\theta,h)$, are necessary to make model \eqref{Semimodel} identifiable. We assume that model \eqref{Semimodel} is identifiable, and discuss identifiability conditions for specific models in the following sections.

Many  existing semiparametric models are special cases of model (\ref{Semimodel}).  \citeasnoun{olkin1987semiparametric} proposed a mixture of a parametric and a nonparametric density function, namely,
$f(x)=\theta_0 f_1(x,\theta_1)+(1-\theta_0)f_2(x)$,
where $f_1(x,\theta_1)$ is a known density function up to parameters $\theta_1 \in \R^p$, $\theta_0 \in [0,1]$ is an unknown weight parameter, and $f_2(x)$ is a nonparametric density function.
They showed that the semiparametric density estimate provides a compromise between the parametric and nonparametric estimates. It is a special case of model (\ref{Semimodel}) with $\eta(x)=\log\{\theta_0 f_1(x,\theta_1)+(1-\theta_0)f_2(x)\}$, where $\theta_0$ and $\theta_1$ are the parameters and $f_2$ is the nonparametric function.
 \citeasnoun{hjort1995nonparametric} proposed a density estimation procedure by starting out with a parametric density estimate $f_1(x,\hat{\theta})$, and then multiplying with a nonparametric kernel type estimate of a correction function $r(x)=f(x)/f_1(x,\hat{\theta})$.
It was shown that their semiparametric density model can perform better than a nonparametric fit when the true density is in the neighborhood of the initial parametric density. Their model is a special case of model (\ref{Semimodel}) with $\eta(x)=\log\{f_1(x,\bftheta)\}+ h(x)$, where $\theta$ is the parameter and $h(x) = \log \{r(x)\}$ is the nonparametric function.
\citeasnoun{efron1996using} proposed a specially designed exponential family for density estimation, 
$f(x)=f_{0}(x)\exp\{\theta_{0}+t^T(x)\theta_{1}\}$,
where $f_{0}(x)$ is a carrier density and estimated by kernel density estimation, $t(x)$ is a $p$-dimensional vector of sufficient statistics, $\theta_{1}$ is a $p$-dimensional vector of parameters, and $\theta_{0}$ is a normalizing parameter making $f(x)$ integrate to 1 over $\mathcal{X}$. The proposed method matches the estimated expectation of $t(x)$ with its sample expectation. 
The model is a special case of model (\ref{Semimodel}) with 
$\eta(x)=\log \{f_0(x)\}+\theta_0+ t^T(x)\theta_1$, where $f_0(x)=\exp \{h(x)\} / \int{\exp \{h(x)\} dx}$  is the carrier density given by nonparametric function $h(x)$, and $\theta_0$ and $\theta_1$ are the parameters.
\citeasnoun{lenk2003bayesian} proposed a flexible semiparametric model for Bayesian testing of 
$f (x|\theta , Z)= {\exp \{\alpha^T(x)\theta+Z(x) \} }/{\int_{\mathcal{X}}\exp \{ \alpha^T(x) \theta + Z(x) \}dG(x)}$,
where  $\alpha(x)=[\alpha^{1}(x),\ldots,\alpha^{m}(x)]^T$ is a vector of $m$ nonconstant functions, $Z$ is a zero mean, second-order Gaussian process with bounded, continuous covariance function, and $G$ is a known dominating measure on the support $\mathcal{X}$. The semiparametric model allows the predictive distribution
to deviate from the parametric family. If the parametric family is inadequate, the semiparametric predictive density coherently adapts to the data. 
\citeasnoun{yang2009penalized} also used the logistic transformation of density function as in \citeasnoun{lenk2003bayesian} with $Z(x)$ modeled as an unknown smooth function. The model in \citeasnoun{yang2009penalized} is a special case of model (\ref{Semimodel}) with $\eta(x)=\alpha^T(x)\theta+h(x)$.
\citeasnoun{wand1991transformations} considered density estimation of data with local features, and proposed a data transformation technique so that global smoothing is appropriate.
This transformation model is a special case of model (\ref{Semimodel}) with $\eta(x)=h(t(x; \theta))$ where $t(x; \theta)$ is the transformation, $\theta$ is the parameter and $h$ is the nonparametric function.

In the case of multiple samples, assume there are $m$ independent groups, and in each group $l=1,\cdots, m$, there are $n_l$ iid observations such that $X_{l1},\ldots,X_{ln_l}\overset{iid}{\sim} f_l(x; \bftheta, h)$ on domain $\CX_l$.
We consider the following general semiparametric density model 
\begin{equation}
f_l(x, \bftheta, h)=\frac{\exp\left\lbrace\eta_l(x;\bftheta,h)\right\rbrace}{\int \exp\left\lbrace\eta_l(x;\bftheta,h)\right\rbrace dx},\label{SemimodelM}
\end{equation}
where $\eta_l$, the logistic transformation of $f_l$, is a known function of the parameter $\bftheta$ and the nonparametric function $h$.
We are interested in the estimation of $\bftheta\in \mathbb{R}^p$, $h$, and ultimately the overall density function $f_l(x, \bftheta, h)$ from the observations. 

\citeasnoun{potgieter2012nonparametric} considered a two sample transformation model. Suppose that $X_{11},\ldots,X_{1n_1}\overset{iid}{\sim} f_1$ and $X_{21},\ldots,X_{2n_2}\overset{iid}{\sim}f_2$, and $X_2$'s have the same distribution as $X_1$'s after a certain invertible transformation parametrized by $\bftheta$, denoted as $t(x;\bftheta)$.
They considered the nonparametric estimation of the density function based on asymptotic likelihood, and showed that the estimators are often near optimal when compared to fully parametric methods.  The model is a special case of our model \eqref{SemimodelM} with
$\eta_1(x; \theta, h) = h(x)$, and $\eta_2(x; \theta, h) = h(t(x; \theta))\abs{t^\prime(x;\theta)}$,
where $h$ is the logistic transformation of $f_1$.
\citeasnoun{Anderson1972}, \citeasnoun{Qin1999}, \citeasnoun{ZouFineYandell2002}, and \citeasnoun{Tan2009} considered exponential tilt mixture models for biased sampling and case control studies, where multiple samples are collected and each sample follows an exponential tilt mixture model, that is, for sample $l$, 
$X_{l1},\ldots,X_{ln_1}\overset{iid}{\sim} \lambda_lf(x) +(1-\lambda_l)g(x)$, and 
$\log(f(x)/g(x))=\theta^1+\theta^2 x$.
We can see that this model is also a special case of model \eqref{SemimodelM} with
$
\eta_l(x; \theta, h)=\log\{\lambda_{l}+(1-\lambda_{l})\exp(\theta^1+\theta^2 x)\} +h(x)$,
where $\theta = [\theta^1,\theta^2]$ is the parameter and $h$ is the logistic transformation of $f$.

We consider the smoothing spline based estimation method for models (\ref{Semimodel}) and (\ref{SemimodelM}), that is, we assume that $h \in \CH$, where $\CH$ is a reproducing kernel Hilbert space (RKHS). Our work builds upon the existing literature and extends it to include semiparametric nonlinear density models. We develop computation methods based on profile penalized likelihood and backfitting, and 
joint asymptotic theory of the parametric and nonparametric components. For the single sample case, smoothing spline based nonparametric density estimation has been considered by many authors, including \citeasnoun{silverman_1982}, \citeasnoun{CoxOsullivan}, and \citeasnoun{Gu2013bk}. \citeasnoun{yang2009penalized} extended such methods for nonparametric models to a semiparametric density estimation model, where $\eta$ is linear in both the parametric and nonparametric components $\bftheta$ and $h$, as discussed above. 
Great progress has been made toward joint asymptotic theory in semiparametric models recently, with the seminal work by \citeasnoun{cheng2015joint} and follow-up papers \cite{ZhaoChengLiu16,ChaoVolgushevCheng17}. 
Prior to this, semiparametric asymptotic theory focused on the convergence of the parametric component, whereas the nonparametric component was usually considered as a nuisance parameter \cite{Bickelbk,Tsiatisbk,Kosorokbk}. We note that one approach is to extend the framework for joint asymptotics of the semiparametric linear regression model in \citeasnoun{cheng2015joint} to the semiparametric linear density model, and the same joint rate of convergence as derived by \citeasnoun{yang2009penalized} can be obtained. This approach relies heavily on the extension of the reproducing kenel of $\CH$, where the nonparametric component $h$ lives, to the product parameter space $\R^p \times \CH$ using the linear form of $\eta$ in $\bftheta$ and $h$. However, such an extension for a general nonlinear function $\eta(\bftheta,h)$ is nontrivial to obtain. In order to develop joint asymptotics for model (\ref{Semimodel}) with general $\eta$ and for model (\ref{SemimodelM}) with general $\eta_l$, we extend an approach from \citeasnoun{gu_qiu_1993} for nonparametric density estimation to the general nonlinear semiparametric model through a first order linearization technique motivated by the study of the nonlinear nonparametric regression model in \citeasnoun{o’sullivan_1990}. In particular, we specify the additional assumptions needed for our general framework and we introduce an appropriate metric in which the rate of convergence is derived for the joint estimator. Additional assumptions include, for example, smoothness criteria for $\eta$ and the existence of certain bounded linear operators related to the first Fr\'echet partial derivatives of $\eta$, all of which are redundant when $\eta$ is linear in $(\theta,h)$. In addition to the rate of convergence of the joint estimator, we also obtain the convergence rate of the parametric component in the standard $l^2$ norm as well as the convergence rate of the overall density function in the symmetrized Kullback-Leibler distance.

In Section  \ref{sec::Algm}, we consider models \eqref{Semimodel} and \eqref{SemimodelM} in three scenarios and develop different computation procedures. Asymptotic properties of the proposed estimators are considered in Section \ref{sec::theory}. Simulations are conducted to evaluate the proposed estimation procedures in Section \ref{sec::semiSimulation}. Section \ref{sec::realAnalysis} shows applications to real dataset.

\section{Some notations}
We begin by introducing some notations that will be used throughout the rest of the paper. Unless specified otherwise, let ${a = \left[a^k \right]_{k=1}^{p}}$ denote any $p \times 1$ vector, whose the $k$th element is $a^k$. The standard $l^2$ norm of $a \in \R^p$ is denoted $\norm{a}_{l^2} = [\sum_{k=1}^p (a^k)^2]^{1/2}$. We also denote any $p \times p$ matrix ${M = \left[M^{i,j} \right]_{i,j=1}^p}$, where $M^{i,j}$ represent the $(i,j)$th entry of the matrix. If there exist positive constants $c_1, c_2$, such that $c_1A \leq B \leq c_2A$, we write $A \sim B$. We use the notation $\E(\cdot)$ to represent the expectation taken over the joint sample distribution, whose density function is $f(x_1,\ldots,x_m) = \prod_{l=1}^{m} f_l(x_l)$. For simplicity, we will also sometimes use $\tau$ to represent the pair $(\theta, h)$, i.e., $\tau = (\theta, h)$ and $\eta_l(\tau)=\eta_l(\theta, h)$.

Denote the product parameter space as $\Q \equiv \R^p \times \CH$. Let $D_h$ be the Fr\'echet partial differential operator with respect to $h$, and let $D_{\theta}$ be the Fr\'echet partial differential operator with respect to $\theta$. If $X$ and $Y$ are any (real) Banach spaces, $\mathscr{L}(X,Y)$ represents the space of bounded linear operators from $X$ to $Y$. Note that for any function $f:\Q \rightarrow Y$, the Fr\'echet partial derivatives of $f$ are maps $D_{\theta}f: \Q \rightarrow \mathscr{L}(\R^p,Y)$ and $D_{h}f: \Q \rightarrow \mathscr{L}(\CH,Y)$ by definition. 

\section{Penalized likelihood estimation} \label{sec::Algm}
In this section, we will focus on model \eqref{SemimodelM}. Model \eqref{Semimodel} is a special case with $m=1$. We propose to estimate $\bftheta = [\theta^i]_{i=1}^p \in \R^p$ and $h \in \CH$ in \eqref{SemimodelM} as the minimizer of the following penalized likelihood with respect to $(\bftheta, h) \in \Q$:
\begin{align}
\sum_{l=1}^{m}\brac{ -\frac{1}{n_l} \sum_{i=1}^{n_l} \eta_l(X_{li};\theta,h) +
\log \int_{\X_l} e^{\eta_l(x;\theta,h)}dx } + \frac{\lambda}{2} J(h),
\label{PL}
\end{align}
where the first part of \eqref{PL} is the joint negative log likelihood, $J(h)$ is a square seminorm penalty, and $\lambda $ is the smoothing parameter. Assume that $\mathcal{H}_{0}=\{h:J(h)=0 \}$ is a $k$-dimensional space with basis functions $\psi(x)=[\psi^i(x)]_{i=1}^k$, then $\CH=\CH_0\oplus \CH_1$, where $\CH_1$ is an RKHS with reproducing kernel (RK) denoted by $R_J$.
 
The minimizer of the penalized likelihood (\ref{PL}) does not fall in a finite dimensional space. In the following,  we consider estimation of the model \eqref{SemimodelM} under the following three scenarios: additive, general, and transformation models.

\subsection{The additive model}\label{sec::additive}
The model \eqref{SemimodelM} is additive when $\eta_l$ is the sum of a term involving the parameters and the nonparametric component, that is, 
\begin{equation}
\eta_l(x;\bftheta,h)= \alpha_l(x; \mathbb{\bftheta}) +h(x), \label{additive}
\end{equation}
where $\alpha_l$ is a known function of  $x$ with unknown parameters $\bftheta$.  The model proposed by  \citeasnoun{efron1996using}, \citeasnoun{lenk2003bayesian}, \citeasnoun{yang2009penalized}, and \citeasnoun{hjort1995nonparametric} are special cases of $\eqref{additive}$ with $m=1$. In particular,  the first three of them considered the linear additive model in which $\alpha_l(x;\theta)=\bftheta^T a(x)$,
where $a(x) = [a^i(x)]_{i=1}^p$.
The model proposed in \citeasnoun{ZouFineYandell2002} is an example of the nonlinear additive model with
$\alpha_l(x)=\log\{\lambda_{l}+(1-\lambda_{l})\exp(\theta^1+\theta^2 x)\}$,
where $0\le\lambda_l\le1$ and $\lambda_1\ne \cdots \ne \lambda_m$.  

For the additive semiparametric density model (\ref{additive}), the penalized likelihood objective (\ref{PL}) becomes the following:  
\begin{equation}
	  \sum_{l=1}^{m}  \left\{-\frac{1}{n_l}\sum_{i=1}^{n_l}\alpha_l(X_{li};\bftheta)-\frac{1}{n_l}\sum_{i=1}^{n_l}h(X_{li})+\log\int_{\X_l} \exp\{\alpha_l(x;\bftheta)+h(x)\} dx\right\} +\frac{\lambda}{2}J(h). \label{additivePL2}
\end{equation}



We propose a profiled penalized likelihood based approach to optimize (\ref{additivePL2}).  First, with a fixed ${\bftheta}$, 
optimizing (\ref{additivePL2}) with respect to $h\in\CH$ is equivalent to optimizing the following weighted penalized likelihood:
\begin{equation}
\sum_{l=1}^m\left\lbrace -\frac{1}{n_l}\sum_{i=1}^{n_l} h(X_{li})+\log\int_{\X_l} w_l(x)e^{h(x)}dx \right\rbrace+\frac{\lambda}{2}J(h),\label{plfadditive}
\end{equation}
where $w_l(x)=\exp\{\alpha_l(x,\bftheta)\}$. As in Gu (2013), we approximate the solution by
\begin{equation}
\hat{h}_{\bftheta,\lambda}(x) = \sum_{\nu = 1}^{k}d^{\nu}\psi^{\nu}(x) + \sum_{j=1}^{q}c^{j}R_{J}(Z_{j},x)= \psi^{T}(x)d+\xi^{T}(x)c,\label{solutionOfH2}
\end{equation}
where $\{Z_j,j=1,\ldots,q \}$ is a random sample of $\{X_{li}, i=1,\ldots,n_l, l=1,\ldots,m\}$, $c=[c^{j}]_{j=1}^q$, ${d=[d^{\nu}]_{\nu=1}^k}$, $\psi(x) = [\psi^{\nu}(x)]_{\nu=1}^{k}$, and $\xi(x)=[R_{J}(Z_{1},x),\ldots, R_{J}(Z_{q},x)]^T$.

For $l=1,\ldots, m$ and $i=1,\ldots,n_l$, let $N=\sum_{l=1}^m n_l$, $W$ be the $N \times N$ diagonal matrix whose $r$th diagonal entry is $n_l$ where $r=(l-1)n_l+i$, $S$ be the $N\times k$ matrix whose $(r, \nu$)th entry is $\psi^{\nu}(X_{li})$ for $\nu = 1, \ldots,k$, and $R$ be the $N \times q$ matrix whose $(r, j)$th entry is $\xi^{j}(X_{li})=R_{J}(Z_{j}, X_{li})$ for $j = 1, \ldots, q$. Let $Q$ be the $q\times q$ matrix whose $(i, j)$th entry is $R_{J}(Z_{i}, Z_{j})$. Define for $l=1,\ldots, m$,
$\mu_{l,f}(g)={\int gw_le^{f}dx}/{\int w_le^{f}dx}$,
$V_{l,f}(g, h) = \mu_{l,f}(gh) - \mu_{l,f}(g)\mu_{l,f}(h)$,
and $V_{l,f}(g) = V_{l,f}(g, g)$. These notations extend in the obvious way to vectors and matrices of functions. 
 
We plug (\ref{solutionOfH2}) back into (\ref{plfadditive}) and compute minimizers $c$ and $d$ using the Newton iterative method. Let $\tilde{h}_{\bftheta, \lambda}(x) = \psi^{T}(x)\tilde{d}+\xi^{T}(x)\tilde{c}$,  where $\tilde{c}=[\tilde{c}^{j}]_{j=1}^q$ and $\tilde{d}=[\tilde{d}^{\nu}]_{\nu=1}^{k}$ are $c$ and $d$  vectors calculated in the previous step of the iteration. Denote 
\[\bar{\mu}_{\tilde{h}}({\psi})=\sum_{l=1}^m\mu_{l,\tilde{h}}({\psi})=\left[\sum_{l=1}^m\mu_{l,\tilde{h}}({\psi}^i)\right]_{i=1}^k,\quad V_{\psi, \tilde{h}} =\sum_{l=1}^mV_{l,\tilde{h}}(\psi, \tilde{h})= \left[\sum_{l=1}^mV_{l,\tilde{h}}(\psi^i, \tilde{h})\right]_{i=1}^{k},\]
\[V_{\psi,\psi}=\sum_{l=1}^mV_{l,\tilde{h}}(\psi,\psi^T) = \left[\sum_{l=1}^mV_{l,\tilde{h}}(\psi^i,\psi^j)\right]_{i,j=1}^k, \quad V_{\psi,\xi}=\sum_{l=1}^mV_{l,\tilde{h}}(\psi,\xi^T) = \left[\sum_{l=1}^mV_{l,\tilde{h}}(\psi^i,\xi^j)\right]_{i=1,j=1}^{k,q},\] 
and note $V_{\xi,\psi}$ is the transpose of $V_{\psi,\xi}$. We can also define
$\bar{\mu}_{\tilde{h}}({\xi})$, $V_{\xi,  \tilde{h}}$, and $V_{\xi,\xi}$ similarly to the first three equations above by replacing $\psi$ by $\xi$ and $k$ by $q$. Similar to \citeasnoun{Gu2013bk}, the Newton updating equation is 
\begin{equation}
\left( \begin{array}{cc}
V_{\psi, \psi } & V_{\psi, \xi } \\
V_{\xi, \psi } & V_{\xi, \xi } + \lambda Q
\end{array} \right) 
\left( \begin{array}{c}
d\\
c
\end{array} \right) =
\left( \begin{array}{c}
S^{T}W\boldsymbol{1}_N - \bar{\mu}_{\tilde{h}}({\psi})+V_{\psi, \tilde{h}} \\
R^{T}W\boldsymbol{1}_N- \bar{\mu}_{\tilde{h}}({\xi})+V_{\xi,\tilde{h}}
\end{array} \right).\label{additiveIteration2}
\end{equation}

At convergence of the Newton method, we denote the estimate of $h$ with fixed $\bftheta$ and $\lambda$ as $\hat{h}_{\bftheta, \lambda}$. 
Similar to  \citeasnoun{Gu2013bk}, the smoothing parameter $\lambda$ is selected by optimizing the relative Kullback-Leibler (KL) distance between the density associated with the estimate, $\eta_l(x;\bftheta,\hat{h}_{\bftheta,\lambda})$, and   the true density corresponding to $\eta_l(x;\bftheta,h)$ for $l=1,\ldots, m$,
\begin{equation}
\text{RKL}(\eta_l(x;\bftheta,h),\eta_l(x;\bftheta,\hat{h}_{\bftheta, \lambda}), l=1,\ldots,m)=\sum_{l=1}^m\left\{\log\int_{\X_l} e^{\eta_l(x;\bftheta,\hat{h}_{\bftheta,\lambda})}dx-\mu_{l,h}(\eta_l(x;\bftheta,\hat{h}_{\bftheta, \lambda}))\right\}.
\end{equation}
Following similar derivations in \citeasnoun{Gu2013bk}, an approximated cross-validation estimate of the relative Kullback-Leibler distance is 

\begin{equation}
V(\lambda,\bftheta)=\sum_{l=1}^m\left\{-\frac{1}{n_l}\sum_{i=1}^{n_l}\eta_l(X_i;\bftheta,\hat{h}_{\bftheta,\lambda})+\log\int_{\X_l} e^{\eta_l(x;\bftheta,\hat{h}_{\bftheta,\lambda})}dx+\alpha \frac{\text{tr}(P_{l,\bf1}^{\perp}\breve{R_l}H^{-1}\breve{R_l}^TP_{l,\bf1}^{\perp})}{n_l(n_l-1)}\right\},
\label{cv}\end{equation}
where $P_{l,\bf1}^{\perp}=I_{n_l}-\bold{1}_{n_l}\bold{1}_{n_l}^T/n_l$, $\breve{\xi}=(\psi^T,\xi^T)^T$, ${\breve{R}_l^T=(\breve{\xi} (X_{l1}),\ldots,\breve{\xi}(X_{ln_l}))}$, $H=\sum_{l=1}^mV_{l,\tilde{h}}(\breve{\xi},\breve{\xi}^T)+\text{diag}(0,\lambda Q)$, and $\alpha=1$. We may set a larger $\alpha$ to prevent under-smoothing, such as $\alpha=1.4$ as in \citeasnoun{Gu2013bk}.

The optimal smoothing parameter for  a given $\bftheta$, $\lambda_{\bftheta}$, is defined as the $\lambda$ that minimizes the cross validation criterion (\ref{cv}). The corresponding nonparametric function estimate $\hat{h}_{\bftheta,\lambda_{\bftheta}}$ is denoted as $\hat{h}_{\bftheta}$ for simplicity.
Plugging  $\hat{h}_{\bftheta}$ into \eqref{additivePL2}, we have the  profiled penalized likelihood 
\begin{equation}
\sum_{l=1}^m\left\{-\frac{1}{n_l}\sum_{i=1}^{n_l} (\alpha_l(X_{li};\bftheta)+\hat{h}_{\bftheta}(X_{li}))+\log\int_{\X_l} \exp\{\alpha_l(x;\bftheta)+\hat{h}_{\bftheta}(x)\} dx\right\}+\frac{\lambda_{\bftheta}}{2} J(\hat{h}_{\bftheta}). \label{nppl2}
\end{equation}
Then the estimate of $\bftheta$, $\hat{\bftheta}$, is the minimizer of \eqref{nppl2}. The minimization is achieved by the line search algorithm in \citeasnoun{nelder1965simplex}. The final estimate of $h$ is $\hat{h}_{\hat{\bftheta}}$.

In fact, for a given $\bftheta$, minimizing the  cross validation criterion (\ref{cv}) to obtain $\lambda_{\bftheta}$ and minimizing (\ref{additivePL2}) 
to obtain $\hat{h}_{\bftheta}$ can be easily implemented by utilizing the 
 {\em ssden} function in the R package {\em gss}  with the option {\em bias} to specify the weights \cite{gss}.

\subsection{The general nonlinear  model}\label{sec::nonadditive}
In this section, we consider the general form of $\eta_l$ in model \eqref{SemimodelM} that is not additive.
 For example, when $m=1$, 
 a mixture model of two densities assumes 
 $\eta(x;\theta,h)=\log\left\{\theta_0 f_1(x,\theta_1)+(1-\theta_0)h(x)\right\}$,
 where $0\leq \theta_0 \leq 1$ and $\theta = [\theta_0,\theta_1^1,\ldots,\theta_1^p]^T$. Originally considered by \citeasnoun{olkin1987semiparametric}, this model is often referred to as the two-component mixture model. Different estimation methods and applications can be found in \citeasnoun{bordes2006semiparametric} and \citeasnoun{ma2015flexible}.

We will extend the Gauss-Newton procedure for the estimation of $\bftheta$ and $h$ to the general nonlinear semiparametric density model. For fixed $\bftheta$, let $ \tilde{h}$ be the current estimate of $h$.  We assume that the  Fr\'echet derivative of $\eta_l$ with respect to $h$ at $(\theta,\tilde{h})$ exists, and we denote it by $D_{h}\eta_l(x; \theta, \tilde{h}) \equiv \CL_{l,x}$. Approximating $\eta_l$ by its first order Taylor expansion at $\bftheta$ and $ \tilde{h}$, we have
\begin{equation}
\eta_l(x; \bftheta,h) \approx  \eta_l(x;\bftheta,  \tilde{h})+ \CL_{l,x}(h- \tilde{h})\equiv r_l(x;\bftheta, \tilde{h})+\CL_{l,x} h,\label{ApproxNonadditive}
\end{equation}
where $r_l(x;\bftheta, \tilde{h})=\eta_l(x;\bftheta,  \tilde{h})-\CL_{l,x}  \tilde{h}$. 

Through this linearization, the nonlinear semiparametric density is approximated by an additive model as in Section \ref{sec::additive} but with the
evaluational functional replaced by the bounded linear functional $\CL_{l,x}$. Note that the method in Section \ref{sec::additive} can be extended to bounded linear functionals
with $\psi^\nu(X_{li})$ replaced by $\CL_{l,X_{li}} \psi^\nu$ and $\xi^j(X_{li})$ replaced by $\CL_{l,X_{li}}R_J(Z_j,\cdot)$, $j=1,\cdots, q$.

In summary, for the general nonlinear semiparametric density model, there are two loops of iterations. In the inner loop, for a fixed $\theta$, we estimate $h$ by the extended Gaussian-Newton iteration until convergence, where smoothing parameters are estimated at each iteration. In the outer loop, the profile penalized likelihood (\ref{nppl2}) is optimized with respect to $\bftheta$.

\subsection{The transformation model}\label{sec::powEst}
Model \eqref{SemimodelM} is called   a transformation model if
\begin{equation}
\eta_l(x;\bftheta,h)=h(\alpha_l(x;\bftheta)), \label{nonadditive}
\end{equation}
where $\alpha_l$'s are known differentiable and invertible functions with unknown parameters $\bftheta$.  When $\theta$ is fixed,  the logistic density of $Y_l=\alpha_l(X_l;\theta)$ is $\eta_l(y)=h(y)-\log\left(\vert \alpha_l'(\alpha_l^{-1}(y;\bftheta))\vert\right)$. Then the problem reduces to nonparametric density estimation with different weights.  The two sample location-scale family density model \cite{potgieter2012nonparametric} belongs to this case with $\alpha_1(x;\bftheta)=x$ and $\alpha_2(x;\bftheta)=(x-\mu)/\sigma$, where $\mu$ and $\sigma$ are the location and scale parameters and $\bftheta=(\mu,\sigma)$.

We note that the profiled penalized likelihood method  developed for the general nonlinear model in Section \ref{sec::nonadditive} applies to the 
 transformation model. Nevertheless,  the following backfitting procedure works better for the transformation model.   
First, provide initial value $\bftheta_0$  for  the parameter $\bftheta$. Next we iterate between the following two steps until convergence:
\begin{enumerate}
\item Based on current estimates $\tilde{\theta}$, transform the data using $Y_{li}=\alpha_l(X_{li};\tilde{\theta})$ and we have 
$\eta_Y(y)=h(y)-\log(\vert \alpha'(\alpha^{-1}(y;\tilde{\theta}))\vert)$. With the transformed data, update $h$ by minimizing the penalized likelihood
      \begin{equation}
      \sum_{l=1}^m\left\{-\frac{1}{n_l}\sum_{i=1}^{n_l}h(Y_{li})+\log\int\exp\left\lbrace h(y)\right\rbrace  w_l(y)dy\right\} +\frac{\lambda}{2} J(h)\label{backfittingTransform}
      \end{equation}
with the weight function $w_l(y)=1/\vert \alpha_l'(\alpha_l^{-1}(y;\tilde{\theta}))\vert$. 
\item Based on current estimate $\tilde{h}$ of $h$, update $\bftheta$ as the MLE based on the likelihood of 
$X_{11},\ldots,X_{mn_m}$.
\end{enumerate}
The minimization of the weighted penalized likelihood \eqref{backfittingTransform} is solved by the Newton method proposed in \citeasnoun{Gu2013bk}.


\section{Joint asymptotic theory}
\label{sec::theory}

In this section, we develop a joint asymptotic theory for the penalized likelihood estimators for models (\ref{Semimodel}) and (\ref{SemimodelM}). 

For $1 \le l \le m$, $f_l(x)$ is a density function over the sample space $\X_l$ of the form given by \eqref{SemimodelM}.
We consider $(\theta,h) \in \Q = \R^p \times \CH$ and a known function $\eta_l: \Q \to L^2_{0}(\X_l)$, where $L_{0}^{2}(\X_l) = L_{\tau_0}^{2}(\X_l) \ominus \{1\}$, $L^2_{\tau_0}(\X_l)$ is the space of functions on $\X_l$ with finite second moment with respect to the measure given by the true density $f_l(x;\theta_0,h_0)$, and $\tau_0 = (\theta_0,h_0)$ is the true parameter. The constant functions have been removed for identifiability. 
In general, $\eta_l(\theta,h)$ need not be one-to-one on $\Q$. However, for identifiability, we require that for all $l = 1, \ldots, m$, $\eta_l(\theta,h)$ is one-to-one in a neighborhood $\N_{\theta_0} \times \N_{h_0}$ of the true parameter. We specify the precise assumptions on this neighborhood in Section \ref{Properties of eta}.

Recall the penalized likelihood for estimating the parameter $(\theta, h)$ given by \eqref{PL}, and denote it as $\sum_{l=1}^m \mf{L}_{n_l,\lambda}(\theta,h)$. Let $\mf{L}_{n_l}(\theta,h) \equiv { -n_l^{-1} \sum_{i=1}^{n_l} \eta_l(X_{li};\theta,h) +\log \int_{\X_l} \exp\{\eta_l(x;\theta,h)\}dx }$.
If the joint negative log likelihood $\sum_{l=1}^{m} \mf{L}_{n_l}(\theta,h)$ is convex, under certain conditions (see Theorem~$2.9$ in \citeasnoun{Gu2013bk}), the existence of the penalized semiparametric estimator given by
\[(\hat{\theta}, \hat{h}) \equiv \argmin_{(\theta,h) \in \Q}\ \sum_{l=1}^m \mf{L}_{n_l,\lambda}(\theta,h)\]
is guaranteed. In general, the functions $\eta_l(\theta,h)$ need not satisfy these conditions, so for our analysis on joint consistency, we assume that $\sum_{l=1}^m \mf{L}_{n_l}(\theta,h)$ is convex and achieves a unique local minimum at $(\hat{\theta}, \hat{h})$ in $\N_{\theta_0} \times \N_{h_0}$. We study the consistency of this estimator and obtain the rate of convergence of $(\hat{\theta}, \hat{h})$ to the true parameters $(\theta_0,h_0)$ as $n = \min_{1\le l \le m} \{n_l\} \to \infty$ and $\lambda \to 0$. In our analysis, we may drop the subscript $l=1$ when $m=1$ for convenience.

 
Note that by adapting the methods in \citeasnoun{cox_1988}, \citeasnoun{CoxOsullivan}, and \citeasnoun{o’sullivan_1990}, we have established local existence and uniqueness of $(\hat{\theta}, \hat{h})$. A detailed proof can be found in the Appendix. 

\begin{subsection}{Outline of the proof of consistency} 
By our assumption for $\sum_{l=1}^m \mf{L}_{n_l,\lambda}(\theta,h)$, the estimator $\hat{\tau} = (\hat{\theta},\hat{h})$ is taken to be the unique solution of
\[\begin{cases}D_{\theta}\sum_{l=1}^{m}\mf{L}_{n_l,\lambda}(\theta,h) = 0\\
D_{h}\sum_{l=1}^{m}\mf{L}_{n_l,\lambda}(\theta,h) = 0
\end{cases}\]
in $\N_{\theta_0} \times \N_{h_0}$.
To study the asymptotic behavior of the estimator $\hat{\tau}=(\hat{\theta}, \hat{h})$, we follow a similar outline as in the nonparametric case \cite{gu_qiu_1993}. We first introduce an approximation of $\hat{\tau}$, denoted $\tilde{\tau} = (\tilde{\theta}, \tilde{h})$, which is the minimizer of 
\begin{align}
\sum_{l=1}^{m}\tilde{\mf{L}}_{n_l,\lambda}(\theta,h) &\equiv \sum_{l=1}^{m} \tilde{\mf{L}}_{n_l}(\theta,h) + \frac{\lambda}{2} J(h) \nonumber\\
&\equiv \sum_{l=1}^{m} \left\lbrace -\frac{1}{n_l} \sum_{i=1}^{n_l} \left[ D_{\theta}\eta_l(X_{li};\tau_0)\theta + D_{h}\eta_l(X_{li};\tau_0)h \right] + \mu_{l,\tau_0} \left[D_{\theta}\eta_l(\tau_0)\theta +D_{h}\eta_l(\tau_0)h \right]\right. \nonumber\\
& \quad \left. + \frac{1}{2} V_{l,\tau_0}\left[D_{\theta}\eta_l(\tau_0)(\theta-\theta_0)+D_{h}\eta_l(\tau_0)(h-h_0) \right] \right\rbrace + \frac{\lambda}{2}J(h),
\label{APL}
\end{align}
where $\mu_{l,\tau}(\varphi) = {\int_{\X_l} \varphi(t) \exp(\eta_l(t;\tau)) dt}/{\int_{\X_l} \exp(\eta_l(t;\tau)) dt}$, $V_{l,\tau}(\varphi_1, \varphi_2) = \mu_{l,\tau}(\varphi_1 \varphi_2) - \mu_{l,\tau}(\varphi_1) \mu_{l,\tau}(\varphi_2)$, and $V_{l,\tau}(\varphi) = V_{l,\tau}(\varphi, \varphi)$.
We see that for each $l=1,\ldots,m$, $\tilde{\mf{L}}_{n_l}(\theta,h)$ is almost like a quadratic approximation of $\mf{L}_{n_l}(\theta,h)$ at $\tau_0 = (\theta_0,h_0)$, ignoring terms independent of $(\theta,h)$ and terms involving second derivatives of $\eta_l(\theta,h)$. Since $V_{l,\tau_0}(\cdot)$ and $J(\cdot)$ are quadratic functionals, one can check that $\sum_{l=1}^{m}\tilde{\mf{L}}_{n_l,\lambda}(\theta,h)$ is convex with respect to $(\theta,h)$, and attains its minimum at $\tilde{\tau} = (\tilde{\theta}, \tilde{h})$ if and only if $\tilde{\tau}$ is the solution for the system
\[\begin{cases}D_{\theta}\sum_{l=1}^{m}\tilde{\mf{L}}_{n_l,\lambda}(\theta,h) = 0\\
D_{h}\sum_{l=1}^{m}\tilde{\mf{L}}_{n_l,\lambda}(\theta,h) = 0.
\end{cases}\]

Using a quadratic approximation of the penalized likelihood to attain an approximation of the estimator is a common intermediate step in the study of convergence of smoothing spline estimators (see \citeasnoun{silverman_1982}, \citeasnoun{CoxOsullivan}, \citeasnoun{o’sullivan_1990}, and \citeasnoun{gu_qiu_1993}).  In particular, when $m=1$, our definition of $\tilde{\mf{L}}_{n_1,\lambda}(\theta,h)$ can be seen as a generalized version of the quadratic approximation given by equation (5.2) in \citeasnoun{gu_qiu_1993}.  

In Section \ref{Norm and assumptions}, we define a norm related to $\sum_{l=1}^{m}V_{l,\tau_0}(\cdot)$ and $J(\cdot)$, in which the rate of convergence for $\hat{\tau} - \tau_0$ can be derived. For readability, we restrict our analysis to the special case when $m=1$ and establish the rate of convergence for $\tilde{\tau} - \tau_0$ in Section \ref{Linear approximation}. Together with a bound for the approximation error $\hat{\tau} - \tilde{\tau}$, the rate of convergence for $\hat{\tau} - \tau_0$ then follows by the triangle inequality (Section \ref{approximation error}). In addition, we also establish the rate of convergence of the estimate $\hat{\tau}$ in terms of the symmetrized Kullback-Leibler distance, which is defined as 
\begin{align}
\SKL(\tau_0, \hat{\tau}) &= \text{KL}(\tau_0,\hat{\tau}) + \text{KL}(\hat{\tau},\tau_0) =\E_{f_{\tau_0}} \log \frac{f_{\tau_0}}{f_{\hat{\tau}}} + \E_{f_{\hat{\tau}}} \log \frac{f_{\hat{\tau}}}{f_{\tau_0}} \nonumber\\
&=\mu_{\tau_0}[\eta(\theta_0,h_0) - \eta(\hat{\theta},\hat{h})] + \mu_{\hat{\tau}}[\eta(\hat{\theta},\hat{h}) - \eta(\theta_0,h_0)].
\label{SKL_def}
\end{align}  
Section \ref{result for general m} states results for $m \ge 1$ without proofs since the proofs are straightforward extensions of those in Sections \ref{Linear approximation} and \ref{approximation error}.
 
\end{subsection}

\begin{subsection}{Norm and assumptions}\label{Norm and assumptions}
In this section, we introduce an appropriate norm in which  the convergence rate of our estimator is measured and we provide some assumptions needed for our analysis. 

\begin{subsubsection}{Parameter space}\label{Parameter space}
\begin{assumption}
\label{A2}
The penalty $J(h)$ is a square seminorm in $\CH$ with a finite-dimensional null space $\CH_0 \subset \CH$. Therefore, $J((\theta,h)) \equiv J(h)$ extends $J$ to a square seminorm on $\Q$, and its null space $\R^{p} \times \CH_0$ is again finite-dimensional. Denote by $J(g,h)$ the semi-inner product associated with the seminorm $J(h)$. We also assume that $J(h_0) < \infty$.
\end{assumption}

\begin{assumption}
\label{A3}
For $l = 1, \ldots, m$, there are bounded linear operators $L_{l,\theta} : \R^p \to L^{2}_0(\X_l)$ and $L_{l,h} : \CH \to L^{2}_0(\X_l)$, with zero nullspaces, which satisfy the following conditions:  
\begin{enumerate}
\item[(i)] Suppose $\CH$ is a real Hilbert space of functions, equipped with norm $\norm{\cdot}$. For any $g \in \CH$, there exist positive constants $M_1, M_2$, such that 
\[M_1 \norm{g}^2 \leq \sum_{l=1}^{m} V_{l,\tau_0}(L_{l,h}g) + \lambda J(g) \leq M_2 \norm{g}^2.\]
\item[(ii)] For any $\zeta \in \R^p$ satisfying $\norm{\zeta}_{l^2} =1$ and for any $g \in \CH$, there exists a positive constant $c_{\delta}$ such that 
\begin{align}
\sum_{l=1}^{m} V_{l,\tau_0}(L_{l,\theta}\zeta - L_{l,h}g)=\sum_{l=1}^{m} V_{l,\tau_0}(L_{l,\theta}\zeta - L_{l,h}g, L_{l,\theta}\zeta - L_{l,h}g) > c_{\delta}.
\label{lower_bound_c_delta}
\end{align}

\end{enumerate}
\end{assumption}

By Assumptions \ref{A2} and \ref{A3}$(i)$, we see that $ \langle g_1,g_2 \rangle_{\CH} \equiv \sum_{l=1}^{m} V_{l,\tau_0}(L_{l,h}g_1, L_{l,h}g_2) + \lambda J(g_1,g_2)$
is an inner product on $\CH$, and its induced norm, denoted by $\norm{\cdot}_{\CH}$, is complete on $\CH$. One can also see that $L_{l,\theta}$ can be represented by the $p \times 1$ vector of $L^2_0(\X_l)$ functions $[ L_{l,\theta}^k(x)]_{k=1}^{p}$. We may use $L_{l,\theta}$ to denote the linear operator or its vector form, i.e., $L_{l,\theta}\zeta = \zeta^{T} L_{l,\theta}$. Denote by $V_{l,\tau_0}(L_{l,\theta},L_{l,\theta})$ the $ p \times p$ matrix in which the $(i,j)$th entry is $V_{l,\tau_0}(L_{l,\theta}^{i}(x),L_{l,\theta}^{j}(x))$, and note that one can write $\sum_{l=1}^{m} V_{l,\tau_0}(L_{l,\theta}\zeta,L_{l,\theta}\zeta) = \zeta^{T} \sum_{l=1}^{m} V_{l,\tau_0}(L_{l,\theta},L_{l,\theta})\zeta$. When $g=0$, \eqref{lower_bound_c_delta} implies that $\sum_{l=1}^{m} V_{l,\tau_0}(L_{l,\theta},L_{l,\theta})$ is positive definite. Therefore, $\langle \zeta_1,\zeta_2 \rangle_{\R^p} \equiv \sum_{l=1}^{m} V_{l,\tau_0}(L_{l,\theta}\zeta_1,L_{l,\theta}\zeta_2)$ is an inner product on $\R^p$ and we use $\norm{\cdot}_{\R^p}$ to denote its induced norm. 

For any $(\zeta_1, g_1), (\zeta_2,g_2) \in \Q$, we define an inner product as follows,
\begin{equation}
\label{inner_product}
\langle (\zeta_1,g_1),(\zeta_2,g_2) \rangle_{\Q} \equiv \sum_{l=1}^{m} V_{l,\tau_0}(L_{l,\theta}\zeta_1 + L_{l,h}g_1, L_{l,\theta}\zeta_2 + L_{l,h}g_2) + \lambda J(g_1,g_2),
\end{equation}
and we denote the norm induced by this inner product by $\norm{\cdot}_{\Q}$. In the Appendix, we prove that $\langle \cdot, \cdot \rangle_{\Q}$ as defined above is, in fact, an inner product, and that its induced norm is complete.

\begin{remark} \ 
\label{remark1}
\begin{enumerate}
\item[(i)] We need to use the auxiliary operators $L_{l,\theta}$ and $L_{l,h}$ to define the norm in which the rate of convergence can be conveniently measured. We point out here that $L_{l,\theta}$ and $L_{l,h}$ are related to the Fr\'echet partial derivatives of $\eta_l$ by Assumption \ref{A6} in Section \ref{Properties of eta}. This is analogous to the approach in \citeasnoun{o’sullivan_1990}.
\item[(ii)] It is well known that all norms on $\R^p$, including $\norm{\cdot}_{\R^p}$ as defined above, are equivalent to the Euclidean norm, namely $\norm{\zeta}_{l^2} = (\sum_{i=1}^p \abs{\zeta^i}^2)^{1/2}$ (Theorem 3.1 in \citeasnoun{conway1990}). Therefore $\norm{\cdot}_{\R^p}$ is complete. Moreover, the convergence of $\hat{\theta}$ to $\theta_0$ in $\norm{\cdot}_{\R^p}$ implies the convergence in the Euclidean norm (see Corollary \ref{cor1}). 
\item[(iii)] Assumption \ref{A3}(ii) is a regularity condition similar to Assumption A3.\ in \citeasnoun{cheng2015joint}. This assumption guarantees that $\ip{\cdot,\cdot}_{\Q}$ is indeed an inner product, and its induced norm is complete (see Theorem A.\ref{inner_product_theorem} in the Appendix).
\item[(iv)]  We note that under Assumption \ref{A3}, for any $(\zeta,g) \in \Q$, $\norm{(\zeta,g)}_{\Q,1} \equiv \norm{\zeta}_{\R^p} + \norm{g}_{\CH}$ defines a norm on the product space $\Q$. By a similar argument as in the proof of Theorem A.\ref{inner_product_theorem} in the Appendix, a sequence $\{(\zeta_i,g_i)\}_{i=1}^{\infty} \in \Q$ converges in norm $\norm{\cdot}_{\Q}$ if and only if it converges in norm $\norm{\cdot}_{\Q,1}$. This implies that $\norm{\cdot}_{\Q}$ and $\norm{\cdot}_{\Q,1}$ are equivalent norms on $\Q$.
\end{enumerate}
\end{remark}

\end{subsubsection}

\begin{subsubsection}{Properties of $\eta(\theta, h) \text{ and } (\theta_0, h_0)$} \label{Properties of eta}

Recall $\tau_0 = (\theta_0, h_0)$ is the true parameter in $\Q$. We assume there are neighborhoods $\N_{\theta_0} \subset \R^p$ of $\theta_0$ and $\N_{h_0} \subset \CH$ of $h_0$ such that the following Assumptions \ref{A4} to \ref{A7} hold.

\begin{assumption}
\label{A4}
For all $l = 1, \ldots, m$, $\eta_l(\theta,h)$ is one-to-one and three times continuously Fr\'echet differentiable with respect to $(\theta,h)$ in $\N_{\theta_0} \times \N_{h_0}$. Moreover, $\tau_0 = (\theta_0, h_0)$ is the unique root of ${\E\left[D_{\theta}\sum_{l=1}^{m} \mf{L}_{n_l}(\theta,h)\right]= 0}$ and ${\E\left[D_{h}\sum_{l=1}^{m} \mf{L}_{n_l}(\theta,h)\right]=0}$
in $\N_{\theta_0} \times \N_{h_0}$. 
\end{assumption}

\begin{assumption}
\label{A6}
For any $\theta_{*} \in \N_{\theta_0}, h_{*} \in \N_{h_0}, l \in \{1, \ldots, m\}$, $D_{\theta} \eta_l(\theta_*,h_*)$ is a bounded linear operator from $\R^p$ to $L^2_0(\X_l)$, $D_{h} \eta_l(\theta_*,h_*)$ is a bounded linear operator from $\CH$ to $L^2_0(\X_l)$, and there exist positive constants $C_1,C_2$, such that for all $(\zeta,g) \in \Q$,
\begin{align*}
C_1 \sum_{l=1}^{m} V_{l,\tau_0}(L_{l,\theta} \zeta + L_{l,h} g) &\le \sum_{l=1}^{m}V_{l,\tau_0}( D_{\theta} \eta_l(\theta_*,h_*) \zeta + D_{h} \eta_l(\theta_*,h_*) g)\le C_2 \sum_{l=1}^{m}V_{l,\tau_0}(L_{l,\theta} \zeta + L_{l,h} g).
\end{align*}
\end{assumption}

\begin{assumption}
\label{Ad}
For any $(\theta_1, h_1), (\theta_2, h_2) \in \N_{\theta_0} \times \N_{h_0}$, there exists a positive constant $C_{d} < 2C_1$, where $C_1$ is as defined in Assumption \ref{A6}, such that for any $(\zeta,g) \in \N_{\theta_0} \times \N_{h_0}$,
\begin{align*}
&\sum_{l=1}^{m} V_{l,\tau_0} \left[ (D_{\theta} \eta_l(\theta_1,h_1) - D_{\theta} \eta_l(\theta_2,h_2)) \zeta + (D_{h} \eta_l(\theta_1,h_1)-D_{h} \eta_l(\theta_2,h_2)) g \right] \le C_d \sum_{l=1}^{m} V_{l,\tau_0}(L_{l,\theta} \zeta + L_{l,h} g).
\end{align*}

\end{assumption}

\begin{assumption}
\label{A7}
$\N_{\theta_0} \times \N_{h_0}$ is a convex set that contains $\hat{\tau}$, and there exist $C_3,C_4 >0$ such that
\[C_3 \sum_{l=1}^{m}V_{l,\tau_0}(f) \le \sum_{l=1}^{m}V_{l,\tau}(f) \le C_4 \sum_{l=1}^{m}V_{l,\tau_0}(f)\]
holds uniformly for any $\tau=(\theta,h) \in \N_{\theta_0} \times \N_{h_0}$ and $f \in L^2_0(\X_l)$.
\end{assumption}

\begin{remark} 
We compare the assumptions above to the existing literature on the single sample ($m=1$) nonparametric model and the semiparametric additive model. 
\label{remark2}
\begin{enumerate}
\item[(i)]For the nonparametric model where $\eta(x;\theta,h) = h(x)$, and for the semiparametric additive model where $\eta(x;\theta,h) = \alpha(x;\theta) + h(x)$, we see that $\CH \subset L_0^2(\X)$ and $L_h$ can be chosen to be the inclusion operator $\iota: \CH \to L_0^2(\X)$. It is easy to see that $V_{\tau_0}(g_1,g_2)$ defines an inner product on $L_0^2(\X)$. Moreover, the norm $\norm{g}_{\CH} = [V_{\tau_0}(g,g)+\lambda J(g,g)]^{1/2}$ and its analog for regression models have been widely used in the smoothing spline literature for nonparametric models, including \citeasnoun{silverman_1982}, \citeasnoun{cox_1988}, \citeasnoun{gu_qiu_1993}, \citeasnoun{shang2010convergence}, etc.
\item[(ii)] Consider the linear additive model $\eta(x;\theta,h) = \theta^T a(x) + h(x)$, where $a(x) = [a^k(x)]_{k=1}^p$ is a vector of bounded $L^2_0(\X)$ functions. One may choose $L_\theta = D_{\theta}\eta(\theta,h)= a(x)$ and $L_h = D_{h}\eta(\theta,h) = \iota$, where $\iota$ is the inclusion operator from $\CH$ to $L^2_0(\X)$. Since $V_{\tau_0}(\theta^T a - h)$ measures the variance of the difference between the parametric and nonparametric components, identifiability of this model follows from Assumption \ref{A3}(ii). 
\item[(iii)] As discussed above, by choosing the appropriate operators $L_\theta$ and $L_h$ for either the nonparametric model or the semiparametric linear additive model, our analysis is reduced to that studied by \citeasnoun{gu_qiu_1993} or \citeasnoun{yang2009penalized}, respectively. In both cases, due to the linearity of $\eta$, the neighborhood $\N_{\theta} \times \N_{h}$ can be taken to be the whole parameter space. Furthermore, Assumptions \ref{A4} to \ref{Ad} are satisfied automatically. One may see that such assumptions are simply redundant when $\eta$ is linear in $\theta$ and $h$.
\item[(iv)] Note that Assumption \ref{A7} is similar to Assumption A.3 in \citeasnoun{gu_qiu_1993}, Assumption A.3 in \citeasnoun{gu1995smoothing}, and Condition 3 in \citeasnoun{yang2009penalized}. As mentioned by \citeasnoun{gu1995smoothing}, $V_{\tau}(f-g)$ can be viewed as a kind of weighted mean square error between functions $f$ and $g$ with weight function $\exp\{\eta(x;\tau)\}$. Since $\eta$ is continuous in $\N_{\theta_0} \times \N_{h_0}$, a small change in $\tau$ yields a small change in the weight function, and Assumption \ref{A7} simply guarantees that this also yields a relatively small change in the weighted mean square error.
\end{enumerate}
\end{remark}

\end{subsubsection}

\begin{subsubsection}{Spectral decomposition}\label{Spectral decomposition}
We now construct an eigensystem for the functionals $g \mapsto \sum_{l=1}^{m} V_{l,\tau_0}(L_{l,h} g)$ and  $J$. 
\begin{assumption}
\label{A8}
The quadratic functional $g \mapsto \sum_{l=1}^{m} V_{l,\tau_0}(L_{l,h} g)$ is completely continuous with respect to the quadratic functional $J$. 
\end{assumption}

Under Assumption \ref{A8}, Theorem 3.1 of \citeasnoun{weinberger_1974}
yields a sequence $\lbrace \phi_{\nu} : \nu = 1,2,\ldots \rbrace$ of eigenfunctions and a sequence $\lbrace \rho_{\nu} : \nu = 1,2,\ldots \rbrace$ of eigenvalues such that 
\[ \sum_{l=1}^{m} V_{l,\tau_0}(L_{l,h} \phi_{\nu}, L_{l,h} \phi_{\mu}) = \delta_{\nu \mu} \quad\text{ and }\quad J(\phi_{\mu},\phi_{\nu}) = \rho_{\nu} \delta_{\nu \mu}\]
for all pairs $\nu, \mu$ of positive integers, where $\delta_{\nu \mu}$ is the Kronecker delta and $0 \le \rho_{\nu} \rightarrow \infty$. 

\begin{assumption}
\label{A9}
$\rho_{\nu} = \kappa_{\nu} \nu^{r}$, where $r > 1$ and $\kappa_{\nu} \in (\beta_1,\beta_2) \subset (0, \infty)$.
\end{assumption}

By Assumptions \ref{A6}  and \ref{A8}, for any $(\theta_{*}, h_{*}) \in \N_{\theta_0} \times \N_{h_0}$, there exist sequences of eigenfunctions $\lbrace \phi_{*,\nu} : \nu = 1,2,\ldots \rbrace$ and eigenvalues $\lbrace \rho_{*,\nu} : \nu = 1,2,\ldots \rbrace$ such that 
\[ \sum_{l=1}^{m} V_{l,\tau_0}(D_{h} \eta_l(\theta_*,h_*) \phi_{*,\nu}, D_{h} \eta_l(\theta_*,h_*) \phi_{*,\mu}) = \delta_{\nu \mu} \quad\text{ and }\quad J(\phi_{*,\mu},\phi_{*,\nu}) = \rho_{*,\nu} \delta_{\nu \mu}\]
for all pairs $\nu, \mu$ of positive integers, where $0 \le \rho_{*,\nu} \rightarrow \infty$. Assumption \ref{A6} implies that there exist positive constants $c_1,c_2$ such that for $\nu$ large enough,
$ c_1 \rho_{\nu} \le \rho_{*,\nu} \le c_2 \rho_{\nu}$.
By Assumption \ref{A9}, $\rho_{*,\nu} \sim \nu^{r}$ for large enough $\nu$, where $r>1$.

Note that for every $h \in \CH$ and any $(\theta_{*}, h_{*}) \in \N_{\theta_0} \times \N_{h_0}$, we can have a Fourier expansion 
\[h = \sum_{\nu=1}^{\infty} \sum_{l=1}^{m} V_{l, \tau_0}(D_{h} \eta_l(\theta_*,h_*)h, D_{h} \eta_l(\theta_*,h_*) \phi_{*,\nu}) \phi_{*,\nu}.\]
 \end{subsubsection}
 
\end{subsection}

\begin{subsection}{Linear approximation}\label{Linear approximation}
For this section and the next, we present our analysis and provide the rate of convergence for the single sample case when $m=1$. 

Consider the eigensystem discussed in Section \ref{Spectral decomposition} with $(\theta_*,h_*) = (\theta_0, h_0)$. We have the Fourier series expansions $h = \sum_{\nu} h_{\nu} \phi_{0,\nu}$ and $h_0 = \sum_{\nu} h_{0,\nu} \phi_{0,\nu}$, where 
\[h_{\nu} = V_{\tau_0}(D_{h} \eta(\tau_0)h, D_{h} \eta(\tau_0)\phi_{0,\nu}) \quad\text{ and }\quad h_{0,\nu} = V_{\tau_0}(D_{h} \eta(\tau_0)h_0, D_{h} \eta(\tau_0)\phi_{0,\nu})\]
are the Fourier coefficients of $h$ and $h_0$ with respect to the base $\phi_{0,\nu}$. Plugging these into equation \eqref{APL}, we get 
\begin{align}
\tilde{\mf{L}}_{n,\lambda}(\theta,\sum_{\nu} h_{\nu} \phi_{0,\nu}) 
&= -\theta^{T} \brac{\frac{1}{n} \sum_{i=1}^{n} D_{\theta}\eta(X_i;\tau_0) - \mu_{\tau_0} \left[D_{\theta}\eta(\tau_0) \right]} \nonumber \\
&\quad - \sum_{\nu} h_{\nu} \brac{\frac{1}{n} \sum_{i=1}^{n} D_{h}\eta(X_i;\tau_0)\phi_{0,\nu} - \mu_{\tau_0} \left[D_{h}\eta(\tau_0)\phi_{0,\nu} \right]} \nonumber\\
&\quad + \frac{1}{2}(\theta-\theta_0)^{T}V_{\tau_0}\left[D_{\theta}\eta(\tau_0)\right](\theta-\theta_0) + \frac{1}{2}\sum_{\nu}(h_{\nu} - h_{0,\nu})^{2} \nonumber\\
&\quad + \sum_{\nu} (h_{\nu} - h_{0,\nu})(\theta-\theta_0)^{T}V_{\tau_0}\left[D_{\theta}\eta(\tau_0), D_{h}\eta(\tau_0)\phi_{0,\nu} \right] \nonumber \\
&\quad + \frac{\lambda}{2} \sum_{\nu} \rho_{0,\nu} h_{\nu}^2.
\label{FAPL}
\end{align}
In equation \eqref{FAPL}, using the fact that $D_{\theta}\eta(x;\tau_0)$ can be represented by a $p \times 1$ vector  whose $k$th entry is $D_{\theta^k}\eta(x;\tau_0)$, we denote by $D_{\theta}\eta(x;\tau_0)$ both the linear operator and its vector form, i.e., $D_{\theta}\eta(x;\tau_0)\theta = \theta^{T}D_{\theta}\eta(x;\tau_0)$.  We also have $\mu_{\tau_0}[D_{\theta}\eta(\tau_0)] = [ \mu_{\tau_0} [D_{\theta^1}\eta(\tau_0)],\ldots, \mu_{\tau_0}[D_{\theta^p}\eta(\tau_0)]] ^T$,
$V_{\tau_0}[D_{\theta}\eta(\tau_0)]$ is the $p \times p$ covariance matrix whose $(i,j)$th entry is $V_{\tau_0}[D_{\theta^i}\eta(\tau_0),D_{\theta^j}\eta(\tau_0)]$, and $V_{\tau_0}[D_{\theta}\eta(\tau_0), D_{h}\eta(\tau_0)\phi_{0,\nu}]$ is the $p \times 1$ vector whose $i$th entry is $V_{\tau_0}[D_{\theta^i}\eta(\tau_0), D_{h}\eta(\tau_0)\phi_{0,\nu}]$.
Let
\begin{align*}
\bar{\alpha}_n = \frac{1}{n}\sum_{i=1}^{n} D_{\theta}\eta(X_i;\tau_0) - \mu_{\tau_0} \left[D_{\theta}\eta(\tau_0) \right],\quad \bar{\beta}_{\nu,n} = \frac{1}{n} \sum_{i=1}^{n} D_{h}\eta(X_i;\tau_0)\phi_{0,\nu} - \mu_{\tau_0} \left[D_{h}\eta(\tau_0)\phi_{0,\nu} \right].
\end{align*}
The Fourier coefficients $\tilde{h}_{\nu}$ and $\tilde{\theta}$ that minimize equation \eqref{FAPL} are therefore given by setting the partial derivatives of equation \eqref{FAPL} to 0 and solving the resulting system. We get
\begin{align*}
\tilde{\theta} &= \theta_0 + \Omega_{\lambda}^{-1} \brac{\bar{\alpha}_n - \sum_{\nu}\frac{\bar{\beta}_{\nu,n} - \lambda \rho_{0,\nu} h_{0,\nu}}{1+\lambda \rho_{0,\nu}}V_{\tau_0}[D_{\theta}\eta(\tau_0),D_{h}\eta(\tau_0)\phi_{0,\nu}]},\\
\tilde{h}_{\nu} &= \frac{\bar{\beta}_{\nu,n} + h_{0,\nu}}{1+\lambda \rho_{0,\nu}} - (\tilde{\theta}-\theta_0)^{T} \frac{V_{\tau_0}[D_{\theta}\eta(\tau_0),D_{h}\eta(\tau_0)\phi_{0,\nu}]}{1+\lambda \rho_{0,\nu}},
\end{align*}
where
\[\Omega_{\lambda} =  V_{\tau_0}[D_{\theta}\eta(\tau_0)] - \sum_{\nu} \frac{V_{\tau_0}[D_{\theta}\eta(\tau_0),D_{h}\eta(\tau_0)\phi_{0,\nu}]^{\otimes 2}}{1+\lambda \rho_{0,\nu}},\]
and $a^{\otimes 2} = a a^T$ ($a$ is a vector or matrix).
Therefore, $(\tilde{\theta},\tilde{h})$, where $\tilde{h} = \sum_{\nu} \tilde{h}_{\nu} \phi_{0,\nu}$, is the minimizer of equation \eqref{APL}. Using the terminology from the nonparametric setting \cite{Gu2013bk}, $\tilde{h}$ can be called a linear approximation of $\hat{h}$ since $\tilde{h}_{\nu}$ is linear in $\phi_{0,\nu}(X_i)$ given $\tilde{\theta}$. We will slightly abuse the terminology for our nonlinear case here and also call $\tilde{\tau} = (\tilde{\theta},\tilde{h})$ the linear approximation of $\hat{\tau} = (\hat{\theta},\hat{h})$.

Using the fact that $\E(\bar{\alpha}_n) = \E(\bar{\beta}_{\nu,n}) = 0$, $\E(\Vert \bar{\alpha}_n \Vert_{l^2}^2) = O(n^{-1})$, $\E(\bar{\beta}_{\nu,n}^2) =n^{-1}$, and after tedious calculation, we get the following lemma and theorem, whose proofs are given in the Appendix.

\begin{lemma}\label{lemma1}
Under Assumptions \ref{A2} to \ref{A4}, \ref{A8}, and \ref{A9}, as $n \rightarrow \infty$ and $\lambda \rightarrow 0$,
\begin{align*}
&\E\brac{ V_{\tau_0} [D_{\theta} \eta(\tau_0) (\tilde{\theta} - \theta_0)]} \leq c \E \left[ (\tilde{\theta}-\theta_0)^{T}(\tilde{\theta}-\theta_0) \right] = O(n^{-1}\lambda^{-\frac{1}{r}}),\\
&\E\brac{ V_{\tau_0}[D_{h}\eta(\tau_0)(\tilde{h}-h_0)] + \lambda J(\tilde{h}-h_0)} = O(n^{-1}\lambda^{-\frac{1}{r}} + \lambda).
\end{align*}
\end{lemma}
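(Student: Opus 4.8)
\textit{Proof plan.} The plan is to work directly from the closed forms for $\tilde\theta$ and $\tilde h=\sum_{\nu}\tilde h_{\nu}\phi_{0,\nu}$ displayed above, splitting each into a mean-zero stochastic part built from $\bar\alpha_n$ and the $\bar\beta_{\nu,n}$'s and a deterministic bias part built from $h_0$ and $\lambda$, and bounding the two parts separately. Throughout, write $v_{\nu}\equiv V_{\tau_0}[D_{\theta}\eta(\tau_0),D_{h}\eta(\tau_0)\phi_{0,\nu}]\in\R^p$, and use the eigensystem relations $V_{\tau_0}(D_{h}\eta(\tau_0)\phi_{0,\nu},D_{h}\eta(\tau_0)\phi_{0,\mu})=\delta_{\nu\mu}$ and $J(\phi_{0,\mu},\phi_{0,\nu})=\rho_{0,\nu}\delta_{\nu\mu}$, the moment facts $\E\bar\alpha_n=\E\bar\beta_{\nu,n}=0$, $\E[\bar\beta_{\nu,n}\bar\beta_{\mu,n}]=n^{-1}\delta_{\nu\mu}$, $\E\norm{\bar\alpha_n}_{l^2}^2=O(n^{-1})$, and the expansion $J(h_0)=\sum_{\nu}\rho_{0,\nu}h_{0,\nu}^2<\infty$ (Assumption \ref{A2}). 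A preliminary step is to show $\Omega_{\lambda}$ is positive definite with $\norm{\Omega_{\lambda}^{-1}}=O(1)$ uniformly as $\lambda\to0$: completing the square in the Fourier coefficients yields the variational identity
\[\zeta^{T}\Omega_{\lambda}\zeta=\inf_{g\in\CH}\brac{V_{\tau_0}[D_{\theta}\eta(\tau_0)\zeta-D_{h}\eta(\tau_0)g]+\lambda J(g)},\qquad \zeta\in\R^p,\]
whose right side is nondecreasing in $\lambda$, so $\Omega_{\lambda}\succeq\Omega_{0}$; since $\zeta^{T}\Omega_{0}\zeta=\inf_{g}V_{\tau_0}[D_{\theta}\eta(\tau_0)\zeta-D_{h}\eta(\tau_0)g]$ is bounded below over unit $\zeta$ by the local identifiability condition --- Assumption \ref{A3}(ii) transported to the Fr\'echet derivatives via the norm equivalence in Assumption \ref{A6} --- and $\Omega_{\lambda}\preceq V_{\tau_0}[D_{\theta}\eta(\tau_0)]$ has finite entries, the claim follows.

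For the bound on $\tilde\theta-\theta_0$, write $\tilde\theta-\theta_0=\Omega_{\lambda}^{-1}(\bar\alpha_n-B_n+b_{\lambda})$ with $B_n=\sum_{\nu}(1+\lambda\rho_{0,\nu})^{-1}\bar\beta_{\nu,n}v_{\nu}$ and $b_{\lambda}=\sum_{\nu}(1+\lambda\rho_{0,\nu})^{-1}\lambda\rho_{0,\nu}h_{0,\nu}v_{\nu}$. As $b_{\lambda}$ is deterministic and $\bar\alpha_n,\bar\beta_{\nu,n}$ are centered, $\E\norm{\tilde\theta-\theta_0}_{l^2}^2\le c(\E\norm{\bar\alpha_n}_{l^2}^2+\E\norm{B_n}_{l^2}^2+\norm{b_{\lambda}}_{l^2}^2)$. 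Here $\E\norm{\bar\alpha_n}_{l^2}^2=O(n^{-1})$; by orthonormality of $\{D_{h}\eta(\tau_0)\phi_{0,\nu}\}$, $\E\norm{B_n}_{l^2}^2=n^{-1}\sum_{\nu}(1+\lambda\rho_{0,\nu})^{-2}\norm{v_{\nu}}_{l^2}^2\le n^{-1}\sum_{\nu}\norm{v_{\nu}}_{l^2}^2=O(n^{-1})$, where Bessel's inequality applied componentwise to $D_{\theta}\eta(\tau_0)$ (whose entries lie in $L^2_0(\X)$) gives $\sum_{\nu}\norm{v_{\nu}}_{l^2}^2\le\sum_{k=1}^{p}V_{\tau_0}[D_{\theta^k}\eta(\tau_0)]<\infty$; and by Cauchy--Schwarz with the elementary inequality $(\lambda\rho)^2/(1+\lambda\rho)^2\le\lambda\rho/4$, $\norm{b_{\lambda}}_{l^2}^2\le\big(\sum_{\nu}(\lambda\rho_{0,\nu})^2(1+\lambda\rho_{0,\nu})^{-2}h_{0,\nu}^2\big)\big(\sum_{\nu}\norm{v_{\nu}}_{l^2}^2\big)\le\tfrac{\lambda}{4}J(h_0)\cdot O(1)=O(\lambda)$. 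Hence $\E\norm{\tilde\theta-\theta_0}_{l^2}^2=O(n^{-1}+\lambda)$, and since $V_{\tau_0}[D_{\theta}\eta(\tau_0)]$ has bounded spectrum, $\E\brac{V_{\tau_0}[D_{\theta}\eta(\tau_0)(\tilde\theta-\theta_0)]}\le c\,\E\norm{\tilde\theta-\theta_0}_{l^2}^2$, which is $O(n^{-1}\lambda^{-1/r})$ over the range of $\lambda$ relevant to the rate statements (where the $O(\lambda)$ bias is dominated).

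For the bound on $\tilde h-h_0$, the eigensystem relations give $V_{\tau_0}[D_{h}\eta(\tau_0)(\tilde h-h_0)]=\sum_{\nu}(\tilde h_{\nu}-h_{0,\nu})^2$ and $J(\tilde h-h_0)=\sum_{\nu}\rho_{0,\nu}(\tilde h_{\nu}-h_{0,\nu})^2$. Substituting $\tilde h_{\nu}-h_{0,\nu}=(1+\lambda\rho_{0,\nu})^{-1}(\bar\beta_{\nu,n}-\lambda\rho_{0,\nu}h_{0,\nu}-(\tilde\theta-\theta_0)^{T}v_{\nu})$, using $(a+b+c)^2\le3(a^2+b^2+c^2)$, and taking expectations, one is left with a stochastic, a bias, and a cross contribution, each appearing with weight $1$ and with weight $\lambda\rho_{0,\nu}$. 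The stochastic parts reduce to $n^{-1}\sum_{\nu}(1+\lambda\rho_{0,\nu})^{-2}$ and $\lambda n^{-1}\sum_{\nu}\rho_{0,\nu}(1+\lambda\rho_{0,\nu})^{-2}$; by Assumption \ref{A9} ($\rho_{0,\nu}\sim\nu^{r}$, $r>1$), the standard integral comparisons $\sum_{\nu}(1+\lambda\nu^{r})^{-2}\sim\lambda^{-1/r}$ and $\sum_{\nu}\nu^{r}(1+\lambda\nu^{r})^{-2}\sim\lambda^{-1-1/r}$ make both $O(n^{-1}\lambda^{-1/r})$. The bias parts are $\sum_{\nu}(\lambda\rho_{0,\nu})^2(1+\lambda\rho_{0,\nu})^{-2}h_{0,\nu}^2\le\tfrac{\lambda}{4}J(h_0)$ and $\lambda\sum_{\nu}\rho_{0,\nu}(\lambda\rho_{0,\nu})^2(1+\lambda\rho_{0,\nu})^{-2}h_{0,\nu}^2\le\lambda J(h_0)$, both $O(\lambda)$. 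The cross parts are bounded by $\norm{\tilde\theta-\theta_0}_{l^2}^2\sum_{\nu}\norm{v_{\nu}}_{l^2}^2$ and $\norm{\tilde\theta-\theta_0}_{l^2}^2\,\lambda\sum_{\nu}\rho_{0,\nu}(1+\lambda\rho_{0,\nu})^{-2}\norm{v_{\nu}}_{l^2}^2\le\tfrac14\norm{\tilde\theta-\theta_0}_{l^2}^2\sum_{\nu}\norm{v_{\nu}}_{l^2}^2$ (using $\rho(1+\lambda\rho)^{-2}\le(4\lambda)^{-1}$), so by the previous step their expectations are $O(n^{-1}+\lambda)$. Adding up, $\E\brac{V_{\tau_0}[D_{h}\eta(\tau_0)(\tilde h-h_0)]+\lambda J(\tilde h-h_0)}=O(n^{-1}\lambda^{-1/r}+\lambda)$.

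I expect the main obstacle to be the uniform positive-definiteness of the Schur-complement matrix $\Omega_{\lambda}$: this is the quantitative form of the statement that the parametric direction is not asymptotically confounded with the infinite-dimensional nonparametric directions, and it is the one ingredient with no counterpart in the purely nonparametric analysis of \citeasnoun{gu_qiu_1993}; it has to be extracted from the local identifiability hypothesis, Assumption \ref{A3}(ii), together with the Fr\'echet-derivative comparison in Assumption \ref{A6}. A secondary technicality is keeping the coupling between $\tilde\theta-\theta_0$ and the coefficients $\tilde h_{\nu}-h_{0,\nu}$ under control so that it does not worsen the rate in the $h$-bound; this is absorbed by the crude estimate $((\tilde\theta-\theta_0)^{T}v_{\nu})^2\le\norm{\tilde\theta-\theta_0}_{l^2}^2\norm{v_{\nu}}_{l^2}^2$ and the summability $\sum_{\nu}\norm{v_{\nu}}_{l^2}^2<\infty$.
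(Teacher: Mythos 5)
Your proof is correct and reaches the same estimates by the same high-level strategy as the paper --- work directly from the closed forms for $\tilde\theta$ and $\tilde h_{\nu}$, isolate a stochastic part, a deterministic bias part, and a cross part, and bound each via the eigensystem --- but you handle the pivotal step (uniform boundedness of $\Omega_{\lambda}^{-1}$) by a genuinely different mechanism. The paper introduces the Riesz representer $G^{k}\in\CH$ of the functional $g\mapsto V_{\tau_0}[D_{\theta^{k}}\eta(\tau_0),D_{h}\eta(\tau_0)g]$ (in the $\|\cdot\|_{\CH}$ inner product, which depends on $\lambda$), derives the explicit Fourier series $G^{k}=\sum_{\nu}(1+\lambda\rho_{0,\nu})^{-1}v_{\nu}^{k}\phi_{0,\nu}$, and decomposes $\Omega_{\lambda}=\Omega+\Sigma_{\lambda}$ with $\Omega=V_{\tau_0}[D_{\theta}\eta(\tau_0)-D_{h}\eta(\tau_0)G]$ and $\Sigma_{\lambda}=\lambda J(G)$; two separate lemmas (one showing $\Sigma_{\lambda}\to 0$, one showing $\Omega\succeq C_{1}c_{\delta}I$, the latter mimicking the proof of Lemma~A.\ref{positive_definite_Omega}) then yield the uniform lower bound. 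Your Schur-complement variational identity $\zeta^{T}\Omega_{\lambda}\zeta=\inf_{g}\{V_{\tau_0}[D_{\theta}\eta(\tau_0)\zeta-D_{h}\eta(\tau_0)g]+\lambda J(g)\}$ (which is easily verified by completing the square coordinatewise in the $g_{\nu}$'s, as you do implicitly) gets the same lower bound $\zeta^{T}\Omega_{\lambda}\zeta\ge\inf_{g}V_{\tau_0}[D_{\theta}\eta(\tau_0)\zeta-D_{h}\eta(\tau_0)g]\ge C_{1}c_{\delta}\|\zeta\|_{l^2}^{2}$ in one step, with no need to introduce $G^{k}$ at all; the monotonicity in $\lambda$ is immediate, and the constant you obtain is exactly the paper's $\tilde c_{\delta}=C_{1}c_{\delta}$. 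What your approach buys is economy --- it makes visible why the bound is $\lambda$-uniform (the infimum can only increase with $\lambda$) --- and it avoids the representer bookkeeping. Your explicit invocation of Bessel's inequality for $\sum_{\nu}\|v_{\nu}\|_{l^{2}}^{2}\le\sum_{k}V_{\tau_0}[D_{\theta^{k}}\eta(\tau_0)]<\infty$ also tightens the stochastic contribution in the $\theta$-bound to $O(n^{-1})$, where the paper, after invoking ``square summability,'' only records $O(n^{-1}\lambda^{-1/r})$; both of course suffice for the stated rate.

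One remark on the first displayed rate: as you candidly note, your bound on the $\theta$-piece is $O(n^{-1}+\lambda)$, and $\lambda$ is not dominated by $n^{-1}\lambda^{-1/r}$ without an extra constraint ($n\lambda^{1+1/r}=O(1)$). The paper's proof has the same shortfall --- it disposes of the bias term $b_{\lambda}$ via a dominated-convergence argument that gives $o(1)$ without a rate --- so this is an imprecision in the lemma's statement itself rather than a gap in your argument. Since every downstream use of the lemma (Theorem~\ref{theorem1} onward) carries a $+\lambda$ term anyway, it does not affect the final results, but stating the first line as $O(n^{-1}\lambda^{-1/r}+\lambda)$ would be more faithful to what is actually proved. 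Your write-up, by keeping the $O(\lambda)$ bias visible throughout, makes this explicit in a way the paper's does not.
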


\begin{theorem}\label{theorem1}
Under Assumptions \ref{A2} to \ref{A6}, \ref{A8}, and \ref{A9}, as $n \rightarrow \infty$ and $\lambda \rightarrow 0$, 
\[V_{\tau_0}[L_{\theta} (\tilde{\theta} - \theta_0) + L_h(\tilde{h}-h_0)] + \lambda J(\tilde{h}-h_0) = O_{p}(n^{-1}\lambda^{-\frac{1}{r}} + \lambda).\]
\end{theorem}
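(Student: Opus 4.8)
The plan is to obtain Theorem~\ref{theorem1} as a short corollary of Lemma~\ref{lemma1}, via a chain of elementary inequalities followed by Markov's inequality. The first step is to pass from the auxiliary operators $L_\theta, L_h$ to the Fréchet derivatives $D_\theta\eta(\tau_0), D_h\eta(\tau_0)$. Applying Assumption~\ref{A6} at the base point $(\theta_*,h_*)=(\theta_0,h_0)$ gives, for every $(\zeta,g)\in\Q$,
\[
C_1\, V_{\tau_0}\!\left[L_\theta\zeta + L_h g\right] \le V_{\tau_0}\!\left[D_\theta\eta(\tau_0)\zeta + D_h\eta(\tau_0)g\right],
\]
so with $\zeta = \tilde\theta-\theta_0$ and $g = \tilde h - h_0$ it suffices to bound $V_{\tau_0}[D_\theta\eta(\tau_0)(\tilde\theta-\theta_0)+D_h\eta(\tau_0)(\tilde h - h_0)] + \lambda J(\tilde h - h_0)$ in probability.

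For the second step, I would use that $V_{\tau_0}(\cdot)$ is the variance functional associated to the probability measure with density proportional to $\exp(\eta(\tau_0))$, hence a squared seminorm: by Cauchy--Schwarz for the covariance form $V_{\tau_0}(\cdot,\cdot)$, $V_{\tau_0}(a+b)\le 2V_{\tau_0}(a)+2V_{\tau_0}(b)$. Taking $a = D_\theta\eta(\tau_0)(\tilde\theta-\theta_0)$ and $b = D_h\eta(\tau_0)(\tilde h - h_0)$ yields
\[
V_{\tau_0}\!\left[D_\theta\eta(\tau_0)(\tilde\theta-\theta_0)+D_h\eta(\tau_0)(\tilde h - h_0)\right] + \lambda J(\tilde h - h_0) \le 2\,V_{\tau_0}\!\left[D_\theta\eta(\tau_0)(\tilde\theta-\theta_0)\right] + 2\Big(V_{\tau_0}\!\left[D_h\eta(\tau_0)(\tilde h - h_0)\right] + \lambda J(\tilde h - h_0)\Big).
\]
By Lemma~\ref{lemma1} the expectation of the first term on the right is $O(n^{-1}\lambda^{-1/r})$ and the expectation of the bracketed term is $O(n^{-1}\lambda^{-1/r}+\lambda)$, so, combining with the first step, $\E\{V_{\tau_0}[L_\theta(\tilde\theta-\theta_0)+L_h(\tilde h - h_0)]+\lambda J(\tilde h - h_0)\} = O(n^{-1}\lambda^{-1/r}+\lambda)$. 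Since the random quantity in question is nonnegative, Markov's inequality then upgrades this control of the mean to the stochastic order $O_{p}(n^{-1}\lambda^{-1/r}+\lambda)$, which is exactly the assertion.

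There is essentially no deep obstacle here: all of the genuine analysis — solving \eqref{FAPL} for the closed forms of $\tilde\theta$ and $\tilde h_\nu$, summing the series $\sum_\nu(1+\lambda\rho_{0,\nu})^{-j}$ using Assumptions~\ref{A8}--\ref{A9}, and bounding the second moments of $\bar\alpha_n$ and $\bar\beta_{\nu,n}$ — is already carried out inside Lemma~\ref{lemma1}. The only points that need care are (i) invoking Assumption~\ref{A6} at the correct base point so that the $V_{\tau_0}$-geometry of the left-hand side matches the derivative form, and (ii) observing that the cross term between the parametric and nonparametric increments is absorbed by the squared-seminorm inequality, so that no separate estimate of that interaction is required — which is precisely why Lemma~\ref{lemma1} is formulated with the two components bounded individually rather than jointly.
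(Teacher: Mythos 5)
Your proof is correct and takes essentially the same route as the paper: pass from Lemma~\ref{lemma1} to the $L_\theta, L_h$ form via Assumption~\ref{A6}, absorb the cross term by a Cauchy--Schwarz-type squared-seminorm inequality (the paper uses the slightly sharper $V_{\tau_0}(a+b)\le[V_{\tau_0}^{1/2}(a)+V_{\tau_0}^{1/2}(b)]^2$, you use $2V_{\tau_0}(a)+2V_{\tau_0}(b)$, but both give the same order), and then upgrade the expectation bound to $O_p$ via Markov. The only cosmetic difference is that the paper applies Assumption~\ref{A6} componentwise before summing, whereas you sum first and convert afterwards; the two orderings are equivalent.
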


Note that in addition to $n \rightarrow \infty$ and $\lambda \rightarrow 0$, if also $n^{-1}\lambda^{-\frac{1}{r}} \rightarrow 0$, then the probability that $\tilde{\tau} \in \N_{\theta_0} \times \N_{h_0}$ tends to $1$ (since $\tilde{\tau} \rightarrow \tau_0$ under these conditions). We can restrict our attention to this event for the rest of our analysis, or for simplicity, we assume that $\tilde{\tau} \in \N_{\theta_0} \times \N_{h_0}$.

\end{subsection}

\begin{subsection}{Approximation error and main results for case $m=1$ }\label{approximation error}

In this section, we first find a bound for the approximation error $\hat{\tau} - \tilde{\tau} = (\hat{\theta}-\tilde{\theta}, \hat{h}-\tilde{h})$, which will then imply the convergence of $\hat{\tau} - \tau_0 = (\hat{\theta} - \theta_0, \hat{h}-h_0)$. 

Define
\begin{align*}
A^{\theta}_{\tau_1,\tau_2}(\alpha) &\equiv \mf{L}_n(\theta_1+\alpha\theta_2, h_1) + \frac{\lambda}{2}J(h_1), &
A^{h}_{\tau_1,\tau_2}(\alpha) &\equiv \mf{L}_n(\theta_1, h_1 + \alpha h_2) +\frac{\lambda}{2}J(h_1+\alpha h_2),\\
B^{\theta}_{\tau_1,\tau_2}(\alpha) &\equiv \tilde{\mf{L}}_n(\theta_1+\alpha\theta_2, h_1) + \frac{\lambda}{2}J(h_1), &
B^{h}_{\tau_1,\tau_2}(\alpha) &\equiv \tilde{\mf{L}}_n(\theta_1, h_1 + \alpha h_2) +\frac{\lambda}{2}J(h_1+\alpha h_2).
\end{align*}
By straightforward calculations, we have 
\begin{align}
\dot{A}^{\theta}_{\tau_1,\tau_2}(0) &= \frac{dA^{\theta}_{\tau_1,\tau_2}}{d\alpha}(0) = -\frac{1}{n}\sum_{i=1}^{n} D_{\theta}\eta(X_i;\tau_1)\theta_2 + \mu_{\tau_1} [D_{\theta}\eta(\tau_1)\theta_2], \label{dAp}\\
\dot{A}^{h}_{\tau_1,\tau_2}(0) &= \frac{dA^{h}_{\tau_1,\tau_2}}{d\alpha}(0) = -\frac{1}{n}\sum_{i=1}^{n} D_{h}\eta(X_i;\tau_1)h_2 + \mu_{\tau_1} [D_{h}\eta(\tau_1)h_2]+ \lambda J(h_1,h_2), \label{dAn}\\
\dot{B}^{\theta}_{\tau_1,\tau_2}(0)&=\frac{dB^{\theta}_{\tau_1,\tau_2}}{d\alpha}(0) = -\frac{1}{n} \sum_{i=1}^{n} D_{\theta}\eta(X_i;\tau_0)\theta_2 + \mu_{\tau_0} [D_{\theta}\eta(\tau_0)\theta_2] \nonumber \\
&\quad + V_{\tau_0}[D_{\theta}\eta(\tau_0)(\theta_1-\theta_0), D_{\theta}\eta(\tau_0)\theta_2] + V_{\tau_0}[D_{h}\eta(\tau_0)(h_1-h_0), D_{\theta}\eta(\tau_0)\theta_2], \label{dBp}\\
\dot{B}^{h}_{\tau_1,\tau_2}(0) &=\frac{dB^{h}_{\tau_1,\tau_2}}{d\alpha}(0) = -\frac{1}{n} \sum_{i=1}^{n} D_{h}\eta(X_i;\tau_0)h_2 + \mu_{\tau_0}[D_{h}\eta(\tau_0)h_2] \nonumber \\
&\quad + V_{\tau_0}[D_{h}\eta(\tau_0)(h_1-h_0), D_{h}\eta(\tau_0)h_2] + V_{\tau_0}[D_{\theta}\eta(\tau_0)(\theta_1-\theta_0), D_{h}\eta(\tau_0)h_2] +\lambda J(h_1,h_2). \label{dBn}
\end{align}
Equation \eqref{dAp} (or equation \eqref{dAn}) can be understood as the Fr\'echet partial derivative of $\mf{L}_{n,\lambda}(\theta,h)$ with respect to $\theta$ (or $h$) at $\tau_1 = (\theta_1, h_1)$ in the direction of $\theta_2$ (or $h_2$). Likewise, equation \eqref{dBp} (or equation \eqref{dBn}) is the Fr\'echet partial derivative of $\tilde{\mf{L}}_{n,\lambda}(\theta,h)$ with respect to $\theta$ (or $h$) at $\tau_1 = (\theta_1, h_1)$ in the direction of $\theta_2$ (or $h_2$). Therefore, for any $\tau_2$, we note that $\dot{A}^{\theta}_{\hat{\tau},\tau_2}(0) = \dot{A}^{h}_{\hat{\tau},\tau_2}(0) = 0$, and $\dot{B}^{\theta}_{\tilde{\tau},\tau_2}(0) = \dot{B}^{h}_{\tilde{\tau},\tau_2}(0) = 0$.

Setting $\tau_1 = \hat{\tau}$, $\tau_2 = \hat{\tau} - \tilde{\tau}$ in \eqref{dAp} + \eqref{dAn}, we get 
\begin{align}
-\frac{1}{n}&\sum_{i=1}^{n} \left[D_{\theta}\eta(X_i;\hat{\tau})(\hat{\theta} - \tilde{\theta})+D_{h}\eta(X_i;\hat{\tau})(\hat{h} - \tilde{h})\right] + \mu_{\hat{\tau}} [D_{\theta}\eta(\hat{\tau})(\hat{\theta} - \tilde{\theta})+D_{h}\eta(\hat{\tau})(\hat{h} - \tilde{h})] \nonumber\\
&+ \lambda J(\hat{h},\hat{h} - \tilde{h}) = 0, 
\label{dA0}
\end{align}
and setting $\tau_1 = \tilde{\tau}$, $\tau_2 = \hat{\tau} - \tilde{\tau}$ in  \eqref{dBp} + \eqref{dBn} implies
\begin{align}
-\frac{1}{n}& \sum_{i=1}^{n} \left[D_{\theta}\eta(X_i;\tau_0)(\hat{\theta} - \tilde{\theta})+D_{h}\eta(X_i;\tau_0)(\hat{h} - \tilde{h})\right] + \mu_{\tau_0}[D_{\theta}\eta(\tau_0)(\hat{\theta} - \tilde{\theta})+D_{h}\eta(\tau_0)(\hat{h} - \tilde{h})] \nonumber\\
&+ V_{\tau_0}[D_{\theta}\eta(\tau_0)(\tilde{\theta}-\theta_0)+D_{h}\eta(\tau_0)(\tilde{h}-h_0), D_{\theta}\eta(\tau_0)(\hat{\theta} - \tilde{\theta})+D_{h}\eta(\tau_0)(\hat{h} - \tilde{h})] \nonumber \\
&+\lambda J(\tilde{h},\hat{h} - \tilde{h})=0.
\label{dB0}
\end{align}
Equating \eqref{dA0} and \eqref{dB0}, rearranging terms, and subtracting ${\mu_{\tilde{\tau}}[D_{\theta}\eta(\hat{\tau})(\hat{\theta} - \tilde{\theta})+D_{h}\eta(\hat{\tau})(\hat{h} - \tilde{h})]}$ on both sides, we have 
\begin{align}
&\mu_{\hat{\tau}} [D_{\theta}\eta(\hat{\tau})(\hat{\theta} - \tilde{\theta})+D_{h}\eta(\hat{\tau})(\hat{h} - \tilde{h})]- \mu_{\tilde{\tau}}[D_{\theta}\eta(\hat{\tau})(\hat{\theta} - \tilde{\theta})+D_{h}\eta(\hat{\tau})(\hat{h} - \tilde{h})] + \lambda J(\hat{h} - \tilde{h},\hat{h} - \tilde{h}) \nonumber\\
& = V_{\tau_0}[D_{\theta}\eta(\tau_0)(\tilde{\theta}-\theta_0)+ D_{h}\eta(\tau_0)(\tilde{h}-h_0), D_{\theta}\eta(\tau_0)(\hat{\theta}-\tilde{\theta})+D_{h}\eta(\tau_0)(\hat{h} - \tilde{h})] \nonumber\\
& + \brac{\mu_{\tau_0}[D_{\theta}\eta(\hat{\tau})(\hat{\theta} - \tilde{\theta})+ D_{h}\eta(\hat{\tau})(\hat{h} - \tilde{h})] - \mu_{\tilde{\tau}}[D_{\theta}\eta(\hat{\tau})(\hat{\theta} - \tilde{\theta})+D_{h}\eta(\hat{\tau})(\hat{h} - \tilde{h})]} \nonumber\\
& + \brac{\frac{1}{n}\sum_{i=1}^{n} \left[ D_{\theta}\eta(X_i;\hat{\tau})(\hat{\theta} - \tilde{\theta})+D_{h}\eta(X_i;\hat{\tau})(\hat{h} - \tilde{h}) \right] - \mu_{\tau_0}[D_{\theta}\eta(\hat{\tau})(\hat{\theta} - \tilde{\theta}) + D_{h}\eta(\hat{\tau})(\hat{h} - \tilde{h})]} \nonumber\\
& -\brac{\frac{1}{n} \sum_{i=1}^{n}  \left[D_{\theta}\eta(X_i;\tau_0)(\hat{\theta}-\tilde{\theta})+ D_{h}\eta(X_i;\tau_0)(\hat{h} - \tilde{h})\right] - \mu_{\tau_0}[ D_{\theta}\eta(\tau_0)(\hat{\theta}-\tilde{\theta})+ D_{h}\eta(\tau_0)(\hat{h} - \tilde{h})]}. 
\label{III} 
\end{align}

Note that for any function $f \in L^2_0(\X)$ and $\tau_1, \tau_2 \in \N_{\theta_0} \times \N_{h_0}$, it is easy to show using the mean value theorem that
\begin{align}
\label{MVT}
\mu_{\tau_2}[f] - \mu_{\tau_1}[f] = V_{\tau_1 + \alpha(\tau_2-\tau_1)}[f,
D_{\theta} \eta(\tau_1+\alpha(\tau_2-\tau_1))(\theta_2-\theta_1) +  D_{h} \eta(\tau_1+\alpha(\tau_2-\tau_1))(h_2-h_1)]
\end{align}
for some $0 \leq \alpha \leq 1$. We are now ready to prove the following theorem.

\begin{theorem}\label{theorem2}
Under Assumptions \ref{A2} to \ref{A9}, as $n \rightarrow \infty$, $\lambda \rightarrow 0$, and $n\lambda^{\frac{1}{r}} \rightarrow \infty$,
\[V_{\tau_0} [ L_{\theta} (\hat{\theta} - \tilde{\theta}) + L_{h}(\hat{h}-\tilde{h} )] + \lambda J(\hat{h}-\tilde{h}) = O_{p}\left(n^{-1}\lambda^{-\frac{1}{r} }+ \lambda\right).\]
Therefore,
\[V_{\tau_0} [ L_{\theta} (\hat{\theta} - \theta_0) + L_{h}(\hat{h}-h_0 )] + \lambda J(\hat{h}-h_0) = O_{p}\left(n^{-1}\lambda^{-\frac{1}{r} }+ \lambda\right).\]
\end{theorem}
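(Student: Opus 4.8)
The plan is to bound the approximation error $\hat\tau - \tilde\tau$ via the master identity \eqref{III}, and then combine it with Theorem~\ref{theorem1} using the triangle inequality in $\norm{\cdot}_{\Q}$. Write $\delta \equiv \hat\tau - \tilde\tau = (\hat\theta - \tilde\theta,\, \hat h - \tilde h)$, and for $\tau \in \N_{\theta_0}\times\N_{h_0}$ abbreviate $D\eta(\tau)\delta \equiv D_\theta\eta(\tau)(\hat\theta - \tilde\theta) + D_h\eta(\tau)(\hat h - \tilde h)$, and likewise $D\eta(\tau_0)(\tilde\tau - \tau_0) \equiv D_\theta\eta(\tau_0)(\tilde\theta - \theta_0) + D_h\eta(\tau_0)(\tilde h - h_0)$. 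The quantity to be controlled, $R \equiv V_{\tau_0}[L_\theta(\hat\theta - \tilde\theta) + L_h(\hat h - \tilde h)] + \lambda J(\hat h - \tilde h)$, is exactly $\norm{\delta}_{\Q}^2$. Since $n^{-1}\lambda^{-1/r}\to 0$, Theorem~\ref{theorem1} gives $\tilde\tau \to \tau_0$, while $\hat\tau \in \N_{\theta_0}\times\N_{h_0}$ by Assumption~\ref{A7}; hence we may assume the whole segment joining $\hat\tau$ and $\tilde\tau$ lies in $\N_{\theta_0}\times\N_{h_0}$ and apply Assumptions~\ref{A6}, \ref{Ad}, \ref{A7} there.

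For the lower bound on the left side of \eqref{III}, its first two terms equal $[\mu_{\hat\tau} - \mu_{\tilde\tau}](D\eta(\hat\tau)\delta)$, which by the mean-value identity \eqref{MVT} equals $V_{\tau_\alpha}[D\eta(\hat\tau)\delta,\ D\eta(\tau_\alpha)\delta]$ for some point $\tau_\alpha$ on the segment from $\tilde\tau$ to $\hat\tau$. Polarizing, this is $\tfrac12\big\{ V_{\tau_\alpha}[D\eta(\hat\tau)\delta] + V_{\tau_\alpha}[D\eta(\tau_\alpha)\delta] - V_{\tau_\alpha}[(D\eta(\hat\tau) - D\eta(\tau_\alpha))\delta] \big\}$. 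Assumption~\ref{A7} exchanges each $V_{\tau_\alpha}$ for $V_{\tau_0}$ up to constants, Assumption~\ref{A6} bounds the first two summands below by a constant multiple of $V_{\tau_0}[L_\theta(\hat\theta - \tilde\theta) + L_h(\hat h - \tilde h)]$, and Assumption~\ref{Ad} bounds the subtracted summand above by $C_d$ times the same quantity; because $C_d < 2C_1$, and after adding back $\lambda J(\hat h - \tilde h)$, the left side of \eqref{III} is at least $c\,\norm{\delta}_{\Q}^2$ for some $c>0$ with probability tending to one.

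For the upper bound on the right side of \eqref{III}: the term $V_{\tau_0}[D\eta(\tau_0)(\tilde\tau - \tau_0),\ D\eta(\tau_0)\delta]$ and the term $[\mu_{\tau_0} - \mu_{\tilde\tau}](D\eta(\hat\tau)\delta)$ (rewritten through \eqref{MVT}) are each handled by the Cauchy--Schwarz inequality in $V_{\tau_0}$ and Assumptions~\ref{A6}--\ref{A7}: one factor is $\lesssim \norm{\delta}_{\Q}$ and the other is $\lesssim \{V_{\tau_0}[L_\theta(\tilde\theta - \theta_0) + L_h(\tilde h - h_0)] + \lambda J(\tilde h - h_0)\}^{1/2} = O_p((n^{-1}\lambda^{-1/r} + \lambda)^{1/2})$ by Theorem~\ref{theorem1}. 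The two remaining terms combine into the single centered empirical average $n^{-1}\sum_{i=1}^n (D\eta(X_i;\hat\tau) - D\eta(X_i;\tau_0))\delta - \mu_{\tau_0}[(D\eta(\hat\tau) - D\eta(\tau_0))\delta]$, whose integrand has $V_{\tau_0}$-norm $\lesssim \norm{\delta}_{\Q}$ by Assumption~\ref{Ad} and shrinks to $0$ since $\hat\tau \to \tau_0$; a local maximal inequality (with a peeling step, since $\hat\tau$ and $\delta$ are data-dependent) for the empirical process indexed by $\{(D\eta(\cdot;\tau) - D\eta(\cdot;\tau_0))g : \tau \in \N_{\theta_0}\times\N_{h_0},\ g\in\CH\}$, whose metric entropy is controlled by the $C^3$-smoothness of $\eta$ (Assumption~\ref{A4}) and the eigenvalue growth $\rho_\nu \sim \nu^r$ with $r>1$ (Assumption~\ref{A9}), bounds this average by $\epsilon\,\norm{\delta}_{\Q}^2 + O_p(n^{-1}\lambda^{-1/r} + \lambda)$ for any $\epsilon>0$ (via Young's inequality, using that $n^{-1}\lambda^{-1/r} + \lambda$ is bounded below by a constant multiple of $n^{-r/(r+1)}$). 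Combining, the identity \eqref{III} gives $c\,\norm{\delta}_{\Q}^2 \le \epsilon\,\norm{\delta}_{\Q}^2 + \norm{\delta}_{\Q}\cdot O_p((n^{-1}\lambda^{-1/r}+\lambda)^{1/2}) + O_p(n^{-1}\lambda^{-1/r}+\lambda)$; taking $\epsilon < c$, absorbing the first term, and solving the quadratic inequality for $\norm{\delta}_{\Q}$ yields $\norm{\delta}_{\Q} = O_p((n^{-1}\lambda^{-1/r}+\lambda)^{1/2})$, which is the first display. The second display follows from $\norm{\hat\tau - \tau_0}_{\Q} \le \norm{\delta}_{\Q} + \norm{\tilde\tau - \tau_0}_{\Q}$, both summands being $O_p((n^{-1}\lambda^{-1/r}+\lambda)^{1/2})$ (the first by the above, the second by Theorem~\ref{theorem1}), together with the fact that $\norm{\hat\tau - \tau_0}_{\Q}^2$ is precisely $V_{\tau_0}[L_\theta(\hat\theta - \theta_0) + L_h(\hat h - h_0)] + \lambda J(\hat h - h_0)$.

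I expect the main obstacle to be the combined empirical-average term involving $D\eta$ evaluated at the random estimator $\hat\tau$. In the nonparametric model of \citeasnoun{gu_qiu_1993} and the linear-additive model of \citeasnoun{yang2009penalized}, $D\eta(\cdot;\tau)$ does not depend on $\tau$, so $D\eta(X_i;\hat\tau)-D\eta(X_i;\tau_0)\equiv 0$ and this term vanishes, reducing \eqref{III} to the familiar situation; in the general nonlinear case it must be controlled by a genuine stochastic argument that is uniform over a shrinking neighborhood of $\tau_0$ and interacts correctly with the quadratic inequality so as not to degrade the rate. This is precisely the step where the nonlinear-specific Assumptions~\ref{A4} and \ref{Ad}, along with the condition $r>1$, are used in an essential way.
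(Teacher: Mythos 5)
Your proposal follows the paper's proof of Theorem~\ref{theorem2} step for step in its architecture: the same master identity~\eqref{III}, the same mean-value/polarization argument with Assumptions~\ref{A6}, \ref{Ad}, \ref{A7} to obtain the lower bound $c_3 V_{\tau_0}[L_\theta(\hat\theta-\tilde\theta)+L_h(\hat h-\tilde h)]$ on the left-hand side, the same Cauchy--Schwarz treatment of the first two right-hand terms using Theorem~\ref{theorem1}, and the same quadratic-inequality closing argument. Where you diverge is in the two centered empirical averages on the right side of~\eqref{III}. The paper bounds them \emph{separately}, invoking the elementary fact $n^{-1}\sum_i f(X_i)-\E[f]=O_p(n^{-\frac12})\Var^{\frac12}[f]$ as if $f = D_{\theta}\eta(\cdot;\hat\tau)(\hat\theta-\tilde\theta)+D_h\eta(\cdot;\hat\tau)(\hat h-\tilde h)$ were nonrandom; this yields the cross-term $O_p(n^{-\frac12})V_{\tau_0}^{\frac12}[L_\theta(\hat\theta-\tilde\theta)+L_h(\hat h-\tilde h)]$, which is then absorbed since $n^{-\frac12}\le n^{-\frac12}\lambda^{-\frac{1}{2r}}$. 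You instead \emph{combine} the two averages into the single centered quantity $n^{-1}\sum_i\bigl(D\eta(X_i;\hat\tau)-D\eta(X_i;\tau_0)\bigr)\delta - \mu_{\tau_0}[\,(D\eta(\hat\tau)-D\eta(\tau_0))\delta\,]$, correctly observe that it vanishes identically in the linear case, and propose a local maximal inequality with a peeling step to deal with the dependence on the random pair $(\hat\tau,\delta)$. This is a genuine refinement: the paper's application of the $O_p(n^{-1/2})$ bound to a data-dependent integrand is heuristic as written, and you put your finger on exactly the point where a uniform empirical-process bound (or a conditioning/chaining argument) is actually needed to make the step rigorous. The trade-off is that the paper's shortcut is much lighter and requires no entropy machinery, and both routes ultimately produce the same rate; but note that your proposal, like the paper, does not actually execute the maximal-inequality argument -- the entropy bound you attribute to Assumptions~\ref{A4} and~\ref{A9} is asserted, not derived, so the gap you diagnose remains open in your sketch as well.
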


\begin{proof}
We first obtain a lower bound for the first two terms in the LHS of \eqref{III} and an upper bound for the second term in the RHS of \eqref{III}. For some $0 \leq \alpha \leq 1$, let $\tau_*= \tilde{\tau} + \alpha (\hat{\tau}-\tilde{\tau})$. By Assumptions \ref{A6}, \ref{Ad}, and \ref{A7}, and equation \eqref{MVT}, the first two terms in the LHS of \eqref{III} yield
\begin{align*}
&2 \brac{ \mu_{\hat{\tau}}[D_{\theta}\eta(\hat{\tau})(\hat{\theta} - \tilde{\theta})+D_{h}\eta(\hat{\tau})(\hat{h} - \tilde{h})]- \mu_{\tilde{\tau}}[D_{\theta}\eta(\hat{\tau})(\hat{\theta} - \tilde{\theta})+D_{h}\eta(\hat{\tau})(\hat{h} - \tilde{h})] }\\
&= 2 V_{\tau_*} [D_{\theta}\eta(\hat{\tau})(\hat{\theta} - \tilde{\theta})+D_{h}\eta(\hat{\tau})(\hat{h} - \tilde{h}), D_{\theta} \eta(\tau_*)(\hat{\theta}-\tilde{\theta}) +  D_{h} \eta(\tau_*)(\hat{h}-\tilde{h})]\\ 
&= V_{\tau_*} [D_{\theta}\eta(\hat{\tau})(\hat{\theta} - \tilde{\theta})+D_{h}\eta(\hat{\tau})(\hat{h} - \tilde{h})] + V_{\tau_*} [D_{\theta} \eta(\tau_*)(\hat{\theta}-\tilde{\theta}) +  D_{h} \eta(\tau_*)(\hat{h}-\tilde{h})] \\
& \quad \quad - V_{\tau_*}[\left(D_{\theta}\eta(\hat{\tau})- D_{\theta} \eta(\tau_*)\right)(\hat{\theta} - \tilde{\theta})+ \left(D_{h}\eta(\hat{\tau}) - D_{h} \eta(\tau_*)\right)(\hat{h} - \tilde{h})]\\
&\ge C_3 \left\{ 2 C_1 V_{\tau_0}[L_{\theta}(\hat{\theta} - \tilde{\theta})+L_h(\hat{h} - \tilde{h})] - C_d V_{\tau_0} [L_{\theta}(\hat{\theta} - \tilde{\theta})+L_h(\hat{h} - \tilde{h})] \right\}\\
&\ge c_3 V_{\tau_0} [L_{\theta}(\hat{\theta} - \tilde{\theta})+L_h(\hat{h} - \tilde{h})],
\end{align*}
for some $0 \le c_3 \le C_3 (2C_1 - C_d)$. 
For the second term in the RHS of \eqref{III}, let $\tau_u= \tilde{\tau} + u (\tau_0-\tilde{\tau})$ for some $0 \leq u \leq 1$. By equation \eqref{MVT}, the Cauchy-Schwarz inequality, and Assumptions \ref{A6} and \ref{A7}, we have 
\begin{align*}
&\mu_{\tau_0}[D_{\theta}\eta(\hat{\tau})(\hat{\theta} - \tilde{\theta})+ D_{h}\eta(\hat{\tau})(\hat{h} - \tilde{h})] - \mu_{\tilde{\tau}} [D_{\theta}\eta(\hat{\tau})(\hat{\theta} - \tilde{\theta})+D_{h}\eta(\hat{\tau})(\hat{h} - \tilde{h})]\\
&= V_{\tau_u}[D_{\theta}\eta(\hat{\tau})(\hat{\theta} - \tilde{\theta})+D_{h}\eta(\hat{\tau})(\hat{h} - \tilde{h}), D_{\theta} \eta(\tau_u)(\theta_0-\tilde{\theta}) +  D_{h} \eta(\tau_u)(h_0-\tilde{h})]\\
&\leq V_{\tau_u}^{\frac{1}{2}}[D_{\theta}\eta(\hat{\tau})(\hat{\theta} - \tilde{\theta})+D_{h}\eta(\hat{\tau})(\hat{h} - \tilde{h})]  V_{\tau_u}^{\frac{1}{2}} [ D_{\theta} \eta(\tau_u)(\theta_0-\tilde{\theta}) +  D_{h} \eta(\tau_u)(h_0-\tilde{h})]\\
&\leq C_2C_4 V_{\tau_0}^{\frac{1}{2}}[L_{\theta}(\hat{\theta} - \tilde{\theta})+ L_{h}(\hat{h} - \tilde{h})]V_{\tau_0}^{\frac{1}{2}}[ L_{\theta} (\theta_0-\tilde{\theta}) +  L_h(h_0-\tilde{h})].
\end{align*}
Next, for any random sample $X_1, \ldots, X_n$ and any function $f \in L^2_0(\X)$, it is well known that $\E \brac{[ n^{-1} \sum_{i=1}^{n} f(X_i) - \E(f(X))]^2} = n^{-1} \Var[f(X)]$, and this implies $n^{-1} \sum_{i=1}^{n} f(X_i) - \E[f(X)] = O_{p} ( n^{-\frac{1}{2}})\Var^{\frac{1}{2}}[f(X)]$. Thus, the last two terms in the RHS of \eqref{III} are
\[O_{p} (n^{-\frac{1}{2}})V_{\tau_0}^{\frac{1}{2}}[D_{\theta}\eta(\hat{\tau})(\hat{\theta} - \tilde{\theta}) + D_{h}\eta(\hat{\tau})(\hat{h} - \tilde{h})]\quad \text{and} \quad O_{p} (n^{-\frac{1}{2}})V_{\tau_0}^{\frac{1}{2}}[D_{\theta}\eta(\tau_0)(\hat{\theta}-\tilde{\theta})+ D_{h}\eta(\tau_0)(\hat{h} - \tilde{h})],\]
respectively.

Putting everything together, as $\lambda \rightarrow 0$, $n \rightarrow \infty$, and $n\lambda^{\frac{1}{r}} \rightarrow \infty$, by all assumptions and Theorem \ref{theorem1}, we get
\begin{align*}
& \quad \frac{c_3}{2} V_{\tau_0} [ L_{\theta} (\hat{\theta} - \tilde{\theta}) + L_{h}(\hat{h}-\tilde{h} )] + \lambda J(\hat{h}-\tilde{h}) \nonumber \\
&\leq \mu_{\hat{\tau}}[D_{\theta}\eta(\hat{\tau})(\hat{\theta} - \tilde{\theta})+D_{h}\eta(\hat{\tau})(\hat{h} - \tilde{h})]- \mu_{\tilde{\tau}} [D_{\theta}\eta(\hat{\tau})(\hat{\theta} - \tilde{\theta})+D_{h}\eta(\hat{\tau})(\hat{h} - \tilde{h})] + \lambda J(\hat{h}-\tilde{h}) \nonumber\\
&\leq  V_{\tau_0}^{\frac{1}{2}}[D_{\theta}\eta(\tau_0)(\tilde{\theta}-\theta_0)+ D_{h}\eta(\tau_0)(\tilde{h}-h_0)] V_{\tau_0}^{\frac{1}{2}} [ D_{\theta}\eta(\tau_0)(\hat{\theta}-\tilde{\theta})+D_{h}\eta(\tau_0)(\hat{h} - \tilde{h})] \nonumber \\
&\quad + C_2C_4 V_{\tau_0}^{\frac{1}{2}}[L_{\theta}(\hat{\theta} - \tilde{\theta})+ L_{h}(\hat{h} - \tilde{h})]V_{\tau_0}^{\frac{1}{2}} [ L_{\theta} (\theta_0-\tilde{\theta}) +  L_h(h_0-\tilde{h})] \nonumber\\
&\quad + O_{p} (n^{-\frac{1}{2}})\brac{V_{\tau_0}^{\frac{1}{2}}[D_{\theta}\eta(\hat{\tau})(\hat{\theta} - \tilde{\theta}) + D_{h}\eta(\hat{\tau})(\hat{h} - \tilde{h})] + V_{\tau_0}^{\frac{1}{2}}[D_{\theta}\eta(\tau_0)(\hat{\theta}-\tilde{\theta})+ D_{h}\eta(\tau_0)(\hat{h} - \tilde{h})]}\nonumber\\
&\leq C_2(1+C_4) V_{\tau_0}^{\frac{1}{2}}[ L_{\theta} (\tilde{\theta}- \theta_0) + L_{h}(\tilde{h}-h_0 )]V_{\tau_0}^{\frac{1}{2}} [ L_{\theta} (\hat{\theta} - \tilde{\theta}) + L_{h}(\hat{h}-\tilde{h} )] \nonumber \\
& \quad +  O_{p}\left(n^{-\frac{1}{2}}\right)V_{\tau_0}^{\frac{1}{2}} [L_{\theta}(\hat{\theta}-\tilde{\theta}) + L_{h}(\hat{h}-\tilde{h})]  \nonumber \\
& \leq O_{p}\left(n^{-\frac{1}{2}}\lambda^{-\frac{1}{2r}} + \lambda^{\frac{1}{2}} \right)V_{\tau_0}^{\frac{1}{2}} [L_{\theta}(\hat{\theta}-\tilde{\theta}) + L_{h}(\hat{h}-\tilde{h})]. 
\end{align*}
The result then follows after trivial manipulation. 
\end{proof}

The following corollaries as direct results from Theorem \ref{theorem2}.
\begin{cor}Under Assumptions \ref{A2} to \ref{A9}, as $n \rightarrow \infty$, $\lambda \rightarrow 0$, and $n\lambda^{\frac{1}{r}} \rightarrow \infty$,  we have 
\[\Vert \hat{\theta}-\theta_0\Vert_{l^2} \sim \Vert\hat{\theta} - \theta_0\Vert_{\R^p} = V_{\tau_0}^{\frac{1}{2}} [ L_{\theta} (\hat{\theta} - \theta_0)] = O_{p}\left(n^{-\frac{1}{2}}\lambda^{-\frac{1}{2r}}+ \lambda^{\frac{1}{2}}\right).\]
\label{cor1}
\end{cor}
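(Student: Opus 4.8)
The plan is to peel off the parametric component from the joint rate in Theorem \ref{theorem2} using the separation (identifiability) condition in Assumption \ref{A3}(ii), and then rewrite the bound in the Euclidean norm via the equivalence of norms on $\R^p$.

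First I would observe that the hypotheses here ($n\to\infty$, $\lambda\to 0$, $n\lambda^{1/r}\to\infty$) are exactly those of Theorem \ref{theorem2}, and that Assumptions \ref{A2}--\ref{A9} are in force; hence Theorem \ref{theorem2} gives
\[
V_{\tau_0}\!\left[L_\theta(\hat\theta-\theta_0)+L_h(\hat h-h_0)\right]=O_p\!\left(n^{-1}\lambda^{-1/r}+\lambda\right).
\]

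Next I would isolate $\norm{\hat\theta-\theta_0}_{l^2}$. If $\hat\theta=\theta_0$ the statement is trivially true, so suppose $\hat\theta\neq\theta_0$ and set $\zeta=(\hat\theta-\theta_0)/\norm{\hat\theta-\theta_0}_{l^2}$, a unit vector in $\R^p$, together with $g=-(\hat h-h_0)/\norm{\hat\theta-\theta_0}_{l^2}\in\CH$. Assumption \ref{A3}(ii), specialized to $m=1$, is a deterministic inequality that holds for every unit vector $\zeta$ and every $g\in\CH$ with a single positive constant $c_\delta$ (in particular $g$ is not required to lie in $\N_{h_0}$), so it applies to these data-dependent choices and yields
\[
c_\delta<V_{\tau_0}\!\left(L_\theta\zeta-L_h g\right)=\frac{V_{\tau_0}\!\left[L_\theta(\hat\theta-\theta_0)+L_h(\hat h-h_0)\right]}{\norm{\hat\theta-\theta_0}_{l^2}^2}.
\]
Rearranging, $\norm{\hat\theta-\theta_0}_{l^2}^2<c_\delta^{-1}\,V_{\tau_0}\!\left[L_\theta(\hat\theta-\theta_0)+L_h(\hat h-h_0)\right]=O_p(n^{-1}\lambda^{-1/r}+\lambda)$, and taking square roots, $\norm{\hat\theta-\theta_0}_{l^2}=O_p(n^{-1/2}\lambda^{-1/(2r)}+\lambda^{1/2})$.

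Finally I would transfer this to the statistical norm: by Remark \ref{remark1}(ii) all norms on $\R^p$ are equivalent, so $\norm{\hat\theta-\theta_0}_{\R^p}\sim\norm{\hat\theta-\theta_0}_{l^2}$, and by the definition of $\ip{\cdot,\cdot}_{\R^p}$ (for $m=1$) one has $V_{\tau_0}^{1/2}[L_\theta(\hat\theta-\theta_0)]=\norm{\hat\theta-\theta_0}_{\R^p}$; chaining these identifications gives the stated $O_p(n^{-1/2}\lambda^{-1/(2r)}+\lambda^{1/2})$ rate for all three quantities. I do not expect a genuine obstacle here: the only points requiring care are the normalization needed to invoke Assumption \ref{A3}(ii) (which is stated only for unit vectors), the trivial separate handling of the event $\{\hat\theta=\theta_0\}$, and the remark that replacing $V_{\tau_0}$ by $\sum_{l=1}^m V_{l,\tau_0}$ throughout gives the analogous conclusion for general $m$.
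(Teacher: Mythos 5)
Your argument is correct. Starting from Theorem~\ref{theorem2}, normalizing by $\norm{\hat\theta-\theta_0}_{l^2}$, and applying Assumption~\ref{A3}(ii) with $\zeta=(\hat\theta-\theta_0)/\norm{\hat\theta-\theta_0}_{l^2}$ and $g=-(\hat h-h_0)/\norm{\hat\theta-\theta_0}_{l^2}$ gives exactly the bound $\norm{\hat\theta-\theta_0}_{l^2}^2\le c_\delta^{-1}\,\norm{(\hat\theta-\theta_0,\hat h-h_0)}_{\Q}^2$, and the transfer to $\norm{\cdot}_{\R^p}=V_{\tau_0}^{1/2}[L_\theta(\cdot)]$ via Remark~\ref{remark1}(ii) is exactly right. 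The paper proves the corollary by invoking the abstract equivalence $\norm{\cdot}_{\Q}\sim\norm{\cdot}_{\Q,1}$ from Remark~\ref{remark1}(iv), which in turn rests on the same use of Assumption~\ref{A3}(ii) inside the completeness proof of Theorem~A.\ref{inner_product_theorem}; so the two arguments are the same in substance, but your version unpacks the hidden constant ($c_\delta^{-1}$) and is self-contained, whereas the paper's is a one-line pointer to the norm-equivalence machinery. Neither gains generality over the other; yours is merely more explicit and avoids a reader having to trace through the Appendix to see why the equivalence holds.
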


\begin{cor}If $\eta(\theta,h)= \alpha(x;\theta) + h(x)$, $L_h$ can be chosen to be the inclusion operator from $\CH$ to $L_0^2(\X)$. Under the same conditions as in Theorem \ref{theorem2}, we have 
\[\Vert\hat{h} - h_0\Vert_{\CH} =  [V_{\tau_0}(\hat{h} - h_0)+\lambda J(\hat{h} - h_0)]^{1/2} = O_{p}\left(n^{-\frac{1}{2}}\lambda^{-\frac{1}{2r} }+ \lambda^{\frac{1}{2}}\right).\]
\label{cor2}
\end{cor}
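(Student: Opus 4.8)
The plan is to deduce Corollary \ref{cor2} directly from Theorem \ref{theorem2} and Corollary \ref{cor1} by peeling off the nonparametric part from the mixed quantity $V_{\tau_0}[L_{\theta}(\hat{\theta}-\theta_0)+L_h(\hat{h}-h_0)]$ controlled in Theorem \ref{theorem2}. First I would record that for the additive model $\eta(x;\theta,h)=\alpha(x;\theta)+h(x)$ the Fr\'echet partial derivative $D_h\eta(\theta,h)$ equals the inclusion operator $\iota:\CH\to L^2_0(\X)$ for every $(\theta,h)$, so that the choice $L_h=\iota$ is compatible with Assumption \ref{A6}, and, taking $\norm{g}_{\CH}^2=V_{\tau_0}(g)+\lambda J(g)$, also with Assumption \ref{A3}(i) (with $M_1=M_2=1$); this is exactly the norm referred to in Remark \ref{remark2}(i). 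Consequently $V_{\tau_0}(L_h(\hat{h}-h_0))=V_{\tau_0}(\hat{h}-h_0)$ and $\norm{\hat{h}-h_0}_{\CH}^2=V_{\tau_0}(\hat{h}-h_0)+\lambda J(\hat{h}-h_0)$, so it suffices to bound this last expression.

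Next, since $V_{\tau_0}^{1/2}(\cdot)$ is a seminorm on $L^2_0(\X)$, the triangle inequality gives $V_{\tau_0}^{1/2}(L_h(\hat{h}-h_0))\le V_{\tau_0}^{1/2}(L_{\theta}(\hat{\theta}-\theta_0)+L_h(\hat{h}-h_0))+V_{\tau_0}^{1/2}(L_{\theta}(\hat{\theta}-\theta_0))$. By Theorem \ref{theorem2} the first term on the right is $O_p\big((n^{-1}\lambda^{-1/r}+\lambda)^{1/2}\big)$, and by Corollary \ref{cor1} the second term is $O_p\big(n^{-1/2}\lambda^{-1/(2r)}+\lambda^{1/2}\big)$; since $(n^{-1}\lambda^{-1/r}+\lambda)^{1/2}\sim n^{-1/2}\lambda^{-1/(2r)}+\lambda^{1/2}$, this yields $V_{\tau_0}(\hat{h}-h_0)=O_p(n^{-1}\lambda^{-1/r}+\lambda)$. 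Separately, because $V_{\tau_0}(\cdot)\ge 0$, Theorem \ref{theorem2} already gives $\lambda J(\hat{h}-h_0)=O_p(n^{-1}\lambda^{-1/r}+\lambda)$. Adding these two bounds and taking square roots (using $\sqrt{a+b}\le\sqrt{a}+\sqrt{b}$) produces the claimed rate $\norm{\hat{h}-h_0}_{\CH}=O_p\big(n^{-1/2}\lambda^{-1/(2r)}+\lambda^{1/2}\big)$.

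The only step that needs genuine care is the first one: verifying that $L_h$ may legitimately be taken to be the inclusion operator while remaining consistent with the full set of Assumptions \ref{A2}--\ref{A9} and with the definitions of $\norm{\cdot}_{\CH}$ and $\norm{\cdot}_{\Q}$ used earlier. Once that bookkeeping is in place, everything else is a two-line consequence of the seminorm triangle inequality together with the already-established rates in Theorem \ref{theorem2} and Corollary \ref{cor1}, so I do not anticipate any further obstacle.
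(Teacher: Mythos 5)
Your proof is correct and reaches the same conclusion the paper does, but by a mildly different route. The paper disposes of both Corollary~\ref{cor1} and Corollary~\ref{cor2} in one line by invoking Remark~\ref{remark1}(iv): the norm $\norm{\cdot}_{\Q}$ induced by the inner product~\eqref{inner_product} is equivalent to the product norm $\norm{(\zeta,g)}_{\Q,1}=\norm{\zeta}_{\R^p}+\norm{g}_{\CH}$, so the bound in Theorem~\ref{theorem2} on $\norm{(\hat\theta-\theta_0,\hat h-h_0)}_{\Q}$ immediately transfers to $\norm{\hat h-h_0}_{\CH}$. You instead bound $V_{\tau_0}^{1/2}(L_h(\hat h-h_0))$ by the triangle inequality for the seminorm $V_{\tau_0}^{1/2}$ on $L_0^2(\X)$, controlling the mixed term by Theorem~\ref{theorem2} and the $L_\theta$ term by Corollary~\ref{cor1}, while the $\lambda J(\hat h-h_0)$ piece comes directly from Theorem~\ref{theorem2}. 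The substance is the same --- both rely on the fact (from Assumption~\ref{A3}(ii) via Lemma~A.\ref{positive_definite_Omega}) that the $\theta$-contribution to $V_{\tau_0}(L_\theta\zeta+L_hg)$ can be separated off --- but your version has the small advantage of making the constant-tracking explicit (the triangle inequality contributes a constant $1$, and Corollary~\ref{cor1}'s constant is $\lambda$-independent), whereas the paper's appeal to abstract norm equivalence leaves the $\lambda$-uniformity of the equivalence constants implicit. Your preliminary observation that $L_h$ may be taken as the inclusion operator $\iota$, so that $V_{\tau_0}(L_h g)=V_{\tau_0}(g)$ and $\norm{g}_{\CH}^2=V_{\tau_0}(g)+\lambda J(g)$ (cf. Remark~\ref{remark2}(i)), is exactly the bookkeeping the corollary's hypothesis is meant to license.
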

Corollaries \ref{cor1} and \ref{cor2} hold since $\norm{\cdot}_{\R^p}$ is equivalent to the Euclidean norm $\norm{\cdot}_{l^2}$ on $\R^p$, and $\norm{\cdot}_{\Q}$ is equivalent to the product norm $\norm{\cdot}_{\Q,1}$ on the joint parameter space $\Q$ (see Remark \ref{remark1}$(ii)$ and $(iv)$). We now derive a convergence rate of the overall density function, measured by the symmetrized Kullback-Leibler distance, defined in \eqref{SKL_def}.

\begin{theorem}\label{SKL}Under Assumptions \ref{A2} to \ref{A9}, as $n \rightarrow \infty$, $\lambda \rightarrow 0$, and $n\lambda^{\frac{1}{r}} \rightarrow \infty$,
\[\SKL(\tau_0, \hat{\tau})= O_{p}\left(n^{-1}\lambda^{-\frac{1}{r} }+ \lambda\right).\]
\end{theorem}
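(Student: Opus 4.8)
The plan is to reduce $\SKL(\tau_0,\hat\tau)$ to a quantity that is already controlled by Theorem \ref{theorem2}. Write $\eta_0 = \eta(\theta_0,h_0)$ and $\hat\eta = \eta(\hat\theta,\hat h)$. The first step is to observe that in the definition \eqref{SKL_def} the normalizing constants cancel and $\mu_\tau$ is linear in its argument, so that
\[\SKL(\tau_0,\hat\tau) = \mu_{\tau_0}[\eta_0 - \hat\eta] + \mu_{\hat\tau}[\hat\eta - \eta_0] = \mu_{\hat\tau}[\hat\eta - \eta_0] - \mu_{\tau_0}[\hat\eta - \eta_0];\]
in other words, the symmetrized Kullback--Leibler distance is exactly the increment of the map $\tau \mapsto \mu_\tau[\hat\eta - \eta_0]$ between $\tau_0$ and $\hat\tau$. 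Since $\hat\eta - \eta_0 \in L^2_0(\X)$ and, by Assumption \ref{A7}, the set $\N_{\theta_0}\times\N_{h_0}$ is convex and contains $\hat\tau$, I would then invoke the mean value identity \eqref{MVT} with $\tau_1 = \tau_0$, $\tau_2 = \hat\tau$, and $f = \hat\eta - \eta_0$, which gives, for some $\alpha \in [0,1]$ and $\tau_\alpha = \tau_0 + \alpha(\hat\tau - \tau_0) \in \N_{\theta_0}\times\N_{h_0}$,
\[\SKL(\tau_0,\hat\tau) = V_{\tau_\alpha}\bigl[\hat\eta - \eta_0,\ D_\theta\eta(\tau_\alpha)(\hat\theta - \theta_0) + D_h\eta(\tau_\alpha)(\hat h - h_0)\bigr].\]

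Next I would apply the Cauchy--Schwarz inequality for the bilinear form $V_{\tau_\alpha}$ and then pass from $V_{\tau_\alpha}$ to $V_{\tau_0}$ using Assumption \ref{A7}, obtaining
\[\SKL(\tau_0,\hat\tau) \le C_4\, V_{\tau_0}^{1/2}(\hat\eta - \eta_0)\, V_{\tau_0}^{1/2}\bigl[D_\theta\eta(\tau_\alpha)(\hat\theta - \theta_0) + D_h\eta(\tau_\alpha)(\hat h - h_0)\bigr].\]
The second factor is at most $C_2^{1/2}\, V_{\tau_0}^{1/2}[L_\theta(\hat\theta - \theta_0) + L_h(\hat h - h_0)]$ directly by Assumption \ref{A6}. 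For the first factor I would use the fundamental theorem of calculus (legitimate since $\eta$ is continuously Fr\'echet differentiable on the convex neighborhood by Assumption \ref{A4}) to write $\hat\eta - \eta_0$ as the Bochner integral $\int_0^1 [D_\theta\eta(\tau_t)(\hat\theta - \theta_0) + D_h\eta(\tau_t)(\hat h - h_0)]\,dt$ with $\tau_t = \tau_0 + t(\hat\tau - \tau_0)$, and then apply Minkowski's integral inequality for the seminorm $V_{\tau_0}^{1/2}$ together with Assumption \ref{A6} once more to conclude that $V_{\tau_0}^{1/2}(\hat\eta - \eta_0) \le C_2^{1/2}\, V_{\tau_0}^{1/2}[L_\theta(\hat\theta - \theta_0) + L_h(\hat h - h_0)]$.

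Combining the two bounds gives $\SKL(\tau_0,\hat\tau) \le C_2 C_4\, V_{\tau_0}[L_\theta(\hat\theta - \theta_0) + L_h(\hat h - h_0)] \le C_2 C_4\bigl\{V_{\tau_0}[L_\theta(\hat\theta - \theta_0) + L_h(\hat h - h_0)] + \lambda J(\hat h - h_0)\bigr\}$, and Theorem \ref{theorem2} says the bracketed quantity is $O_p(n^{-1}\lambda^{-1/r} + \lambda)$, which is the claim. I do not anticipate a real obstacle: once $\SKL$ is recognized as an increment of $\mu_\tau[\hat\eta - \eta_0]$, the estimate is essentially a corollary of Theorem \ref{theorem2}, where all the substantive work resides. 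The points requiring care are organizational --- checking that the interpolated parameters $\tau_\alpha$ and $\tau_t$ remain in $\N_{\theta_0}\times\N_{h_0}$ so that Assumptions \ref{A4}, \ref{A6}, \ref{A7} are in force (this is why convexity of the neighborhood and $\hat\tau \in \N_{\theta_0}\times\N_{h_0}$ in Assumption \ref{A7} are invoked), justifying the Banach-valued mean value representations, and keeping track of which variance functional, $V_{\tau_\alpha}$ or $V_{\tau_0}$, appears at each step so that Assumption \ref{A7} can reconcile them. If any step is the ``hard part,'' it is just this bit of bookkeeping.
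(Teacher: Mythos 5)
Your proof is correct and reaches the same final bound $C_2C_4\,V_{\tau_0}[L_\theta(\hat\theta-\theta_0)+L_h(\hat h-h_0)]$, but via a route that differs from the paper's in a worthwhile way. The paper linearizes $\eta(\hat\theta,\hat h)-\eta(\theta_0,h_0)$ using the Fr\'echet derivative at $\tau_0$, producing an $o(\Vert(\hat\theta-\theta_0,\hat h-h_0)\Vert_\Q)$ remainder, and then argues that since the increment goes to zero in probability, the $\SKL$ ``$\rightarrow$'' its linearization; only afterward does it apply the mean value identity \eqref{MVT} and Cauchy--Schwarz to the linearized expression. That limiting step is written informally (it conflates ``converges to zero at the same rate as'' with ``equals asymptotically''). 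You avoid it entirely: you keep the exact quantity $\hat\eta-\eta_0$ throughout, apply \eqref{MVT} directly to it, Cauchy--Schwarz with Assumption \ref{A7} to split off $V_{\tau_0}^{1/2}(\hat\eta-\eta_0)$, and then control that factor by writing $\hat\eta-\eta_0$ as a Bochner integral of directional Fr\'echet derivatives over the segment $[\tau_0,\hat\tau]$ and invoking Minkowski's integral inequality for the seminorm $V_{\tau_0}^{1/2}$ together with Assumption \ref{A6}. This buys you a fully non-asymptotic inequality $\SKL(\tau_0,\hat\tau)\le C_2C_4\,V_{\tau_0}[L_\theta(\hat\theta-\theta_0)+L_h(\hat h-h_0)]$ valid for every sample point with $\hat\tau\in\N_{\theta_0}\times\N_{h_0}$, whereas the paper's bound is established only in the limit; the cost is one extra appeal to the fundamental theorem of calculus for Banach-space-valued maps and to Minkowski's integral inequality, both of which are standard and legitimately available under Assumption \ref{A4}. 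The bookkeeping you flag --- keeping $\tau_\alpha$ and $\tau_t$ in the convex set $\N_{\theta_0}\times\N_{h_0}$, covered by Assumption \ref{A7} --- is handled correctly.
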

\begin{proof}By the definition of the Fr\'echet derivative of $\eta(\theta,h)$ at $(\theta_0,h_0)$, we have 
\[\eta(\hat{\theta},\hat{h}) =  \eta(\theta_0,h_0) + D_{\theta}\eta(\theta_0,h_0)(\hat{\theta}-\theta_0) + D_{h}\eta(\theta_0,h_0)(\hat{h}-h_0) + o(\Vert(\hat{\theta}-\theta_0, \hat{h}-h_0)\Vert_{\Q}). \]
By Theorem \ref{theorem2}, $\Vert(\hat{\theta}-\theta_0, \hat{h}-h_0)\Vert_{\Q} \rightarrow 0$ with probability tending to 1 as $\lambda \rightarrow 0$, and $n\lambda^{\frac{1}{r}} \rightarrow \infty$. Therefore,
\begin{align*}
\SKL(\tau_0, \hat{\tau}) =& \mu_{\tau_0}[\eta(\theta_0,h_0) - \eta(\hat{\theta},\hat{h})] + \mu_{\hat{\tau}}[\eta(\hat{\theta},\hat{h}) - \eta(\theta_0,h_0)]\\
\rightarrow & \mu_{\tau_0}[D_{\theta}\eta(\tau_0)(\theta_0 - \hat{\theta}) + D_h\eta(\tau_0)(h_0-\hat{h})]-\mu_{\hat{\tau}}[D_{\theta}\eta(\tau_0)(\theta_0 - \hat{\theta}) + D_h\eta(\tau_0)(h_0-\hat{h})].
\end{align*}
By equation \eqref{MVT}, for some $0\leq u \leq 1$, we have 
\begin{align}
&\mu_{\tau_0}[D_{\theta}\eta(\tau_0)(\theta_0 - \hat{\theta}) + D_h\eta(\tau_0)(h_0-\hat{h})]-\mu_{\hat{\tau}}[D_{\theta}\eta(\tau_0)(\theta_0 - \hat{\theta}) + D_h\eta(\tau_0)(h_0-\hat{h})] \nonumber\\
&= V_{\hat{\tau} + u(\tau_0-\hat{\tau})} [D_{\theta}\eta(\tau_0)(\theta_0 - \hat{\theta}) + D_h\eta(\tau_0)(h_0-\hat{h}),\nonumber\\
&\quad \quad \quad \quad \quad \quad D_{\theta}\eta(\hat{\tau} + u (\tau_0-\hat{\tau}))(\theta_0 - \hat{\theta}) + D_h\eta(\hat{\tau} + u (\tau_0-\hat{\tau}))(h_0-\hat{h})].\label{SKL_1}
\end{align}
By the Cauchy-Schwarz inequality, Assumptions \ref{A6} and \ref{A7}, and Theorem \ref{theorem2}, \eqref{SKL_1} is bounded above by
\begin{align*}
C_2C_4 V_{\tau_0}[L_{\theta} ( \theta_0 - \hat{\theta}) + L_h(h_0 - \hat{h})]= O_{p}\left(n^{-1}\lambda^{-\frac{1}{r} }+ \lambda\right).
\end{align*} 

\end{proof}
\begin{remark}\
Note that results in Corollary \ref{cor1}, Corollary \ref{cor2}, and Theorem \ref{SKL} are independent of the linear operators $L_{\theta}$ and $L_{h}$. 
\end{remark}

\end{subsection}

\begin{subsection}{Joint asymptotics for case $m\ge1$}\label{result for general m}
In this section, we will discuss the joint consistency for $m\ge1$. The proof for general $m$ is a simple extension of the proof for $m=1$ discussed in Sections \ref{Linear approximation} and \ref{approximation error}. 

The linear approximation estimate $\tilde{\tau} = (\tilde{\theta},\tilde{h})$ of $\hat{\tau}=(\hat{\theta},\hat{h})$, where $\tilde{h} = \sum_{\nu} \tilde{h}_{\nu} \phi_{0,\nu}$, can be obtained by minimizing equation \eqref{APL}. We get

\begin{align*}
\tilde{\theta} &= \theta_0 + \Omega_{\lambda}^{-1} \brac{\sum_{l=1}^{m}\bar{\alpha}_{n_l} - \sum_{\nu}\frac{\sum_{l=1}^{m} \bar{\beta}_{\nu,n_l} - \lambda \rho_{0,\nu} h_{0,\nu}}{1+\lambda \rho_{0,\nu}}\sum_{l=1}^{m} V_{l,\tau_0}[D_{\theta}\eta_l(\tau_0),D_{h}\eta_l(\tau_0)\phi_{0,\nu}]},\\
\tilde{h}_{\nu} &= \frac{\sum_{l=1}^{m}\bar{\beta}_{\nu,n_l} + h_{0,\nu}}{1+\lambda \rho_{0,\nu}} - (\tilde{\theta}-\theta_0)^{T} \frac{\sum_{l=1}^{m} V_{l,\tau_0}[D_{\theta}\eta_l(\tau_0),D_{h}\eta_l(\tau_0)\phi_{0,\nu}]}{1+\lambda \rho_{0,\nu}},
\end{align*}
where 
\begin{align*}
h_{0,\nu} &= \sum_{l=1}^{m} V_{l,\tau_0}(D_{h} \eta_l(\tau_0)h_0, D_{h} \eta_l(\tau_0)\phi_{0,\nu}),\\
\bar{\alpha}_{n_l} &= \frac{1}{n_l}\sum_{i=1}^{n_l} D_{\theta}\eta_l(X_{li};\tau_0) - \mu_{l,\tau_0} \left[D_{\theta}\eta_l(\tau_0) \right],\\
\bar{\beta}_{\nu,n_l} &= \frac{1}{n_l} \sum_{i=1}^{n_l} D_{h}\eta_l(X_{li};\tau_0)\phi_{0,\nu} - \mu_{l,\tau_0} \left[D_{h}\eta_l(\tau_0)\phi_{0,\nu} \right],\\
\Omega_{\lambda} &=  \sum_{l=1}^{m} V_{l,\tau_0}[D_{\theta}\eta_l(\tau_0)] - \sum_{\nu} \frac{\brac{\sum_{l=1}^{m}V_{l,\tau_0}[D_{\theta}\eta_l(\tau_0),D_{h}\eta_l(\tau_0)\phi_{0,\nu}]}^{\otimes 2}}{1+\lambda \rho_{0,\nu}}.
\end{align*}
Recall that $n = \min_{1\leq l \leq m} \{n_l\}$. Since
\[\E(\sum_{l=1}^{m}\bar{\alpha}_{n_l}) = \E(\sum_{l=1}^{m}\bar{\beta}_{\nu,n_l}) = 0, \quad \E[\Vert(\sum_{l=1}^{m}\bar{\alpha}_{n_l})\Vert_{l^2}^2 ] = O(n^{-1}), \quad \E[(\sum_{l=1}^{m}\bar{\beta}_{\nu,n_l})^2] \leq n^{-1},\]
one can get the following results, which are similar to Lemma \ref{lemma1} and Theorems \ref{theorem1} -- \ref{SKL}. Note that one also has corollaries to Theorem \ref{theorem5} which are analogous to Corollaries \ref{cor1} and \ref{cor2}, but for $m>1$. They are omitted to save space.

\begin{lemma}
Under Assumptions \ref{A2} to \ref{A4}, \ref{A8}, and \ref{A9}, as ${n =\min_{1\leq l \leq m} \{n_l\} \rightarrow \infty}$ and $\lambda \rightarrow 0$,
\begin{align*}
&\E\brac{ \sum_{l=1}^{m} V_{l,\tau_0} [D_{\theta} \eta_l(\tau_0) (\tilde{\theta} - \theta_0)]} \leq c \E \left[ (\tilde{\theta}-\theta_0)^{T}(\tilde{\theta}-\theta_0) \right] = O(n^{-1}\lambda^{-\frac{1}{r}}),\\
&\E\brac{ \sum_{l=1}^{m} V_{l,\tau_0}[D_{h}\eta_l(\tau_0)(\tilde{h}-h_0)] + \lambda J(\tilde{h}-h_0)} = O(n^{-1}\lambda^{-\frac{1}{r}} + \lambda).
\end{align*}
\end{lemma}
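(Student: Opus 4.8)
The plan is to reproduce the argument behind Lemma~\ref{lemma1} almost verbatim, working with the closed-form expressions for $\tilde\theta-\theta_0$ and $\tilde h_\nu$ displayed at the start of this section in place of their $m=1$ analogues. The only genuinely new input relative to the single-sample case is the behaviour of the aggregated empirical-process terms $\sum_{l=1}^m\bar\alpha_{n_l}$ and $\sum_{l=1}^m\bar\beta_{\nu,n_l}$, for which we are given $\E(\sum_l\bar\alpha_{n_l})=\E(\sum_l\bar\beta_{\nu,n_l})=0$, $\E\norm{\sum_l\bar\alpha_{n_l}}_{l^2}^2=O(n^{-1})$ and $\E(\sum_l\bar\beta_{\nu,n_l})^2\le n^{-1}$, with $n=\min_{1\le l\le m}n_l$; everything else amounts to carrying the finite sums over $l$ through the same algebra.

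Before taking expectations I would record four deterministic facts, each the direct generalization of its $m=1$ counterpart. Write $v_\nu\equiv\sum_{l=1}^m V_{l,\tau_0}[D_\theta\eta_l(\tau_0),D_h\eta_l(\tau_0)\phi_{0,\nu}]\in\R^p$. (i)~$\sum_\nu\norm{v_\nu}_{l^2}^2<\infty$: for each coordinate $k$, the scalars $v_\nu^k$ are the Fourier coefficients of $(D_{\theta^k}\eta_1(\tau_0),\dots,D_{\theta^k}\eta_m(\tau_0))$ in the orthonormal system $\{(D_h\eta_1(\tau_0)\phi_{0,\nu},\dots,D_h\eta_m(\tau_0)\phi_{0,\nu})\}_\nu$ of the product Hilbert space $\bigoplus_{l=1}^m L^{2}_0(\X_l)$ equipped with the inner product $\sum_l V_{l,\tau_0}(\cdot,\cdot)$, so Bessel's inequality bounds $\sum_\nu(v_\nu^k)^2$ by $\sum_{l=1}^m V_{l,\tau_0}[D_{\theta^k}\eta_l(\tau_0)]$, which is finite by the Fr\'echet differentiability in Assumption~\ref{A4}. (ii)~$\Omega_\lambda$ is invertible with $\norm{\Omega_\lambda^{-1}}$ bounded uniformly as $\lambda\to0$: completing the square gives $\zeta^\top\Omega_\lambda\zeta=\min_{g\in\CH}\{\sum_l V_{l,\tau_0}[D_\theta\eta_l(\tau_0)\zeta-D_h\eta_l(\tau_0)g]+\lambda J(g)\}$, and the identifiability content of Assumptions~\ref{A3}--\ref{A4} forces this minimized penalized quadratic to be bounded below over $\norm{\zeta}_{l^2}=1$. (iii)~Assumption~\ref{A9} gives $\sum_\nu(1+\lambda\rho_{0,\nu})^{-1}=O(\lambda^{-1/r})$. (iv)~Assumption~\ref{A2} gives $\sum_\nu\rho_{0,\nu}h_{0,\nu}^2=J(h_0)<\infty$.

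For the first assertion, substitute $\sum_l\bar\alpha_{n_l}$, the stochastic term $\sum_\nu(1+\lambda\rho_{0,\nu})^{-1}(\sum_l\bar\beta_{\nu,n_l})v_\nu$, and the deterministic bias $\sum_\nu\lambda\rho_{0,\nu}(1+\lambda\rho_{0,\nu})^{-1}h_{0,\nu}v_\nu$ into the formula for $\tilde\theta-\theta_0$, pull $\norm{\Omega_\lambda^{-1}}^2$ out by (ii), take $\E\norm{\cdot}_{l^2}^2$, and bound the stochastic parts by a Cauchy--Schwarz split over $\nu$ with weights $1+\lambda\rho_{0,\nu}$ and the bias by $\lambda\rho_{0,\nu}(1+\lambda\rho_{0,\nu})^{-1}\le\lambda\rho_{0,\nu}$, using (i)--(iv); the cross terms are absorbed by a further Cauchy--Schwarz. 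Combining exactly as in the proof of Lemma~\ref{lemma1} yields $\E\norm{\tilde\theta-\theta_0}_{l^2}^2=O(n^{-1}\lambda^{-1/r})$, and $\E\{\sum_l V_{l,\tau_0}[D_\theta\eta_l(\tau_0)(\tilde\theta-\theta_0)]\}\le c\,\E\norm{\tilde\theta-\theta_0}_{l^2}^2$ follows from boundedness of the $D_\theta\eta_l(\tau_0)$. For the second assertion, the aggregate orthonormality $\sum_l V_{l,\tau_0}(D_h\eta_l(\tau_0)\phi_{0,\nu},D_h\eta_l(\tau_0)\phi_{0,\mu})=\delta_{\nu\mu}$ lets one expand the left-hand side by Parseval as $\sum_\nu(1+\lambda\rho_{0,\nu})(\tilde h_\nu-h_{0,\nu})^2$; inserting $\tilde h_\nu-h_{0,\nu}=(1+\lambda\rho_{0,\nu})^{-1}\{\sum_l\bar\beta_{\nu,n_l}-\lambda\rho_{0,\nu}h_{0,\nu}-(\tilde\theta-\theta_0)^\top v_\nu\}$, squaring, and taking expectations gives $\sum_\nu(1+\lambda\rho_{0,\nu})^{-1}\{n^{-1}+\lambda^2\rho_{0,\nu}^2h_{0,\nu}^2\}+O(\E\norm{\tilde\theta-\theta_0}_{l^2}^2)$, which is $O(n^{-1}\lambda^{-1/r}+\lambda)$ by (i)--(iv) and the bound just proved, the cross terms again handled by Cauchy--Schwarz.

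The main obstacle is bookkeeping rather than any new idea, and it localizes in one place: unlike the single-sample case, the per-group terms $\bar\beta_{\nu,n_l}$ are orthonormal only after summing over $l$ --- the eigensystem is built from the aggregate functional $\sum_l V_{l,\tau_0}$ --- so $\sum_l\bar\beta_{\nu,n_l}$ cannot be Parseval-expanded and the cross-covariances $\E\{(\sum_l\bar\beta_{\nu,n_l})(\sum_l\bar\beta_{\mu,n_l})\}$ for $\nu\ne\mu$ need not vanish. One therefore cannot diagonalize as in the $m=1$ computation and must work only with the per-$\nu$ scalar bound $\E(\sum_l\bar\beta_{\nu,n_l})^2\le n^{-1}$, controlling every sum over $\nu$ by Cauchy--Schwarz against the convergent weights $(1+\lambda\rho_{0,\nu})^{-1}$ and the square-summable sequence $\{v_\nu\}$. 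A secondary point, inherited from the $m=1$ argument, is the uniform invertibility of $\Omega_\lambda$ as $\lambda\to0$; this is exactly where the identifiability hypotheses (Assumptions~\ref{A3}--\ref{A4}) enter, and is otherwise routine.
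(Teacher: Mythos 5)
Your proposal is correct and takes essentially the same route as the paper's $m=1$ proof of Lemma~\ref{lemma1}: bound the eigenvalues of $\Omega_\lambda$ from below uniformly in $\lambda$, exploit square summability of $\{v_\nu\}$ and the eigenvalue growth to control the stochastic and bias contributions, then pass to expectations term by term. Your completed-square identity $\zeta^{T}\Omega_\lambda\zeta=\min_{g\in\CH}\bigl\{\sum_{l=1}^{m} V_{l,\tau_0}[D_\theta\eta_l(\tau_0)\zeta-D_h\eta_l(\tau_0)g]+\lambda J(g)\bigr\}$ is a cleaner packaging of what the paper obtains via the Riesz representer $G$ and the decomposition $\Omega_\lambda=\Omega+\Sigma_\lambda$ (with two auxiliary lemmas showing $\Sigma_\lambda\to0$ and $\Omega$ positive definite), and your observation that $\{\sum_{l=1}^{m}\bar\beta_{\nu,n_l}\}_\nu$ is no longer uncorrelated across $\nu$ when the $n_l$ differ is accurate but already covered by the per-$\nu$ Cauchy--Schwarz bound $C\sum_\nu\E(\bar\beta_{\nu,n}^2)/(1+\lambda\rho_{0,\nu})^2$ that the $m=1$ proof uses, so no new idea is needed at that step. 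One caveat you inherit from the paper rather than introduce: to pass from the identifiability bound of Assumption~\ref{A3}$(ii)$, stated for $L_{l,\theta}$ and $L_{l,h}$, to a lower bound on your completed square in $D_\theta\eta_l$ and $D_h\eta_l$, you also need the norm equivalence of Assumption~\ref{A6}, which is not in the lemma's stated hypothesis list even though the paper's own positivity lemma for $\Omega$ invokes it.
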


\begin{theorem}
Under Assumptions \ref{A2} to \ref{A6}, \ref{A8}, and \ref{A9}, as ${n=\min_{1\leq l \leq m} \{n_l\} \rightarrow \infty}$ and $\lambda \rightarrow 0$, 
\[\sum_{l=1}^{m} V_{l,\tau_0}[L_{l,\theta} (\tilde{\theta} - \theta_0) + L_{l,h}(\tilde{h}-h_0)] + \lambda J(\tilde{h}-h_0) = O_{p}(n^{-1}\lambda^{-\frac{1}{r}} + \lambda).\]
\end{theorem}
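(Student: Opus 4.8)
The plan is to reproduce the single-sample argument (the proofs of Lemma~\ref{lemma1} and Theorem~\ref{theorem1}) with a sum over the $m$ groups carried through every display, bounding the expectation of the left-hand side and then invoking Markov's inequality. I would start from the closed-form expressions for $\tilde{\theta}$ and $\tilde{h}_\nu$ displayed immediately before the statement; these solve $D_{\theta}\sum_l\tilde{\mf{L}}_{n_l,\lambda}=0$ and $D_{h}\sum_l\tilde{\mf{L}}_{n_l,\lambda}=0$ once $h$ is expanded in the eigenbasis $\{\phi_{0,\nu}\}$ of Section~\ref{Spectral decomposition} taken at $(\theta_*,h_*)=\tau_0$ (which exists by Assumptions~\ref{A6} and~\ref{A8}), and they decompose $\tilde{\theta}-\theta_0$ and $\tilde{h}_\nu-h_{0,\nu}$ into a mean-zero stochastic part driven by $\sum_l\bar{\alpha}_{n_l}$ and $\sum_l\bar{\beta}_{\nu,n_l}$ (these are centered since the $X_{li}$ are distributed according to the true parameter and, by Assumption~\ref{A4}, $\eta_l$ is Fr\'echet differentiable at $\tau_0$), plus a deterministic shrinkage-bias part driven by $\lambda\rho_{0,\nu}h_{0,\nu}/(1+\lambda\rho_{0,\nu})$.

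For the stochastic part I would use $\E\|\sum_l\bar{\alpha}_{n_l}\|_{l^2}^2=O(n^{-1})$ and, since the $m$ groups are independent with $n_l^{-1}\le n^{-1}$, the bound $\E[(\sum_\nu a_\nu\sum_l\bar{\beta}_{\nu,n_l})^2]=\sum_l n_l^{-1}V_{l,\tau_0}(D_{h}\eta_l(\tau_0)(\sum_\nu a_\nu\phi_{0,\nu}))\le n^{-1}\sum_\nu a_\nu^2$ for any square-summable $(a_\nu)$, the last inequality using that $\{\phi_{0,\nu}\}$ is orthonormal with respect to $\sum_l V_{l,\tau_0}(D_{h}\eta_l(\tau_0)\cdot,D_{h}\eta_l(\tau_0)\cdot)$. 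Taking $a_\nu=(1+\lambda\rho_{0,\nu})^{-1}$ gives the $h$-variance term $n^{-1}\sum_\nu(1+\lambda\rho_{0,\nu})^{-2}=O(n^{-1}\lambda^{-1/r})$ by comparison with $\int_0^\infty(1+\lambda x^r)^{-2}\,dx\asymp\lambda^{-1/r}$, valid since $\rho_{0,\nu}\sim\nu^r$ with $r>1$ (Section~\ref{Spectral decomposition}, using Assumptions~\ref{A6},~\ref{A8},~\ref{A9}). The $h$-bias term is $\sum_\nu\lambda^2\rho_{0,\nu}^2 h_{0,\nu}^2/(1+\lambda\rho_{0,\nu})^2\le\tfrac{\lambda}{4}\sum_\nu\rho_{0,\nu}h_{0,\nu}^2=\tfrac{\lambda}{4}J(h_0)=O(\lambda)$, using $(1+x)^2\ge 4x$ and $J(h_0)<\infty$ (Assumption~\ref{A2}). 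Both inputs propagate into $\tilde{\theta}$ through the $p$-vectors $v_\nu\equiv\sum_l V_{l,\tau_0}[D_{\theta}\eta_l(\tau_0),D_{h}\eta_l(\tau_0)\phi_{0,\nu}]$, which satisfy $\sum_\nu\|v_\nu\|_{l^2}^2<\infty$ by Bessel's inequality in $\bigoplus_l L^2_0(\X_l)$, together with the uniform invertibility of $\Omega_\lambda$ discussed below; this yields $\E\|\tilde{\theta}-\theta_0\|_{l^2}^2=O(n^{-1}\lambda^{-1/r})$ and hence, via boundedness of $D_{\theta}\eta_l(\tau_0)$, $\E[\sum_l V_{l,\tau_0}(D_{\theta}\eta_l(\tau_0)(\tilde{\theta}-\theta_0))]=O(n^{-1}\lambda^{-1/r})$, as well as $\E[\sum_l V_{l,\tau_0}(D_{h}\eta_l(\tau_0)(\tilde{h}-h_0))+\lambda J(\tilde{h}-h_0)]=O(n^{-1}\lambda^{-1/r}+\lambda)$.

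To obtain the stated conclusion I would apply Assumption~\ref{A6} with $g=0$ and with $\zeta=0$ to get $\sum_l V_{l,\tau_0}(L_{l,\theta}\zeta)\le C_1^{-1}\sum_l V_{l,\tau_0}(D_{\theta}\eta_l(\tau_0)\zeta)$ and $\sum_l V_{l,\tau_0}(L_{l,h}g)\le C_1^{-1}\sum_l V_{l,\tau_0}(D_{h}\eta_l(\tau_0)g)$; bounding $V_{l,\tau_0}^{1/2}(L_{l,\theta}(\tilde{\theta}-\theta_0)+L_{l,h}(\tilde{h}-h_0))$ by $V_{l,\tau_0}^{1/2}(L_{l,\theta}(\tilde{\theta}-\theta_0))+V_{l,\tau_0}^{1/2}(L_{l,h}(\tilde{h}-h_0))$ and squaring then shows the quantity in the statement is at most a fixed multiple of $\sum_l V_{l,\tau_0}(D_{\theta}\eta_l(\tau_0)(\tilde{\theta}-\theta_0))+\sum_l V_{l,\tau_0}(D_{h}\eta_l(\tau_0)(\tilde{h}-h_0))+\lambda J(\tilde{h}-h_0)$, whose expectation is $O(n^{-1}\lambda^{-1/r}+\lambda)$ by the bounds above; Markov's inequality upgrades this to the claimed $O_{p}(n^{-1}\lambda^{-1/r}+\lambda)$.

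The step I expect to be the main obstacle, and the only place the semiparametric (as opposed to purely nonparametric) structure genuinely intervenes, is the uniform invertibility of $\Omega_\lambda$. For $\|\zeta\|_{l^2}=1$, an orthogonal-projection argument in $\bigoplus_l L^2_0(\X_l)$, identical to the $m=1$ case, gives $\zeta^{T}\Omega_\lambda\zeta\ge\inf_{g\in\CH}\sum_l V_{l,\tau_0}(D_{\theta}\eta_l(\tau_0)\zeta-D_{h}\eta_l(\tau_0)g)\ge C_1\inf_{g\in\CH}\sum_l V_{l,\tau_0}(L_{l,\theta}\zeta-L_{l,h}g)\ge C_1 c_\delta>0$, the middle inequality by Assumption~\ref{A6} and the last by the identifiability condition Assumption~\ref{A3}(ii), so $\|\Omega_\lambda^{-1}\|$ stays bounded as $\lambda\to0$. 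Everything else is bookkeeping: every display acquires $\sum_{l=1}^m$, cross-group terms drop by independence, and because $m$ is a fixed constant no rate changes; the analogues of Theorem~\ref{theorem2}, Corollaries~\ref{cor1}--\ref{cor2}, and Theorem~\ref{SKL} for $m\ge1$ then follow along the identical route, as the text asserts.
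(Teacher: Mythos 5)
Your proof is correct and follows the paper's route in its essentials: expand $\tilde h$ in the eigenbasis $\{\phi_{0,\nu}\}$ of Section~\ref{Spectral decomposition}, separate $\tilde\theta-\theta_0$ and $\tilde h_\nu-h_{0,\nu}$ into centered stochastic and shrinkage-bias parts, bound expectations term by term using independence across groups and the spectral sums ($\sum_\nu(1+\lambda\rho_{0,\nu})^{-2}=O(\lambda^{-1/r})$, the $\rho_{0,\nu}h_{0,\nu}^2$ bound from $J(h_0)<\infty$), propagate into $\tilde\theta$ through $\Omega_\lambda^{-1}$, transfer from $D_\theta\eta_l, D_h\eta_l$ to $L_{l,\theta}, L_{l,h}$ via Assumption~\ref{A6}, and finish with Cauchy--Schwarz and Markov. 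Where you genuinely differ from the paper is the uniform invertibility of $\Omega_\lambda$: the paper introduces the Riesz representer $G=[G^k]$, writes $\Omega_\lambda=\Omega+\Sigma_\lambda$, proves $\Sigma_\lambda\to 0$ (Lemma~A.3) and a uniform eigenvalue lower bound on $\Omega$ (Lemma~A.4); you instead lower-bound $\zeta^T\Omega_\lambda\zeta$ directly by the orthogonal-projection distance $\inf_{g\in\CH}\sum_l V_{l,\tau_0}(D_\theta\eta_l(\tau_0)\zeta-D_h\eta_l(\tau_0)g)$ using $(1+\lambda\rho_{0,\nu})^{-1}\le 1$, then apply Assumptions~\ref{A6} and~\ref{A3}$(ii)$. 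Your route avoids introducing $G$ and the separate $\Sigma_\lambda\to 0$ argument and is the cleaner of the two; what it does not buy you is the explicit form of $\Omega$, which the paper reuses in its remainder-term machinery elsewhere in the appendix.

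One small wording slip: the $h$-variance quantity you must bound is $\E\bigl[\sum_\nu\bigl(\tfrac{\sum_l\bar\beta_{\nu,n_l}}{1+\lambda\rho_{0,\nu}}\bigr)^2\bigr]$, a sum of squares, whereas taking $a_\nu=(1+\lambda\rho_{0,\nu})^{-1}$ in your displayed linear-combination bound controls $\E\bigl[\bigl(\sum_\nu a_\nu\sum_l\bar\beta_{\nu,n_l}\bigr)^2\bigr]$, the square of a sum, which is a different (smaller) quantity. The correct move, consistent with what the paper does, is to apply your bound with $a=e_\nu$ for each fixed $\nu$ to get $\E[(\sum_l\bar\beta_{\nu,n_l})^2]\le n^{-1}$, and then sum against the weights $(1+\lambda\rho_{0,\nu})^{-2}$; the final estimate $n^{-1}\sum_\nu(1+\lambda\rho_{0,\nu})^{-2}=O(n^{-1}\lambda^{-1/r})$ is unchanged. (The linear-combination form is exactly what you do need for the $\bar\beta$ contribution to $\tilde\theta$, so keep both versions in hand.)
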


\begin{theorem}
\label{theorem5}
Under Assumptions \ref{A2} to \ref{A9}, as ${n =\min_{1\leq l \leq m} \{n_l\} \rightarrow \infty}$, $\lambda \rightarrow 0$, and $n\lambda^{\frac{1}{r}} \rightarrow \infty$,
\[\sum_{l=1}^{m} V_{l,\tau_0} [ L_{l,\theta} (\hat{\theta} - \tilde{\theta}) + L_{l,h}(\hat{h}-\tilde{h} )] + \lambda J(\hat{h}-\tilde{h}) = O_{p}\left(n^{-1}\lambda^{-\frac{1}{r} }+ \lambda\right).\]
Therefore,
\[\sum_{l=1}^{m}V_{l,\tau_0} [ L_{l,\theta} (\hat{\theta} - \theta_0 )+ L_{l,h}(\hat{h}-h_0 )] + \lambda J(\hat{h}-h_0) = O_{p}\left(n^{-1}\lambda^{-\frac{1}{r} }+ \lambda\right).\]
\end{theorem}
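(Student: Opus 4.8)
The plan is to replay the proof of Theorem~\ref{theorem2} with every scalar quantity replaced by its aggregated ($\sum_{l=1}^{m}$) counterpart, exploiting that Assumptions~\ref{A6}, \ref{Ad}, and \ref{A7} are already stated in summed form and that $m$ is fixed. Concretely, first I would introduce $A^{\theta}_{\tau_1,\tau_2}$, $A^{h}_{\tau_1,\tau_2}$, $B^{\theta}_{\tau_1,\tau_2}$, $B^{h}_{\tau_1,\tau_2}$ exactly as in Section~\ref{approximation error}, but with $\mf{L}_n$ replaced by $\sum_{l=1}^{m}\mf{L}_{n_l}$ and $\tilde{\mf{L}}_n$ replaced by $\sum_{l=1}^{m}\tilde{\mf{L}}_{n_l}$; the partial derivatives \eqref{dAp}--\eqref{dBn} then acquire an outer sum over $l$ (and inner average over $i=1,\dots,n_l$), and they vanish at $\hat{\tau}=(\hat{\theta},\hat{h})$ for the $A$-functionals and at $\tilde{\tau}=(\tilde{\theta},\tilde{h})$ for the $B$-functionals by the defining systems of $\hat{\tau}$ and $\tilde{\tau}$ (recall that $\sum_{l}\mf{L}_{n_l}$ is assumed convex with a unique local minimizer and that $\sum_{l}\tilde{\mf{L}}_{n_l,\lambda}$ is convex). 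Setting $\tau_1=\hat{\tau}$, $\tau_2=\hat{\tau}-\tilde{\tau}$ in $\dot{A}^{\theta}+\dot{A}^{h}$ and $\tau_1=\tilde{\tau}$, $\tau_2=\hat{\tau}-\tilde{\tau}$ in $\dot{B}^{\theta}+\dot{B}^{h}$, then equating and rearranging, produces the $m$-sample analogue of equation \eqref{III}, in which every term is a sum over $l$ of the corresponding single-sample quantity.

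Then I would bound both sides. For the first two terms of the left-hand side, apply the mean value identity \eqref{MVT} within each sample $l$ and invoke Assumptions~\ref{A6}, \ref{Ad}, and \ref{A7} in summed form, mimicking the chain of inequalities in the proof of Theorem~\ref{theorem2}; together with the $\lambda J(\hat{h}-\tilde{h})$ term carried over on the left, this gives a lower bound $\tfrac{c_3}{2}\sum_{l=1}^{m} V_{l,\tau_0}[L_{l,\theta}(\hat{\theta}-\tilde{\theta})+L_{l,h}(\hat{h}-\tilde{h})]$ for some $0<c_3\le C_3(2C_1-C_d)$. On the right-hand side, the first term is controlled by the Cauchy--Schwarz inequality for the nonnegative symmetric bilinear form $\sum_{l} V_{l,\tau_0}(\cdot,\cdot)$ together with the preceding theorem (the $m$-sample analogue of Theorem~\ref{theorem1}), which bounds $\sum_{l} V_{l,\tau_0}[D_{\theta}\eta_l(\tau_0)(\tilde{\theta}-\theta_0)+D_{h}\eta_l(\tau_0)(\tilde{h}-h_0)]$ by $O_p(n^{-1}\lambda^{-1/r}+\lambda)$; the second term by Cauchy--Schwarz together with Assumptions~\ref{A6} and \ref{A7}; and the last two stochastic terms by applying, for each fixed $l$, the elementary variance bound $n_l^{-1}\sum_{i=1}^{n_l}f(X_{li})-\E[f(X)]=O_p(n_l^{-1/2})\Var^{1/2}[f(X)]$ and summing the $m$ contributions, using $n_l\ge n$ so each is $O_p(n^{-1/2})$ and Assumption~\ref{A6} to pass back from $\sum_l V_{l,\tau_0}[D\eta_l(\cdot)(\cdot)]$ to $\sum_l V_{l,\tau_0}[L_{l,\cdot}(\cdot)]$.

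Collecting the estimates produces an inequality of the form $\tfrac{c_3}{2}Z+\lambda J(\hat{h}-\tilde{h})\le O_p(n^{-1/2}\lambda^{-1/(2r)}+\lambda^{1/2})\,Z^{1/2}$ with $Z=\sum_{l=1}^{m}V_{l,\tau_0}[L_{l,\theta}(\hat{\theta}-\tilde{\theta})+L_{l,h}(\hat{h}-\tilde{h})]$, hence $Z+\lambda J(\hat{h}-\tilde{h})=O_p(n^{-1}\lambda^{-1/r}+\lambda)$, which is the first display. The second display then follows from the triangle inequality for the norm $\norm{\cdot}_{\Q}$ of \eqref{inner_product} together with the preceding theorem applied to $\tilde{\tau}-\tau_0$. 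As in the case $m=1$, one first passes to the event $\{\tilde{\tau},\hat{\tau}\in\N_{\theta_0}\times\N_{h_0}\}$, whose probability tends to $1$ under the stated rate conditions. I expect the only non-mechanical point to be confirming that $\sum_{l}V_{l,\tau_0}(\cdot)$ genuinely plays the role of a squared norm in the Cauchy--Schwarz and fluctuation steps, which is exactly what Assumption~\ref{A3}(i) provides, and that summing finitely many $O_p(n^{-1/2})$ error terms does not degrade the rate, which is immediate since $m$ is fixed; beyond that the argument is line-for-line the proof of Theorem~\ref{theorem2}.
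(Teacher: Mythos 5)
Your proposal is correct and matches the paper's intended argument: the paper gives no separate proof of Theorem \ref{theorem5}, stating only that the $m\ge 1$ results follow by straightforward extension of Sections \ref{Linear approximation} and \ref{approximation error}, and your replay of the Theorem \ref{theorem2} proof with $\sum_{l=1}^m$ aggregates in place of scalars, using that Assumptions \ref{A6}, \ref{Ad}, \ref{A7} are already stated in summed form, the mean value identity within each sample, and $n_l\ge n$ with fixed $m$ to collect the finitely many $O_p(n^{-1/2})$ fluctuation terms, is precisely that extension.
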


\begin{theorem}Under Assumptions \ref{A2} to \ref{A9}, as ${n =\min_{1\leq l \leq m} \{n_l\} \rightarrow \infty}$, $\lambda \rightarrow 0$, and $n\lambda^{\frac{1}{r}} \rightarrow \infty$,
\[\SKL(\tau_0, \hat{\tau})= \sum_{l=1}^{m} \brac{\mu_{l,\tau_0}[\eta(\theta_0,h_0) - \eta(\hat{\theta},\hat{h})] + \mu_{l,\hat{\tau}}[\eta(\hat{\theta},\hat{h}) - \eta(\theta_0,h_0)]}=O_{p}\left(n^{-1}\lambda^{-\frac{1}{r} }+ \lambda\right).\]
\end{theorem}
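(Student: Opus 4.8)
The plan is to follow the proof of Theorem~\ref{SKL} for the single-sample case, carrying an extra sum over $l$ through each step and invoking the $m$-sample results of Section~\ref{result for general m} (in particular Theorem~\ref{theorem5}) in place of their $m=1$ analogues.

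First I would linearize each $\eta_l$ about $\tau_0=(\theta_0,h_0)$. By Assumption~\ref{A4} ($\eta_l$ is even three times continuously Fréchet differentiable on $\N_{\theta_0}\times\N_{h_0}$), we may write, for $l=1,\dots,m$,
\[\eta_l(\hat\theta,\hat h)=\eta_l(\theta_0,h_0)+D_\theta\eta_l(\tau_0)(\hat\theta-\theta_0)+D_h\eta_l(\tau_0)(\hat h-h_0)+R_l,\qquad \norm{R_l}_{L^2_0(\X_l)}=o\bigl(\norm{(\hat\theta-\theta_0,\hat h-h_0)}_{\Q}\bigr),\]
which is legitimate because Theorem~\ref{theorem5} gives $\norm{(\hat\theta-\theta_0,\hat h-h_0)}_{\Q}^2=\sum_{l}V_{l,\tau_0}[L_{l,\theta}(\hat\theta-\theta_0)+L_{l,h}(\hat h-h_0)]+\lambda J(\hat h-h_0)=O_p(n^{-1}\lambda^{-1/r}+\lambda)\to0$, so $\hat\tau\in\N_{\theta_0}\times\N_{h_0}$ with probability tending to $1$. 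Substituting this expansion into the definition \eqref{SKL_def} and collecting terms (the linear part enters $\mu_{l,\tau_0}$ and $\mu_{l,\hat\tau}$ with opposite signs) yields
\[\SKL(\tau_0,\hat\tau)=\sum_{l=1}^{m}\bigl(\mu_{l,\tau_0}[g_l]-\mu_{l,\hat\tau}[g_l]\bigr)+\sum_{l=1}^{m}\bigl(\mu_{l,\hat\tau}[R_l]-\mu_{l,\tau_0}[R_l]\bigr),\quad g_l\equiv D_\theta\eta_l(\tau_0)(\theta_0-\hat\theta)+D_h\eta_l(\tau_0)(h_0-\hat h).\]

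Next I would bound the leading sum. For each $l$, the $\eta_l$-analogue of identity \eqref{MVT} with $\tau_1=\hat\tau$, $\tau_2=\tau_0$ produces a point $\tau_{*,l}$ on the segment from $\hat\tau$ to $\tau_0$ with $\mu_{l,\tau_0}[g_l]-\mu_{l,\hat\tau}[g_l]=V_{l,\tau_{*,l}}\bigl[g_l,\,D_\theta\eta_l(\tau_{*,l})(\theta_0-\hat\theta)+D_h\eta_l(\tau_{*,l})(h_0-\hat h)\bigr]$. Applying the Cauchy--Schwarz inequality for $V_{l,\tau_{*,l}}$, then Assumption~\ref{A7} (since $\N_{\theta_0}\times\N_{h_0}$ is convex, $\tau_{*,l}$ lies in it, so $V_{l,\tau_{*,l}}(\cdot)\le C_4 V_{l,\tau_0}(\cdot)$), then Assumption~\ref{A6} (to pass from $D_\theta\eta_l,D_h\eta_l$ to $L_{l,\theta},L_{l,h}$), and finally the discrete Cauchy--Schwarz inequality over $l$, I get
\[\sum_{l=1}^{m}\bigl(\mu_{l,\tau_0}[g_l]-\mu_{l,\hat\tau}[g_l]\bigr)\le C_2C_4\sum_{l=1}^{m}V_{l,\tau_0}\bigl[L_{l,\theta}(\theta_0-\hat\theta)+L_{l,h}(h_0-\hat h)\bigr]=O_p\bigl(n^{-1}\lambda^{-1/r}+\lambda\bigr)\]
by Theorem~\ref{theorem5}. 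The remainder sum is handled analogously: \eqref{MVT} plus Cauchy--Schwarz express each $\mu_{l,\hat\tau}[R_l]-\mu_{l,\tau_0}[R_l]$ as a product of $V_{l,\cdot}^{1/2}[R_l]$ — bounded by $\norm{R_l}_{L^2_0(\X_l)}=o(\norm{\hat\tau-\tau_0}_{\Q})$ — and $V_{l,\cdot}^{1/2}[D_\theta\eta_l(\cdot)(\hat\theta-\theta_0)+D_h\eta_l(\cdot)(\hat h-h_0)]$, which summed over $l$ is $O_p(\norm{\hat\tau-\tau_0}_{\Q})$ by Assumptions~\ref{A6}--\ref{A7}; hence this sum is $o_p(\norm{\hat\tau-\tau_0}_{\Q}^2)=o_p(n^{-1}\lambda^{-1/r}+\lambda)$. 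Combining the two bounds gives the asserted rate.

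I expect the only non-mechanical point to be this last one: one must check that the Taylor remainders $R_l$, after integration against the two measures $\mu_{l,\tau_0}$ and $\mu_{l,\hat\tau}$, contribute only at lower order than $\sum_l V_{l,\tau_0}[L_{l,\theta}(\hat\theta-\theta_0)+L_{l,h}(\hat h-h_0)]$. This is precisely where the mean value identity \eqref{MVT}, Assumptions~\ref{A6}--\ref{A7}, and the equivalence of $\norm{\cdot}_{\Q}$ with the product norm $\norm{\cdot}_{\Q,1}$ (Remark~\ref{remark1}) are used together — exactly as in the $m=1$ argument — while the rest of the proof merely carries the extra $\sum_{l=1}^{m}$ through unchanged.
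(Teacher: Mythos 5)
Your proof is correct and follows essentially the same route as the paper's proof of Theorem \ref{SKL}: first-order Fr\'echet--Taylor expansion of each $\eta_l$ at $\tau_0$, the mean-value identity \eqref{MVT} to convert $\mu_{l,\tau_0}-\mu_{l,\hat\tau}$ into a covariance $V_{l,\cdot}$, the Cauchy--Schwarz inequality, Assumptions \ref{A6}--\ref{A7} to pass from $D_\theta\eta_l,D_h\eta_l$ at intermediate points to $L_{l,\theta},L_{l,h}$ at $\tau_0$, and Theorem \ref{theorem5} for the rate. The only (welcome) difference is that you track the Taylor remainders $R_l$ explicitly and verify that their contribution is $o_p$ of the leading term, where the paper's $m=1$ proof compresses this step into a ``$\rightarrow$''.
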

\end{subsection}

\section{ Simulations}\label{sec::semiSimulation}
To evaluate the proposed estimation procedures and algorithms, we conduct several simulations for both additive and nonadditive cases. We use the KL divergence between the true density and the estimated density to evaluate the  performance of density estimation, $\text{KL}(f,\hat{f})=\int_{\cal X} f(x) \log \{ f(x)/\hat{f}(x)\} dx$. 
Furthermore, we will use the generalized decomposition
\begin{equation}
\hbox{E}\{\hbox{KL}(f,\hat{f})\}=\hbox{KL}(f,\bar{f})+ 
\hbox{E}\{\hbox{KL}(\bar{f},\hat{f})\}
=\hbox{bias }+\hbox{ variance}
\label{KLDdecomp}
\end{equation}
proposed by \citeasnoun{heskes1998bias} 
to evaluate the bias-variance trade-off 
where $\bar{f}=\exp\{\hbox{E}(\log \hat{f})\}/Z$ and $Z$ is a normalization 
constant. 

We emphasize that in all the following tables, all numbers are in $10^{-2}$.  The numbers in the parentheses for the KL measures are the biases and the variances in the decomposition of the KL distances. The numbers in the parentheses for the mean squared errors (MSEs) are the squared biases and the variances of the parameter estimators.

\subsection{Additive cases}
\subsubsection{Near Normal distribution}
Consider  the density function
$$
f(x;\mu,\sigma)=\frac{ \exp\{-\frac{x^2}{2\sigma^2}+\frac{\mu x}{\sigma^2} + ax^3\}}{\int_0^1  \exp\{-\frac{x^2}{2\sigma^2}+\frac{\mu x}{\sigma^2} + ax^3\}dx}
$$
where $x\in [0,1]$, $\alpha(x;\mu,\sigma)=-\frac{x^2}{2\sigma^2}+\frac{\mu x}{\sigma^2}$, $h(x)=ax^3$ and $a$ is a constant. The function $\alpha(x;\mu,\sigma)$ is the logistic transformation of the truncated  Normal density function with mean $\mu$ and standard deviation $\sigma$. The constant $a$ controls the departure from the truncated Normal distribution. The choice of this density function is motivated by \citeasnoun{hjort1995nonparametric}, with a truncated Normal as the starting parametric density. 
We will consider the  additive model \eqref{additive} with $\alpha(x;\mu,\sigma)=-\frac{x^2}{2\sigma^2}+\frac{\mu x}{\sigma^2}$
and $h \in W_2^3[0,1] \ominus \{1, x, x^2\}$, where $W_2^s[0,1]$ is the $s$th  order Sobolev space on $\CX=[0,1]$ defined 
as 
\[W_2^s[0,1]= \lbrace g: \X \rightarrow \R \mid\  g^{(j)} \text{ is absolutely continuous, for }j = 0, 1, \ldots, s-1\}.\]
We consider the third order Sobolev space because the null space with quadratic functions corresponds to the probability density functions of the truncated Normal distributions. This null space has been removed from the model space of $h$ to make the model identifiable.  
We use the   profile likelihood approach in Section \ref{sec::additive} to compute estimates of $\bftheta=(\mu,\sigma)$ and $h$. 

For comparison, we also implemented the nonparametric estimation methods based on kernels and polynomial splines. Given a random sample $X_1, \ldots, X_n$ that has density $f(x)$ on $[0,1]$, the polynomial spline based nonparametric estimation method estimates the logistic transformation of $f$, $\eta$, by minimizing the penalized likelihood
$$
-\dfrac{1}{n}\sum_{i=1}^{n}\eta(X_{i}) + \log\int_0^1 e^{\eta}dx + \dfrac{\lambda}{2}\int_0^1(\eta^{(m)})^2 dx, \label{polyPLF}
$$
over $\eta \in W_2^s[0,1]$. We implemented the cubic spline ($s=2$) and the quintic spline ($s=3$).
In the implementation of kernel density estimation and the method by \citeasnoun{hjort1995nonparametric}, we use the Gaussian kernel and compare different methods for bandwidth selection, including the approach in \citeasnoun{scott1992curse}, the unbiased and biased cross-validation procedures,  and the method using pilot estimation of derivatives in  \citeasnoun {sheather1991reliable}.
We find that the bandwidth obtained by
\begin{equation}
h = 0.9 n^{-1/5}\min \{\text{sample standard deviation, sample inter quantile range/1.34}\},\label{kernelBw}
\end{equation}
provides the best overall performance. We report results with bandwidth determined by \eqref{kernelBw} only. 

We set $\mu=0.5$ and $ \sigma=0.2$. We consider three choices of $a$: $a=0.25,1,4$, and three sample sizes, $n=100,200$ and $500$. For each simulation setting, we generate 100 simulation data sets.  For each simulated data set, we compute density estimates using the kernel method, the method by \citeasnoun{hjort1995nonparametric} (HG), cubic spline, quintic spline and our proposed semiparametric (SEMI) method. 
Table \ref{CompareNormalDen} summarizes the performances of these methods in terms of the density estimation.
As expected,  the performance improves as sample size increases, and the semiparametric and quintic spline estimates have the smallest KL than other nonparametric estimates. Our semiparametric approach performs better than the semiparametric approach in \citeasnoun{hjort1995nonparametric}. There is virtually no difference between the quintic spline approach. This is expected since they are fitting the same model using different methods. 

\begin{table}[H]
\center
\begin{tabular}{ccc|c|c}
\hline
$a$&Method& KL & KL &KL\\
\hline
&&$n=100$&$n=200$&$n=500$\\\hline
\multirow{5}{*}{0.25}& Kernel & 2.56(0.04, 2.52) &1.65(0.03, 1.62)& 0.96(0.03, 0.93)   \\
& HG & 2.48(0.47, 2.01) &  1.21(0.18, 1.03)& 0.62(0.12, 0.50)\\ 
 & Cubic & 1.53(0.31, 1.22)& 0.81(0.21,  0.60)&   0.41(0.11, 0.30)\\ 
&Quintic & 1.08(0.04, 1.04) &0.43(0.01, 0.42 )&0.20(0.00, 0.20)    \\ 
&  SEMI & 1.08(0.04, 1.04) & 0.43(0.01, 0.42)& 0.20(0.00, 0.20)\\ 
\hline
\multirow{5}{*}{1} 
&  Kernel & 2.82(0.04, 2.78) &1.84(0.07, 1.77)  &1.25(0.04, 1.21)   \\ 
&  HG & 2.30(0.14, 2.16) &1.38(0.43, 0.95)   &  0.67(0.11, 0.56)  \\ 
&  Cubic & 1.51(0.28, 1.23) & 0.80(0.15, 0.65) &  0.45(0.12, 0.33)\\ 
&Quintic & 0.93(0.03, 0.90) &0.46(0.02, 0.44)&0.22(0.01, 0.21) \\ 
 & SEMI  & 0.93(0.03, 0.90) &0.47(0.02, 0.45)&0.24(0.01, 0.23)   \\ 
\hline
\multirow{5}{*}{4}
&  Kernel & 7.79(0.83, 6.96) &  6.24(0.74, 5.50)& 4.78(0.67, 4.11) \\ 
&  HG & 4.32(2.19, 2.13) &  2.77(1.73, 1.04)&  1.78(1.29, 0.49)\\ 
 & Cubic & 1.54(0.27, 1.27) &  0.79(0.18, 0.61)&  0.34(0.07, 0.27)\\ 
&Quintic & 1.27(0.19, 1.08)& 0.63(0.14 ,0.49)&0.33(0.15, 0.18) \\ 
 & SEMI  & 1.27(0.19, 1.08) &0.64(0.15, 0.49)&  0.35(0.16, 0.19)\\ 
\hline
\end{tabular}
\caption{Kullback-Leibler distances of different methods. Numbers inside parentheses of KLs are bias and variances. All numbers are in $10^{-2}$.}\label{CompareNormalDen}
\end{table}

\subsubsection{Near Gumbel distribution}\label{sec::linearGum}
Consider the density function
$$
f(x;\mu,\sigma)=\frac{ \exp\{-\frac{x-\mu}{\sigma}-\exp(\frac{x-\mu}{\sigma}) + ax^3\}}{\int_0^1   \exp\{-\frac{x-\mu}{\sigma}-\exp(\frac{x-\mu}{\sigma}) + ax^3\}dx}
$$
where $x\in \CX=[0,1]$, $\alpha(x;\mu,\sigma)=-{(x-\mu)}/{\sigma}-\exp({(x-\mu)}/{\sigma})$, $h(x)=ax^3$, and $a$ is a constant. This is equivalent to start with a truncated Gumbel distribution whose logistic transformation is $\alpha(x;\mu,\sigma)$, and the constant $a$ controls the departure from the truncated Gumbel distribution.  We will consider the  additive model \eqref{additive} with $\alpha(x;\mu,\sigma)=-{(x-\mu)}/{\sigma}-\exp({(x-\mu)}/{\sigma})$ and $h \in W_2^2[0,1] \ominus \{1\}$. Note that $\alpha$ is non-linear in $\bftheta$. Therefore, the method in \citeasnoun{yang2009penalized} does not apply.  We compare our proposed method with the kernel, cubic spline and HG's method, where $f_0$ in the HG's approach is a truncated Gumbel distribution. We use the profile likelihood approach as described in Section \ref{sec::additive}  to compute estimates of $\bftheta=(\mu,\sigma)$ and $h$.

We set $\mu=0.5$ and $\sigma=0.2$. We consider three choices of $a$: $a=0.25,1,4$,  and three sample sizes $n=100,200,500$. For each simulation setting, we generate 100 simulated data sets. Table \ref{CompareGumbelDen}  shows that the semiparametric methods has smaller KL distances when the true density is close to truncated Gumbel (i.e. $a=0.25$ and $1$). When the true density if far away from the  truncated Gumbel ($a=4$), cubic spline has smaller KL distances.

\begin{table}[H]
\center
\begin{tabular}{ccc|c|c}
\hline
$a$&Method& KL & KL &KL\\
\hline
&&$n=100$&$n=200$&$n=500$\\\hline
\multirow{3}{*}{0.25}& Kernel & 3.50(0.41, 3.09) & 2.59(0.36, 2.23)     &  1.72(0.21, 1.51)   \\ 
&  HG & 3.20(1.12, 2.08) & 1.89(0.68, 1.21) &  1.10(0.49, 0.61)  \\ 
&  Cubic & 2.12(0.59, 1.53) &   1.32(0.42, 0.90)  & 0.63(0.19, 0.44)    \\ 
&  SEMI & 1.58(0.16, 1.42) &  0.64(0.01, 0.63) &  0.45(0.04, 0.41) \\ \hline
\multirow{ 3}{*}{1}&Kernel & 4.32(0.47, 3.85) & 3.22(0.49, 2.73)   & 2.35(0.31, 2.04)   \\ 
&  HG & 3.07(1.00, 2.07) &   2.04(1.02, 1.02) & 1.19(0.61, 0.58)    \\ 
&  Cubic & 2.17(0.69, 1.48) &   1.21(0.52, 0.69) &   0.64(0.21, 0.43)  \\ 
&  SEMI & 1.28(0.11, 1.17) &  0.64(0.06, 0.58) & 0.45(0.05, 0.40) \\ \hline
\multirow{ 3}{*}{4}& Kernel & 11.47(1.86, 9.61) &   9.68(1.88, 7.80) &  7.68(1.53, 6.15)  \\ 
&  HG & 7.83(5.85, 1.98) &  5.27(4.08, 1.19)  &  3.50(3.04, 0.46)  \\ 
&  Cubic & 2.00(0.65, 1.35) &  1.08(0.37, 0.71)  & 0.52(0.19, 0.33)     \\ 
&  SEMI & 2.33(0.58, 1.75) & 1.30(0.32, 0.98)& 0.66(0.13, 0.53) \\ \hline
\end{tabular}
\caption{Kullback-Leibler distances of different methods. Numbers inside parentheses of KLs are bias and variances. All numbers are in $10^{-2}$.}\label{CompareGumbelDen}
\end{table}

\subsection{Nonadditive cases}
\subsubsection{Power transformation}\label{sec::powerTrans}
The truncated Weibull distribution has density function
\begin{equation}
f(x;s,  \gamma)=\frac{ \gamma}{s}\left(\frac{x}{s}\right)^{ \gamma-1}\exp\left\lbrace -\left(\frac{x}{s} \right)^ \gamma\right\rbrace,
\end{equation}
where $x\in \CX=[0,1]$, $s>0$ is the scale parameter and $ \gamma>0$ is the shape parameter. Since $Y=X^\gamma \sim \text{truncated Exp} (s)$, we consider the transformation model in Section \ref{sec::powEst} with 
\begin{equation}
t(x;\theta)=x^{\theta} ,\quad \theta>0,
\label{PowerTrans}
\end{equation}
and $h \in W_2^2[0,1] \ominus \{1\}$. The algorithm in Section \ref{sec::powEst} is used to compute the estimates of $\theta$ and $h$. We also compute the cubic spline estimate of the density function (Cubic) and the  kernel density estimation after transformation (TransKernel) \cite{wand1991transformations}, where the transformation form is known as in (\ref{PowerTrans}) up to the parameter $\theta$.
In the simulations, we set $s=1$, and consider $ \gamma=1,2, 3$ where $ \gamma=1$ corresponding to the truncated exponential distribution for which the cubic spline is well-suited since the null space corresponds to the truncated exponential density.  We consider two sample sizes $n= 100$ and $200$. For each simulation setting, we generate 100 data sets. The simulation results are shown in Table \ref{Power100200}. 
As expected, cubic spline performs best when $ \gamma=1$. Our semiparametric approach provides  smaller  KL  distances in other cases. As sample size increases, both the performances of density and parameter estimation improved, except for the transformation kernel density estimation, the biases of the parameter estimates actually increased although the variances reduced, leading to slightly larger MSEs.

\subsubsection{Two-sample density estimation}
The Gumbel and logistic distributions are members of  location scale family with the location parameter $\mu$ and scale parameter $\sigma$. They have been extensively used in many different areas.


\begin{table}[H]
\center
\setlength{\tabcolsep}{3pt}
\begin{tabular}{cc|cc|cc}

\hline
$ \gamma$&Method& KL &MSE ($\hat\gamma$) &KL &MSE ($\hat\gamma$)\\
\hline
&&\multicolumn{2}{c|}{$n=100$}&\multicolumn{2}{c}{$n=200$}\\
\hline
\multirow{ 3}{*}{1}& Cubic & 0.65(0.01, 0.64)& &0.43(0.01, 0.42)& \\ 
& TransKernel&1.87(0.69, 1.18)&13.40(11.44, 1.96)&1.57(0.74, 0.83)&15.95(15.05, 0.90) \\
&SEMI& 1.09(0.02, 1.07)&3.07(0.01, 3.06)&0.61(0.00, 0.61)&1.33(0.01, 1.32)\\
\hline
\multirow{ 3}{*}{2}&Cubic&1.90(0.67, 1.23)& & 1.26(0.43, 0.83)&\\ 
& TransKernel&2.05(0.52, 1.53)&58.35(50.44, 7.91)&1.64(1.04, 0.60)&61.80(58.02, 3.78)\\
& SEMI &0.94(0.02, 0.92)&8.63(0.17, 8.46)&0.62(0.01, 0.61)&6.02(0.18, 5.84)\\ 
 \hline
\multirow{ 3}{*}{3}& Cubic &1.72(0.55, 1.17)&  &0.96(0.35, 0.61) &\\ 
 & TransKernel&1.92(0.74, 1.18)&123.05(106.61, 16.44)&1.37(0.71, 0.66)&135.24(128.98, 6.26)\\
 &SEMI&0.95(0.03, 0.92)&21.07(0.48, 20.59)&0.48(0.01, 0.47)&10.75(0.34, 10.41)\\ 
\hline
\end{tabular}
\caption{Kullback-Leibler distances and mean squared errors of different methods. Numbers inside parentheses of KLs are bias and variances. Numbers inside parentheses of MSEs are squared biases and variances. All numbers are in $10^{-2}$.}\label{Power100200}
\end{table}

\noindent
We generate two independent samples, $X_1,\ldots,X_{n_1}\overset{iid}{\sim} f(x;0,1)$ and $Y_1,\ldots,Y_{n_2}\overset{iid}{\sim}f(x;\mu,\sigma)$ where $f$ is either the Gumbel distribution or the logistic distribution.  We set $\mu =2$, $\sigma=1$, and consider four sample sizes $(n_1, n_2)=(100,100),(100,200),(200,100)$ and $(200,200)$. We fit model \eqref{nonadditive} with $\alpha_1(x;\theta)=x$ and $\alpha_2(x;\theta)=\frac{x-\mu}{\sigma}$ and $h$ belongs to the RKHS for the univariate thin-plate splines with $\CX=\mathbb{R}$ and
\begin{equation}
\CH=\left\lbrace h:\int_{-\infty}^{\infty}(h^{\prime\prime})^2dx<\infty \right\rbrace\ominus {1}. \label{TPspace}
\end{equation}
We estimate the density functions using the procedure described in Section \ref{sec::powEst}, and report KL$(f(\cdot;0,1),\hat{f}(\cdot;0,1))+\text{KL}(f(\cdot;\mu,\sigma),\hat{f}(\cdot;\hat{\mu},\hat{\sigma}))$ for the overall performance of density estimation (overall KL). For comparison, we consider the characteristic function based two sample transformation density estimation method (CHAR) in \citeasnoun{potgieter2012nonparametric}, and another method which estimates the densities for $X_i$'s and $Y_i$'s using thin-plate spline models separately (TP), with logistic of densities belong to $ \CH$ in \eqref{TPspace}. Denote the separate thin-plate estimates for $X$ and $Y$ samples as $\hat{f}_1$ and $\hat{f}_2$ respectively. We report KL$( f(\cdot;0,1),\hat{f}_1)+\text{KL}(f(x;\mu,\sigma),\hat{f}_2)$ for the overall performance (overall KL) of the density estimation. For the method CHAR and our method SEMI, we report the MSEs, squared biases and variances as measures for the performance of parameter estimations. We report $\hbox{KL}_s=\hbox{KL}( f(\cdot;0,1),\exp(\hat{h})/\int \exp(\hat{h})dx)$ as a measure of the performance for estimating the function $h$. \\
\noindent \textbf{Gumbel distribution}
The Gumbel distribution has the density 
$
f(x)={\frac  {1}{\sigma }}e^{{-( {(x-\mu) }/{\sigma}}+e^{{-{(x-\mu)}/{\sigma }}})}$, $\quad x\in\mathbb{R}$.
Table \ref{TwoGumbel} shows although that the CHAR method estimates the parameters slightly better, our semiparametric method provides better density 
estimation than the method in \citeasnoun{potgieter2012nonparametric} and  the separate thin-spline estimates. All methods improved as the sample size increases.

\begin{table}[H]
\center
\setlength\tabcolsep{1.5pt}
\begin{tabular}{c|c|c|c|c|c|c}
\hline
$n_1$&$n_2$&Method& Overall KL & $\hbox{KL}_s$ &MSE($\hat{\mu}$) &MSE($\hat{\sigma}$)\\
\hline
\multirow{ 6}{*}{100}&\multirow{ 3}{*}{100}& SEMI& 5.36(0.92, 4.44)&2.56(0.49, 2.07)&3.58(0.00, 3.58)&2.34(0.00, 2.34)\\
&&CHAR &13.39(10.06, 3.33)&6.86(3.53, 3.33)&2.24(0.00, 2.24)&1.95(0.00, 1.95)\\
&&TP&5.81(1.95, 3.86)&&&\\\cline{2-7}
&\multirow{ 3}{*}{200}&SEMI &4.27(0.79, 3.48)&2.56(0.41, 2.15)&2.31(0.01, 2.30)&1.41(0.00, 1.41)\\
&&CHAR & 11.07(8.82, 2.25)&6.26(4.02, 2.24)&1.58(0.00, 1.58)&1.34(0.00, 1.34)\\
&&TP&4.95(1.61, 3.34)&&&\\\hline
\multirow{ 6}{*}{200}&\multirow{ 3}{*}{100}&SEMI &3.84(0.87, 2.97)&1.60(0.41, 1.19)&2.63(0.11, 2.52)&1.29(0.07, 1.22)\\
&&CHAR&10.67(8.83, 1.84)&4.69(2.86, 1.83)&1.52(0.04, 1.48)&0.97(0.03, 0.94)\\
&&TP&4.58(1.57, 3.01)&&&\\\cline{2-7}
&\multirow{ 3}{*}{200}&SEMI &3.21(0.85, 2.36)&1.46(0.32, 1.14)&1.78(0.00, 1.78)&0.86(0.00, 0.86)\\
&&CHAR&8.34(6.82, 1.52)&4.07(2.55, 1.52)&1.65(0.00, 1.65)&0.74(0.01, 0.73)\\
&&TP&3.60(1.16, 2.44)&&&\\\hline
\end{tabular}
\caption{Kullback-Leibler distances and mean squared errors of different methods. Numbers inside parentheses of KLs are bias and variances. Numbers inside parentheses of MSEs are squared biases and variances. All numbers are in $10^{-2}$.}\label{TwoGumbel} 
\end{table}

\noindent \textbf{Logistic distribution}
The pdf of the logistic distribution is given by
$
f(x;\mu ,\sigma)={\frac {e^{-{{(x-\mu)}/{\sigma}}}}{\sigma\left(1+e^{-{{(x-\mu)}/{\sigma}}}\right)^{2}}}$, $\quad x\in\mathbb{R}.
$
The simulation results are in Table \ref{TwoLogistic}. We observe the same relative behavior as in the case of two-sample Gumbel distributions.
\begin{table}[H]
\center
\setlength\tabcolsep{1.5pt}
\begin{tabular}{c|c|c|c|c|c|c}
\hline
$n_1$&$n_2$&Method& Overall KL & $\hbox{KL}_s$ &MSE($\hat{\mu}$) &MSE($\hat{\sigma}$)\\
\hline
\multirow{ 6}{*}{100}&\multirow{ 3}{*}{100}& SEMI& 3.19(0.29, 2.90)&1.49(0.13, 1.36)&7.25(0.10, 7.15)&1.58(0.00, 1.58)\\
&&CHAR&10.52(7.48, 3.04)&5.24(2.20, 3.04)&6.03(0.09, 5.94)&1.41(0.00, 1.41)\\
&&TP&3.71(0.59, 3.12)&&&\\\cline{2-7}
&\multirow{ 3}{*}{200}&SEMI &2.41(0.15, 2.26) & 1.53(0.04, 1.49)&4.67(0.15, 4.52) &1.44(0.00, 1.44)\\
&&CHAR&8.20(5.92, 2.28)&4.74(2.49, 2.25)&3.94(0.15, 3.79) &1.36(0.00, 1.36)\\
&&TP&2.69(0.36, 2.33)&&&\\\hline
\multirow{ 6}{*}{200}&\multirow{ 3}{*}{100}& SEMI&2.17(0.12, 2.05)&0.82(0.09, 0.73)&5.53(0.00, 5.53)&1.16(0.04, 1.12)\\
&&CHAR&8.20(6.41, 1.79)&3.70(1.91, 1.79) &5.34(0.01, 5.33)&1.04(0.02, 1.02)\\
&&TP&2.67(0.34, 2.33)\\\cline{2-7}
&\multirow{ 3}{*}{200}&SEMI &1.54(0.17, 1.37)&0.78(0.05, 0.73)&3.70(0.01, 3.69)&0.58(0.01, 0.57)\\
&&CHAR&6.10(4.87, 1.23)&3.15(1.92, 1.23)&3.53(0.01, 3.52)&0.54(0.00, 0.54)\\
&&TP&1.86(0.36, 1.50)&&&\\\hline
\end{tabular}
\caption{Kullback-Leibler distances and mean squared errors of different methods. Numbers inside parentheses of KLs are bias and variances. Numbers inside parentheses of MSEs are squared biases and variances. All numbers are in $10^{-2}$.}\label{TwoLogistic} 
\end{table}

\section{Application to the Old Faithful data }\label{sec::realAnalysis}
The data set consists of waiting time between eruptions and the duration of the eruption gathered from 272 consecutive eruptions of the Old Faithful geyser in Yellowstone National Park, Wyoming, USA \cite{hardle2012smoothing}. Histograms of these two variables suggest that both of the two variables have two modes. Therefore, we consider a semiparametric model with a mixture Normal as the parametric component:
\begin{equation}
\eta(x;\mu_1,\sigma_1,\mu_2,\sigma_2,p,h)=\log f_0(x;\mu_1,\sigma_1,\mu_2,\sigma_2,p)+h(x),\label{faithfulModel}
\end{equation}
where $h\in W_2^2[40,100] \ominus \{1\}$ for the waiting time and $h\in W_2^2[1.5,5.5]\ominus \{1\}$ for the duration time, and $f_0$ is a mixture Normal density
\begin{equation}
f_0(x)=p \frac{1}{\sqrt{2\pi \sigma_1^2}}\exp\left(-\frac{(x-\mu_1)^2}{2\sigma_1^2}\right)+(1-p)\frac{1}{\sqrt{2\pi \sigma_2^2}}\exp\left(-\frac{(x-\mu_2)^2}{2\sigma_2^2}\right).\label{mixnorm}
\end{equation}
Model \eqref{faithfulModel} is a special case of the additive model \eqref{additive}. We compute density estimates for both waiting time and duration using the profiled likelihood  estimation procedure in Section \ref{sec::additive}.  We also compute the estimates of the mixture Normal 
density \eqref{mixnorm} and cubic spline for both the waiting time and the duration. 


The estimated densities are shown in Figures \ref{faithfulwaiting} and \ref{faithfulDuration} for the waiting time and the duration respectively.  For the  waiting time, the estimates from the semiparametric model and mixture Normal are quite close suggesting that the parametric mixture Normal fits data adequately. Both of them follow the data on the left  slightly better than the  cubic spline estimate. For the duration, the estimates from the semiparametric model and cubic spline are close. And both of them portray both modes more precisely.
\begin{figure}[H]
\center
\includegraphics[scale=0.8]{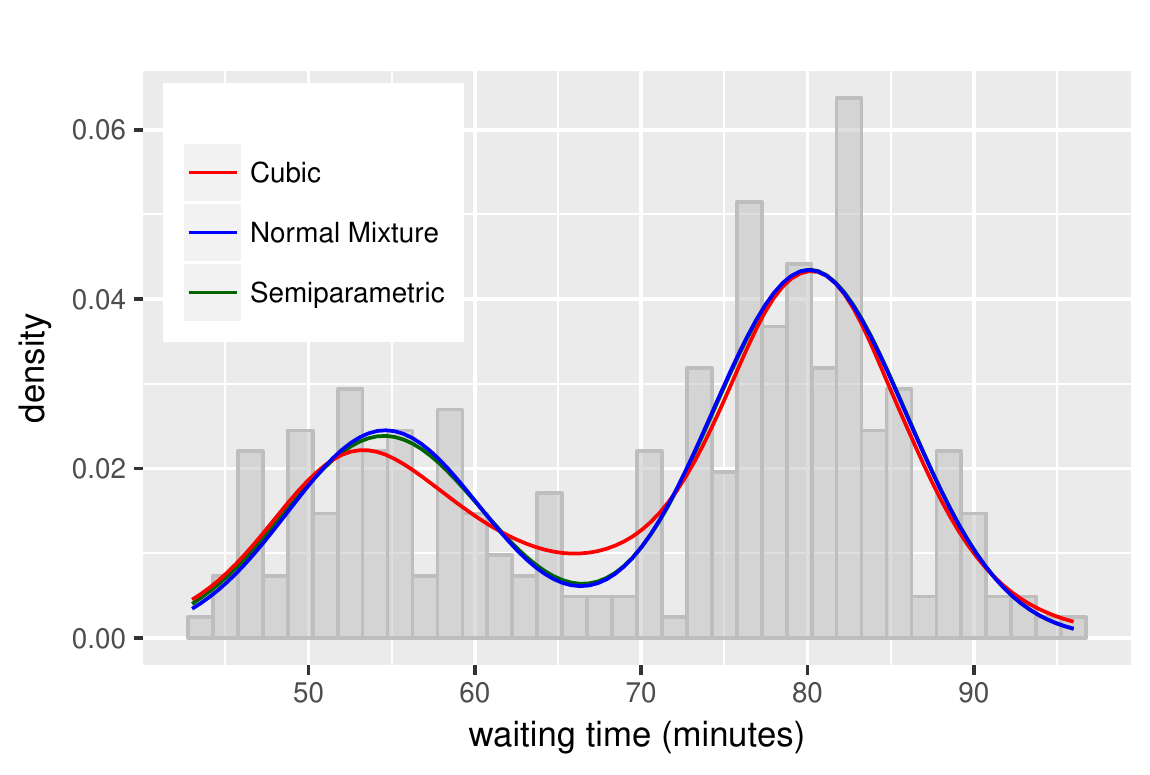}
\caption{Density estimates for the waiting time of faithful data.}\label{faithfulwaiting}
\end{figure}
\begin{figure}[H]
\center
\includegraphics[scale=0.8]{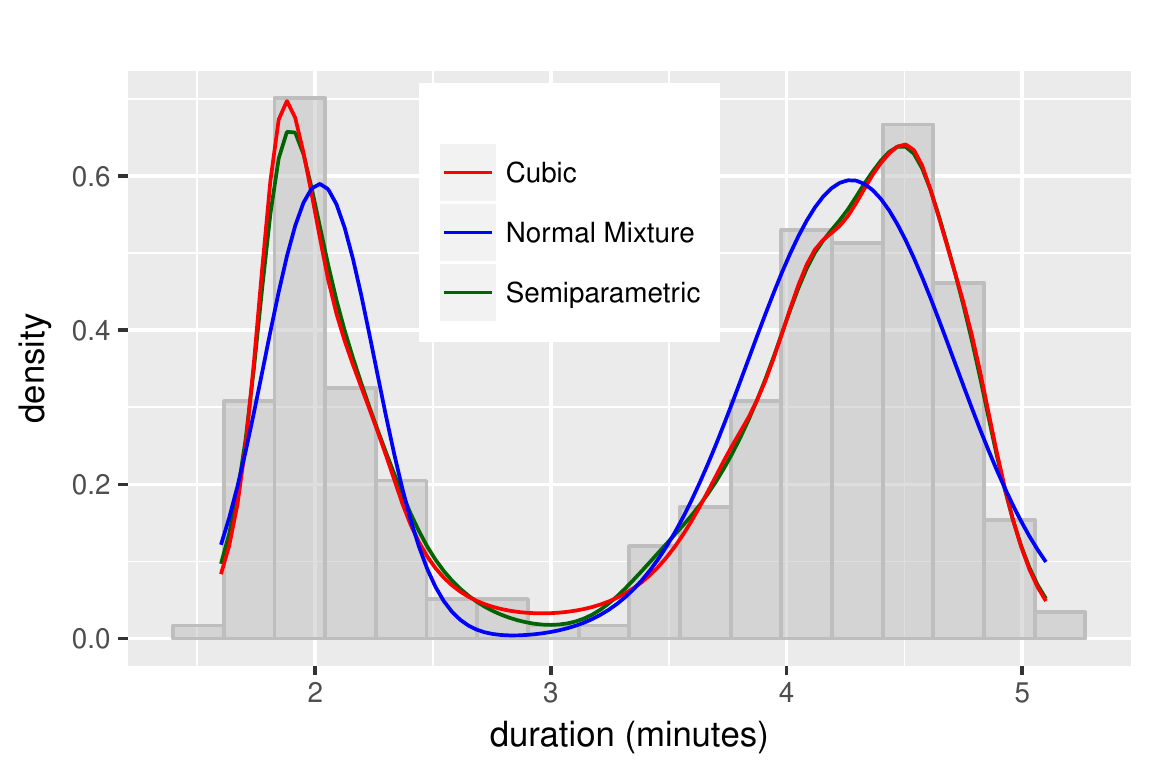}
\caption{Density estimates for the duration  of faithful data.}\label{faithfulDuration}
\end{figure}


\section*{Acknowledgements}
We gratefully acknowledge support from the National Science Foundation (DMS-1507078 for Anna Liu and DMS-1507620 for Yuedong Wang). We acknowledge support from the Center for Scientific Computing from the CNSI, MRL: an NSF MRSEC (DMR-1720256).

\section*{Appendix}

\subsection*{Verification of inner product $\left\langle \cdot,\cdot \right\rangle_{\Q}$}
We now turn to the discussion of the validity of \eqref{inner_product}. For any $p_1 \times 1$ and $p_2 \times 1$ vectors of functions in $\CH$, say $G_1 = [G_1^k]_{k=1}^{p_1}$ and  $G_2 = [G_2^k]_{k=1}^{p_2}$, we use the vector form of the inner product $\langle G_1, G_2 \rangle_{\CH}$ to denote a $p_1 \times p_2$ matrix in which the $(i,j)$th entry is $\langle G_1^i, G_2^j \rangle_{\CH}$. For any $g \in \CH$, let $\mathcal{F}^{k}g = \sum_{l=1}^{m} V_{l,\tau_0}[L_{\l,\theta}^k, L_{l,h}g]$. Since 
\[ \abs{\mathcal{F}^{k} g} \leq \left[\sum_{l=1}^{m} V_{l,\tau_0}(L_{\l,\theta}^k)\right]^{\frac{1}{2}} \left[ \sum_{l=2}^{m} V_{l,\tau_0}(L_{l,h} g) \right]^{\frac{1}{2}} \leq C \norm{g}_{\CH}\]
for some positive constant $C$, $\mathcal{F}^k$ is a bounded linear functional on $\CH$. By the Riesz representation theorem, there exists a $F^k \in \CH$ such that for any $g \in \CH$,
\[\mathcal{F}^{k}g = \sum_{l=1}^{m} V_{l,\tau_0}[L_{l,\theta}^k, L_{l,h} g] = \langle F^k, g \rangle_{\CH}.\]
Let $F = \left[F^k\right]_{k=1}^p$. We define $V_{l,\tau_0}(L_{l,\theta}, L_{l,h} g)$, $V_{l,\tau_0}(L_{l,h} F,L_{l,h} g)$, and $J(F,g)$ to be $p \times 1$ vectors whose $k$th entries are $V_{l,\tau_0}(L_{l,\theta}^k, L_{l,h} g)$, $V_{l,\tau_0}(L_{l,h} F^k,L_{l,h} g)$, and $J(F^k,g)$, respectively. Therefore, 
\[  \sum_{l=1}^{m} V_{l,\tau_0}(L_{l,\theta}, L_{l,h} g)= \sum_{l=1}^{m}V_{\tau_0}(L_{l,h} F,L_{l,h} g) + \lambda J(F,g) = \langle F, g \rangle_{\CH}
.\]
We also define the $p \times p$ matrix $\Omega_F = \sum_{l=1}^{m} V_{l,\tau_0}(L_{l,\theta} - L_{l,h}F , L_{l,\theta} - L_{l,h}F)$, whose $(i,j)$th entry is $\sum_{l=1}^{m} V_{l,\tau_0}(L^i_{l,\theta} - L_{l,h}F^i , L^j_{l,\theta} - L_{l,h}F^j)$.
\begin{alemma}Under Assumption \ref{A3}, $\Omega_F$ is positive definite and the eigenvalues of $\Omega_F$ are greater than $c_{\delta}$, which is as defined in Assumption \ref{A3}$(ii)$.
\label{positive_definite_Omega}
\end{alemma}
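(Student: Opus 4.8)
The plan is to show that for any nonzero $\zeta \in \R^p$, the quadratic form $\zeta^T \Omega_F \zeta$ exceeds $c_\delta \norm{\zeta}_{l^2}^2$, which simultaneously establishes positive definiteness and the eigenvalue bound. First I would write out $\zeta^T \Omega_F \zeta = \sum_{l=1}^m V_{l,\tau_0}(L_{l,\theta}\zeta - L_{l,h}(F^T\zeta))$, using the bilinearity of each $V_{l,\tau_0}(\cdot,\cdot)$ and the fact that $F^T\zeta = \sum_{k=1}^p \zeta^k F^k$ is a well-defined element of $\CH$ (it is a finite linear combination of the Riesz representers $F^k$). Thus the quadratic form equals $\sum_{l=1}^m V_{l,\tau_0}(L_{l,\theta}\zeta - L_{l,h}g)$ with the specific choice $g = F^T\zeta \in \CH$.

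The key observation is then that this expression has \emph{exactly} the form appearing on the left-hand side of \eqref{lower_bound_c_delta} in Assumption \ref{A3}$(ii)$, which gives a lower bound $c_\delta$ whenever $\norm{\zeta}_{l^2} = 1$ and $g \in \CH$ is arbitrary. Since $g = F^T\zeta$ is a legitimate element of $\CH$, the assumption applies directly. For a unit vector $\zeta$, we immediately get $\zeta^T \Omega_F \zeta > c_\delta > 0$; by homogeneity of degree two, $\zeta^T \Omega_F \zeta > c_\delta \norm{\zeta}_{l^2}^2$ for all nonzero $\zeta$. This shows $\Omega_F$ is symmetric positive definite, and since its smallest eigenvalue equals $\min_{\norm{\zeta}_{l^2}=1} \zeta^T\Omega_F\zeta \ge c_\delta$ (with the strict inequality from the assumption, a minor technical point one can address by noting the unit sphere is compact and the form is continuous, or simply by stating the eigenvalues are at least $c_\delta$), all eigenvalues exceed $c_\delta$.

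The main obstacle, such as it is, is mostly bookkeeping: making sure the vector/matrix notation for $V_{l,\tau_0}(L_{l,\theta} - L_{l,h}F, L_{l,\theta} - L_{l,h}F)$ is unpacked correctly so that the scalar $\zeta^T\Omega_F\zeta$ really does collapse to a single $V_{l,\tau_0}$ evaluated at $L_{l,\theta}\zeta - L_{l,h}(F^T\zeta)$, rather than something involving cross terms that do not match the assumption. Concretely, I would verify that $\zeta^T V_{l,\tau_0}(L_{l,\theta} - L_{l,h}F, L_{l,\theta} - L_{l,h}F)\zeta = V_{l,\tau_0}\big(\textstyle\sum_k \zeta^k(L^k_{l,\theta} - L_{l,h}F^k), \sum_j \zeta^j(L^j_{l,\theta} - L_{l,h}F^j)\big)$, which is just the standard identity $\zeta^T M \zeta$ for a Gram-type matrix $M$ built from a bilinear form; the linearity of $L_{l,\theta}$ and $L_{l,h}$ in their arguments then lets me pull the sums inside. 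Once this identification is made, the result is an immediate consequence of Assumption \ref{A3}$(ii)$ with no further estimation needed.
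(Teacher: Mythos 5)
Your proposal is correct and follows essentially the same route as the paper's proof: expand the quadratic form $\zeta^T\Omega_F\zeta$ into $\sum_l V_{l,\tau_0}(L_{l,\theta}\zeta - L_{l,h}(\zeta^T F))$, normalize $\zeta$, apply Assumption \ref{A3}$(ii)$ with $g=\zeta^T F\in\CH$, and conclude by two-homogeneity. The only cosmetic difference is in the eigenvalue step: you invoke the Rayleigh-quotient characterization plus compactness of the unit sphere, while the paper simply evaluates the form at a unit eigenvector, which sidesteps the strict-inequality concern you flagged; both are valid.
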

\begin{proof}Fix a non-zero vector $\zeta \in \R^p$ and write $\zeta_{*} = \zeta/\norm{\zeta}_{l^2}$. We have 
\begin{align*}
\zeta^{T} \Omega_F \zeta &=  \sum_{l=1}^{m}\zeta^{T} V_{l,\tau_0}(L_{l,\theta} - L_{l,h}F , L_{l,\theta} - L_{l,h}F) \zeta\\
&=\norm{\zeta}_{l^2}^2 \sum_{l=1}^{m}V_{l,\tau_0}[L_{l,\theta}\zeta_{*} - L_{l,h}(\zeta_{*}^T F) , L_{l,\theta}\zeta_{*} - L_{l,h}(\zeta_{*}^T F)]\\
&> c_{\delta}\norm{\zeta}_{l^2}^2,
\end{align*}
where the last inequality holds by Assumption \ref{A3}$(ii)$ because $\norm{\zeta_*}_{l^2}=1$ and $\zeta_*^{T}F \in \CH$. Therefore, $\Omega_F$ is positive definite.

Let $\delta$ be any eigenvalue of $\Omega_F$, and let $\zeta_{\delta} \in \R^p$ be a unit eigenvector associated with $\delta$. By definition, we have $\Omega_F \zeta_{\delta} = \delta \zeta_{\delta}$. We have
\begin{align*}
\delta &= \zeta_{\delta}^{T}\delta\zeta_{\delta} = \zeta_{\delta}^{T}\Omega_F \zeta_{\delta} =\sum_{l=1}^{m}V_{l,\tau_0}[L_{l,\theta}\zeta_{\delta} - L_{l,h}(\zeta_{\delta}^T F) , L_{l,\theta}\zeta_{*} - L_{l,h}(\zeta_{*}^T F)] > c_{\delta}.
\end{align*}
\end{proof} 
\begin{atheorem}
Suppose Assumption \ref{A3} holds. Then $\langle \cdot, \cdot \rangle_{\Q}$ given by \eqref{inner_product} is a well-defined inner product on $\Q$, and $\Q$ is complete with respect to the norm $\norm{\cdot}_{\Q}$ induced by this inner product. Hence, $\Q$ is a Hilbert space. 
\label{inner_product_theorem}
\end{atheorem}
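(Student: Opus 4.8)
The plan is to check, in order, that $\langle\cdot,\cdot\rangle_{\Q}$ is well defined and bilinear, that it is positive definite (hence an inner product), and that the induced norm $\norm{\cdot}_{\Q}$ is complete. The device that drives the last two steps is a ``completing the square'' identity which diagonalizes $\norm{(\zeta,g)}_{\Q}^2$ by orthogonalizing the parametric direction against $\CH$ through the Riesz representative $F$ constructed above. First I would dispose of the routine points: for $(\zeta_i,g_i)\in\Q$ the operators $L_{l,\theta},L_{l,h}$ take values in $L^2_0(\X_l)$, so $V_{l,\tau_0}(L_{l,\theta}\zeta_i+L_{l,h}g_i,\cdot)$ is a finite bilinear form and $J(g_1,g_2)$ is finite on $\CH$ by Assumption \ref{A2}; hence the defining sum is finite. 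Symmetry and bilinearity of $\langle\cdot,\cdot\rangle_{\Q}$ are immediate since each $V_{l,\tau_0}(\cdot,\cdot)$ and $J(\cdot,\cdot)$ is a symmetric bilinear form, and $\norm{(\zeta,g)}_{\Q}^2\ge 0$ because each $V_{l,\tau_0}(\varphi)$ is a variance under $\mu_{l,\tau_0}$ and $J(g)\ge 0$.

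For positive definiteness, recall $F=[F^k]_{k=1}^p$ with $\langle F^k,g\rangle_{\CH}=\sum_{l=1}^m V_{l,\tau_0}(L_{l,\theta}^k,L_{l,h}g)$ for every $g\in\CH$; writing out $\langle\cdot,\cdot\rangle_{\CH}$, this is equivalent to the identity $\sum_{l=1}^m V_{l,\tau_0}(L_{l,\theta}^k-L_{l,h}F^k,\,L_{l,h}g)=\lambda J(F^k,g)$. I would then substitute $w=g+\zeta^T F$ in $\norm{(\zeta,g)}_{\Q}^2=\sum_{l=1}^m V_{l,\tau_0}(L_{l,\theta}\zeta+L_{l,h}g)+\lambda J(g)$, expand the variance term as $V_{l,\tau_0}$ of $\big(L_{l,\theta}\zeta-L_{l,h}(\zeta^T F)\big)+L_{l,h}w$ and expand $\lambda J(w-\zeta^T F)$, and use the identity above to see the two cross terms cancel, arriving at
\[\norm{(\zeta,g)}_{\Q}^2=\zeta^T\Omega_F\zeta+\lambda J(\zeta^T F)+\norm{g+\zeta^T F}_{\CH}^2.\]
Each summand on the right is nonnegative, so if the left side vanishes then $\zeta^T\Omega_F\zeta=0$, forcing $\zeta=0$ by Lemma A.\ref{positive_definite_Omega}, and then $\norm{g}_{\CH}=0$, forcing $g=0$ since $\norm{\cdot}_{\CH}$ is a genuine norm by Assumption \ref{A3}$(i)$. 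Hence $\langle\cdot,\cdot\rangle_{\Q}$ is an inner product.

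Completeness then follows from the same identity together with a norm equivalence. By Lemma A.\ref{positive_definite_Omega}, $c_\delta\norm{\zeta}_{l^2}^2\le\zeta^T\Omega_F\zeta$, while $\zeta^T\Omega_F\zeta\le C\norm{\zeta}_{l^2}^2$ for some constant $C$ because $\Omega_F$ has finite entries; moreover $\lambda J(\zeta^T F)\le\norm{\zeta^T F}_{\CH}^2\le\big(\sum_k\norm{F^k}_{\CH}^2\big)\norm{\zeta}_{l^2}^2$, using $\lambda J(g)\le\norm{g}_{\CH}^2$ for all $g$ and the Cauchy--Schwarz inequality. Therefore $\zeta^T\Omega_F\zeta+\lambda J(\zeta^T F)\sim\norm{\zeta}_{l^2}^2$, so that $\norm{(\zeta,g)}_{\Q}^2\sim\norm{\zeta}_{l^2}^2+\norm{g+\zeta^T F}_{\CH}^2$. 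The map $\Phi(\zeta,g)=(\zeta,g+\zeta^T F)$ is a linear bijection of $\Q$, so $N(\zeta,g):=\big(\norm{\zeta}_{l^2}^2+\norm{g+\zeta^T F}_{\CH}^2\big)^{1/2}$ is a norm on $\Q$, namely the pullback under $\Phi$ of the norm $(\zeta,w)\mapsto\big(\norm{\zeta}_{l^2}^2+\norm{w}_{\CH}^2\big)^{1/2}$ on $\R^p\times\CH$; the latter is complete because $(\R^p,\norm{\cdot}_{l^2})$ and $(\CH,\norm{\cdot}_{\CH})$ are complete, so $N$ is complete. Since $\norm{\cdot}_{\Q}\sim N$, the norm $\norm{\cdot}_{\Q}$ is complete, and being induced by an inner product, $\Q$ is a Hilbert space. (The same equivalence incidentally yields the identification of $\norm{\cdot}_{\Q}$ with the product norm $\norm{\cdot}_{\Q,1}$ used in Remark \ref{remark1}$(iv)$.)

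The hard part will be the completing-the-square expansion: one must carefully track the cross term $\sum_{l}V_{l,\tau_0}\big(L_{l,\theta}\zeta-L_{l,h}(\zeta^T F),\,L_{l,h}w\big)$, recognize it as $\lambda J(\zeta^T F,w)$ via the Riesz identity, and verify that it is exactly cancelled by the $-2\lambda J(\zeta^T F,w)$ produced when expanding $\lambda J(w-\zeta^T F)$. Everything else reduces to bookkeeping with bounded symmetric bilinear forms.
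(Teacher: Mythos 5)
Your completing-the-square identity and the proof of positive definiteness coincide exactly with the paper's: both establish
\[\norm{(\zeta,g)}_{\Q}^2=\zeta^T\Omega_F\zeta+\lambda J(\zeta^T F,\zeta^T F)+\norm{g+\zeta^T F}_{\CH}^2\]
via the Riesz representer $F$ and invoke Lemma A.\ref{positive_definite_Omega} to force $\zeta=0$, then $g=0$. Where you genuinely diverge is in the completeness argument. The paper takes a Cauchy sequence $(\zeta_i,g_i)$, applies Assumption~\ref{A3}$(ii)$ directly to the normalized differences to extract $\norm{\zeta_i-\zeta_j}_{l^2}^2\le\epsilon/c_\delta$, and then chases Cauchy--Schwarz and an elementary $2a^{1/2}b^{1/2}\le\tfrac{1}{2}a+2b$ inequality to show $(g_i)$ is Cauchy in $\CH$, finally verifying convergence of the pair. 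You instead read off a two-sided norm equivalence $\norm{(\zeta,g)}_{\Q}^2\sim\norm{\zeta}_{l^2}^2+\norm{g+\zeta^T F}_{\CH}^2$ directly from the identity (lower bound from Lemma A.\ref{positive_definite_Omega}, upper bound because $\Omega_F$ has finite entries and $\lambda J(\zeta^T F)\le\bigl(\sum_k\norm{F^k}_{\CH}^2\bigr)\norm{\zeta}_{l^2}^2$), and then transport completeness from the product norm on $\R^p\times\CH$ through the linear automorphism $\Phi(\zeta,g)=(\zeta,g+\zeta^T F)$. This is cleaner and more structural: it avoids the ad hoc arithmetic inequality, it exhibits $\Q$ explicitly as a (shear of a) Hilbert-space direct sum, and it delivers the equivalence with $\norm{\cdot}_{\Q,1}$ of Remark \ref{remark1}$(iv)$ for free, which the paper has to argue separately. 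One small slip in exposition: the cross term you isolate equals $\lambda J(\zeta^T F,w)$, so it is \emph{twice} that quantity (the factor $2$ from the variance expansion) which cancels the $-2\lambda J(\zeta^T F,w)$ coming from $\lambda J(w-\zeta^T F)$; the arithmetic works, but state the doubling.
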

\begin{proof} It is easy to check that \eqref{inner_product} satisfies symmetry, linearity and positive semi-definiteness for an inner product. If $(\zeta,g)=0$, $\langle (\zeta,g),(\zeta,g)\rangle_{\Q}=0$ is obvious. We will now show that $\langle (\zeta,g),(\zeta,g)\rangle_{\Q}=0$ implies $(\zeta,g)=0$. We see that 
\begin{align}
\langle (\zeta,g),(\zeta,g) \rangle_{\Q} &= \sum_{l=1}^{m} V_{l,\tau_0}(L_{l,\theta}\zeta + L_{l,h}g, L_{l,\theta}\zeta + L_{l,h}g) + \lambda J(g,g) \nonumber\\
&= \sum_{l=1}^{m} \left[ \zeta^{T} V_{l,\tau_0}(L_{l,\theta},L_{l,\theta})\zeta + 2 \zeta^{T}V_{l,\tau_0}(L_{l,\theta},L_{l,h}g)+V_{l,\tau_0}(L_{l,h}g,L_{l,h}g)\right] + \lambda J(g,g) \nonumber\\
& = \sum_{l=1}^{m} \zeta^{T} \left[ V_{l,\tau_0}(L_{l,\theta} - L_{l,h}F,L_{l,\theta} - L_{l,h}F) -  V_{l,\tau_0}(L_{l,h}F,L_{l,h}F)  + 2 V_{l,\tau_0}(L_{l,\theta},L_{l,h}F)\right]\zeta \nonumber \\
& \quad + 2 \zeta^{T} \sum_{l=1}^{m} V_{l,\tau_0}(L_{l,\theta},L_{l,h}g) + \langle g,g \rangle_{\CH} \nonumber\\
&=\zeta^T \Omega_F \zeta +\zeta^T \langle F,F \rangle_{\CH} \zeta +2 \zeta^T \langle F,g \rangle_{\CH} + \langle g, g \rangle_{\CH} + \lambda J(\zeta^T F, \zeta^T F) \nonumber\\
&=\zeta^T \Omega_F \zeta + \langle \zeta^T F + g, \zeta^T F +g \rangle_{\CH}+ \lambda J(\zeta^T F, \zeta^T F) \geq 0,
\label{positive_inner}
\end{align}
and every term in \eqref{positive_inner} is non-negative. If $\langle (\zeta,g),(\zeta,g) \rangle=0$, the first term in \eqref{positive_inner} implies $\zeta=0$ by Lemma \ref{positive_definite_Omega}. This further implies that 
\[\langle \zeta^T F + g, \zeta^T F +g \rangle_{\CH} = \langle g, g \rangle_{\CH} =0.\]
Therefore, $g=0$ because $\langle \cdot, \cdot \rangle_{\CH}$ is an inner product on $\CH$. Hence, $\langle \cdot,\cdot \rangle_{\Q}$ is a well-defined inner product on $\Q$. 

Next, we want to show that $\Q$ is complete with respect to the norm $\norm{\cdot}_{\Q}$. Let $\{(\zeta_i,g_i)\}_{i=1}^{\infty} \subset \Q$ be a Cauchy sequence. For any $\epsilon >0$, there exist a positive integer $M$ such that for all $i,j>M$, we have 
\[\norm{(\zeta_i,g_i)-(\zeta_j,g_j)}_{\Q}^2 = \sum_{l=1}^{m} V_{l,\tau_0}[L_{l,\theta}(\zeta_i - \zeta_j) + L_{l,h}(g_i-g_j)] + \lambda J(g_i-g_j) \leq \epsilon. \]
This implies that 
\[\sum_{l=1}^{m} V_{l,\tau_0}[L_{l,\theta}(\zeta_i - \zeta_j) + L_{l,h}(g_i-g_j)] = \norm{\zeta_i - \zeta_j}_{l^2}^2 \sum_{l=1}^{m} V_{l,\tau_0}[L_{l,\theta}(\zeta_i - \zeta_j)^* + L_{l,h}(g_i-g_j)^*] \leq \epsilon,\]
where $(\zeta_i - \zeta_j)^* = (\zeta_i - \zeta_j)/\norm{\zeta_i - \zeta_j}_{l^2}$, and $(g_i-g_j)^* = (g_i-g_j)/\norm{\zeta_i - \zeta_j}_{l^2}$. By Assumption \ref{A3}$(ii)$, $\sum_{l=1}^{m} V_{l,\tau_0}[L_{l,\theta}(\zeta_i - \zeta_j)^* + L_{l,h}(g_i-g_j)^*]>c_{\delta}$ for some positive constant $c_{\delta}$ as defined in \eqref{lower_bound_c_delta}. Therefore,
\begin{equation}
\norm{\zeta_i - \zeta_j}_{l^2}^2  \leq \frac{\epsilon}{c_{\delta}},
\label{zeta_cauchy}
\end{equation}
and $\{\zeta_i\}_{i=1}^{\infty}$ is a Cauchy sequence in $\R^p$ under the Euclidean norm, which therefore converges to some limit $\zeta_{\infty} \in \R^p$.

To find a limit for the sequence $\{g_i\}_{i=1}^{\infty}$ in $\CH$, we consider
\begin{align}
&\norm{\zeta_i - \zeta_j}_{\R^p}^2 + \norm{g_i - g_j}_{\CH}^2 - 2\norm{\zeta_i - \zeta_j}_{\R^p}\norm{g_i - g_j}_{\CH}\nonumber\\
\leq & \norm{\zeta_i - \zeta_j}_{\R^p}^2 + \norm{g_i - g_j}_{\CH}^2 - 2\abs{\sum_{l=1}^{m} V_{l,\tau_0}[L_{l,\theta}(\zeta_i - \zeta_j),L_{l,h}(g_i-g_j)]}\nonumber\\
\leq & \norm{\zeta_i - \zeta_j}_{\R^p}^2 + \norm{g_i - g_j}_{\CH}^2 + 2\sum_{l=1}^{m} V_{l,\tau_0}[L_{l,\theta}(\zeta_i - \zeta_j),L_{l,h}(g_i-g_j)]\nonumber\\
= & \sum_{l=1}^{m} V_{l,\tau_0}[L_{l,\theta}(\zeta_i - \zeta_j) + L_{l,h}(g_i-g_j)] + \lambda J(g_i-g_j)\nonumber\\
=& \norm{(\zeta_i,g_i)-(\zeta_j,g_j)}_{\Q}^2 \leq \epsilon
\label{tofind_gi_limit}
\end{align}
where the first inequality follows from the Cauchy-Schwarz inequality, and the second inequality follows from the triangle inequality. For $a>0, b>0$, we have $(1/4)a + b - a^{1/2}b^{1/2} = [(1/2)a^{1/2} - b^{1/2}]^2 \geq 0$, and it follows that $2a^{1/2}b^{1/2} \leq (1/2)a + 2b$. For $a = \norm{g_i - g_j}_{\CH}^2$ and $b=\norm{\zeta_i - \zeta_j}_{\R^p}^2$, \eqref{tofind_gi_limit} becomes
\begin{align}
\norm{\zeta_i - \zeta_j}_{\R^p}^2 + \norm{g_i - g_j}_{\CH}^2
\leq \epsilon + 2\norm{\zeta_i - \zeta_j}_{\R^p}\norm{g_i - g_j}_{\CH}
\leq \epsilon + \frac{1}{2}\norm{g_i - g_j}_{\CH}^2 + 2\norm{\zeta_i - \zeta_j}_{\R^p}^2.
\label{gi_cauchy}
\end{align}
Since $\norm{\cdot}_{\R^p}$ is equivalent to $\norm{\cdot}_{l^2}$ on $\R^p$, $\norm{\zeta_i - \zeta_j}_{\R^p}^2 \leq C\epsilon$ for some positive constant $C$ by \eqref{zeta_cauchy}. Therefore, after rearranging \eqref{gi_cauchy}, we get 
$\norm{g_i - g_j}_{\CH}^2 \leq (2+C)\epsilon$.
Hence, $\{g_i\}_{i=1}^{\infty}$ is a Cauchy sequence in $\CH$ under the norm $\norm{\cdot}_{\CH}$. By Assumption \ref{A3}$(i)$, this sequence converges to some limit $g_{\infty} \in \CH$. 

Lastly, we show that $(\zeta_i,g_i)$ converges to $(\zeta_{\infty},g_{\infty})$ in $\norm{\cdot}_{\Q}$. By the Cauchy-Schwarz inequality and triangle inequality, as $i \to \infty$, we have 
\begin{align*}
\norm{(\zeta_i,g_i)-(\zeta_{\infty},g_{\infty})}_{\Q}^2 &= \norm{\zeta_i - \zeta_{\infty}}_{\R^p}^2 + \norm{g_i - g_{\infty}}_{\CH}^2 + 2\sum_{l=1}^{m} V_{l,\tau_0}[L_{l,\theta}(\zeta_i - \zeta_{\infty}),L_{l,h}(g_i-g_{\infty})]\\
& \leq \norm{\zeta_i - \zeta_{\infty}}_{\R^p}^2 + \norm{g_i - g_{\infty}}_{\CH}^2 + 2\abs{\sum_{l=1}^{m} V_{l,\tau_0}[L_{l,\theta}(\zeta_i - \zeta_{\infty}),L_{l,h}(g_i-g_{\infty})]}\\
& \leq \norm{\zeta_i - \zeta_{\infty}}_{\R^p}^2 + \norm{g_i - g_{\infty}}_{\CH}^2 + 2\norm{\zeta_i - \zeta_{\infty}}_{\R^p}\norm{g_i - g_{\infty}}_{\CH} \to 0.
\end{align*}
Therefore, we conclude that $\Q$ is a Hilbert space with respect to the inner product $\langle \cdot,\cdot \rangle_{\Q}$.
\end{proof}

\subsection*{Proof of Lemma \ref{lemma1}}
Recall that $\theta = \left[\theta^k\right]_{k=1}^p$. Let $\mathcal{G}^{k}g = V_{\tau_0}[D_{\theta^k}\eta(\tau_0), D_h \eta(\tau_0) g]$, for any $g \in \CH$. Since 
\[ \abs{\mathcal{G}^{k} g} \leq V_{\tau_0}^{\frac{1}{2}}[D_{\theta^k}\eta(\tau_0)]V_{\tau_0}^{\frac{1}{2}}[D_h \eta(\tau_0) g] \leq C \norm{g}_{\CH}\]
for some positive constant $C$, $\mathcal{G}^k$ is a bounded linear functional on $\CH$. By the Riesz representation theorem, there exists a $G^k \in \CH$ such that for any $g \in \CH$,
\[\mathcal{G}^{k}g = V_{\tau_0}[D_{\theta^k}\eta(\tau_0), D_h \eta(\tau_0) g] = V_{\tau_0}[D_h \eta(\tau_0) G^k,D_h \eta(\tau_0) g] + \lambda J(G^k,g).\]
Let $G = \left[G^k\right]_{k=1}^p$. We define $V_{\tau_0}[D_{\theta}\eta(\tau_0), D_h \eta(\tau_0) g]$, $V_{\tau_0}[D_h \eta(\tau_0) G,D_h \eta(\tau_0) g]$, and $J(G,g)$ to be $p \times 1$ vectors whose $k$th entries are $V_{\tau_0}[D_{\theta^k}\eta(\tau_0), D_h \eta(\tau_0) g]$, $V_{\tau_0}[D_h \eta(\tau_0) G^k,D_h \eta(\tau_0) g]$, and $J(G^k,g)$, respectively. Therefore, 
\[  V_{\tau_0}[D_{\theta}\eta(\tau_0), D_h \eta(\tau_0) g]= V_{\tau_0}[D_h \eta(\tau_0) G,D_h \eta(\tau_0) g] + \lambda J(G,g)
.\]

The Fourier expansion of $G^k$ given the eigensystem discussed in Section \ref{Spectral decomposition} is 
\[G^k = \sum_{\nu} V_{\tau_0}[D_h \eta(\tau_0) G^k,D_h \eta(\tau_0) \phi_{0,\nu}] \phi_{0,\nu}.\] 
Simple calculation shows that 
\[V_{\tau_0}[D_h \eta(\tau_0) G^k,D_h \eta(\tau_0) \phi_{0,\nu}] = (1+\lambda \rho_{0,\nu})^{-1} V_{\tau_0}[D_{\theta^k}\eta(\tau_0), D_h \eta(\tau_0) \phi_{0,\nu}],\]
and hence,
\begin{align} \label{Gk}
G^k = \sum_{\nu} \frac{1}{1+\lambda \rho_{0,\nu}} V_{\tau_0}[D_{\theta^k}\eta(\tau_0), D_h \eta(\tau_0) \phi_{0,\nu}]\phi_{0,\nu}.
\end{align}

Recall from Section \ref{Linear approximation} that 
\[ \tilde{\theta} - \theta_0 = \Omega_{\lambda}^{-1} \brac{\bar{\alpha}_n - \sum_{\nu}\frac{\bar{\beta}_{\nu,n} - \lambda \rho_{0,\nu} h_{0,\nu}}{1+\lambda \rho_{0,\nu}}V_{\tau_0}[D_{\theta}\eta(\tau_0),D_{h}\eta(\tau_0)\phi_{0,\nu}]},\]
where 
\[\Omega_{\lambda} =  V_{\tau_0}[D_{\theta}\eta(\tau_0)] - \sum_{\nu} \frac{V_{\tau_0}[D_{\theta}\eta(\tau_0),D_{h}\eta(\tau_0)\phi_{0,\nu}]^{\otimes 2}}{1+\lambda \rho_{0,\nu}}.\]
The $(i,j)$th entry of this $p \times p$ matrix $\Omega_{\lambda}$ can be written as 
\begin{align*}
\Omega_{\lambda}^{i,j} &= V_{\tau_0}[D_{\theta^i}\eta(\tau_0),D_{\theta^j}\eta(\tau_0)] - \sum_{\nu} \frac{V_{\tau_0}[D_{\theta^i} \eta(\tau_0), D_h \eta(\tau_0) \phi_{0,\nu}]V_{\tau_0}[D_{\theta^j} \eta(\tau_0), D_h \eta(\tau_0) \phi_{0,\nu}]}{1+\lambda \rho_{0,\nu}}\\
&= V_{\tau_0}[D_{\theta^i}\eta(\tau_0),D_{\theta^j}\eta(\tau_0)] -  V_{\tau_0}
\left[ D_{\theta^i} \eta(\tau_0), \sum_{\nu} \frac{V_{\tau_0}[D_{\theta^j} \eta(\tau_0), D_h \eta(\tau_0) \phi_{0,\nu}]}{1+\lambda \rho_{0,\nu}}D_h \eta(\tau_0) \phi_{0,\nu} \right]\\
&= V_{\tau_0}[D_{\theta^i}\eta(\tau_0),D_{\theta^j}\eta(\tau_0)] -  V_{\tau_0}
\left[ D_{\theta^i} \eta(\tau_0), D_h \eta(\tau_0) \sum_{\nu} \frac{V_{\tau_0}[D_{\theta^j} \eta(\tau_0), D_h \eta(\tau_0) \phi_{0,\nu}]}{1+\lambda \rho_{0,\nu}} \phi_{0,\nu} \right]\\
&= V_{\tau_0}[D_{\theta^i}\eta(\tau_0),D_{\theta^j}\eta(\tau_0)] - V_{\tau_0}[D_{\theta^i}\eta(\tau_0), D_h \eta(\tau_0) G^j]\\
&= V_{\tau_0}[D_{\theta^i}\eta(\tau_0) -  D_h \eta(\tau_0) G^i,D_{\theta^j}\eta(\tau_0) -  D_h \eta(\tau_0) G^j] + \lambda J(G^i,G^j).
\end{align*}
Let $\Omega = V_{\tau_0}[D_{\theta}\eta(\tau_0) -  D_h \eta(\tau_0) G]$ and $\Sigma_{\lambda} = \lambda J(G)$ be the matrices such that 
\[\Omega^{i,j} = V_{\tau_0}[D_{\theta^i}\eta(\tau_0) -  D_h \eta(\tau_0) G^i,D_{\theta^j}\eta(\tau_0) -  D_h \eta(\tau_0) G^j],\] 
and $\Sigma_{\lambda}^{i,j}=\lambda J(G^i,G^j)$.
Thus, $\Omega_{\lambda} = \Omega + \Sigma_{\lambda}$.


We now prove some properties of $\Omega$ and $\Sigma_{\lambda}$, which will be used to establish the bound for $\E[ (\tilde{\theta}-\theta_0)^{T}(\tilde{\theta}-\theta_0)]$. 
\begin{alemma}\label{lemma3}
$\Sigma_{\lambda} \rightarrow 0$ as $\lambda \rightarrow 0$.
\end{alemma}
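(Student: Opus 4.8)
The plan is to exploit the explicit Fourier representation of $G^k$ derived in \eqref{Gk}, together with the growth condition on the eigenvalues $\rho_{0,\nu}$, to bound each entry $\Sigma_\lambda^{i,j} = \lambda J(G^i,G^j)$. First I would write out $J(G^i,G^j)$ in the eigenbasis: since $J(\phi_{0,\mu},\phi_{0,\nu}) = \rho_{0,\nu}\delta_{\nu\mu}$ and
\[
G^k = \sum_\nu \frac{V_{\tau_0}[D_{\theta^k}\eta(\tau_0), D_h\eta(\tau_0)\phi_{0,\nu}]}{1+\lambda\rho_{0,\nu}}\,\phi_{0,\nu},
\]
writing $a^k_\nu \equiv V_{\tau_0}[D_{\theta^k}\eta(\tau_0), D_h\eta(\tau_0)\phi_{0,\nu}]$, we get
\[
\lambda J(G^i,G^j) = \sum_\nu \frac{\lambda \rho_{0,\nu}}{(1+\lambda\rho_{0,\nu})^2}\, a^i_\nu a^j_\nu.
\]
By Cauchy--Schwarz it suffices to treat the diagonal case $i=j$, i.e. to show $\sum_\nu \frac{\lambda\rho_{0,\nu}}{(1+\lambda\rho_{0,\nu})^2}(a^k_\nu)^2 \to 0$.

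Next I would establish that the series $\sum_\nu (a^k_\nu)^2$ converges. The key observation is that $a^k_\nu$ are the Fourier coefficients (with respect to the $V_{\tau_0}(D_h\eta(\tau_0)\cdot,D_h\eta(\tau_0)\cdot)$ inner product) of the projection of $D_{\theta^k}\eta(\tau_0)$ onto the closure of the range of $D_h\eta(\tau_0)$; equivalently, since $\mathcal{G}^k$ is a bounded linear functional on $\CH$ with Riesz representer $G^k$, we have $\sum_\nu (a^k_\nu)^2 = \sum_\nu (1+\lambda\rho_{0,\nu})^2 [V_{\tau_0}(D_h\eta(\tau_0)G^k, D_h\eta(\tau_0)\phi_{0,\nu})]^2$, which is finite because $D_{\theta^k}\eta(\tau_0) \in L^2_0(\X)$ and Assumption~\ref{A6} gives the equivalence $V_{\tau_0}(D_h\eta(\tau_0)g) \sim V_{\tau_0}(L_h g)$, so Bessel's inequality applies. (One can also see $\sum_\nu (a^k_\nu)^2 \le V_{\tau_0}[D_{\theta^k}\eta(\tau_0)] < \infty$ directly by Bessel's inequality in the Hilbert space $L^2_0(\X)$ with the $V_{\tau_0}$ inner product, using that $\{D_h\eta(\tau_0)\phi_{0,\nu}\}$ is orthonormal there.) Then, since $\frac{\lambda\rho_{0,\nu}}{(1+\lambda\rho_{0,\nu})^2} \le \frac14$ for all $\nu$, the dominated convergence theorem for series applies: the summands are bounded by $\frac14 (a^k_\nu)^2$ (summable, independent of $\lambda$), and for each fixed $\nu$, $\frac{\lambda\rho_{0,\nu}}{(1+\lambda\rho_{0,\nu})^2} \to 0$ as $\lambda\to 0$. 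Hence $\Sigma_\lambda^{k,k}\to 0$, and by Cauchy--Schwarz $\abs{\Sigma_\lambda^{i,j}} \le (\Sigma_\lambda^{i,i})^{1/2}(\Sigma_\lambda^{j,j})^{1/2}\to 0$ for all $i,j$, so $\Sigma_\lambda\to 0$ entrywise, which is the claim.

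The main obstacle is justifying the convergence of $\sum_\nu (a^k_\nu)^2$ cleanly — one must be careful about which inner product makes $\{D_h\eta(\tau_0)\phi_{0,\nu}\}$ orthonormal and hence licenses Bessel's inequality, and to invoke Assumption~\ref{A6} (or the spectral setup of Section~\ref{Spectral decomposition} with $(\theta_*,h_*)=(\theta_0,h_0)$) to connect this to the boundedness of $D_{\theta^k}\eta(\tau_0)$ in $L^2_0(\X)$. Once the uniform summable dominating sequence is in hand, the rest is a routine application of dominated convergence for series. No assumption beyond \ref{A2}, \ref{A3}, \ref{A6}, \ref{A8}, and \ref{A9} is needed.
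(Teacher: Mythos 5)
Your proposal is correct and follows essentially the same route as the paper's proof: expand $\lambda J(G^i,G^j)$ in the eigenbasis to get $\sum_\nu \frac{\lambda\rho_{0,\nu}}{(1+\lambda\rho_{0,\nu})^2}a^i_\nu a^j_\nu$, observe square summability of $\{a^k_\nu\}$, and conclude by dominated convergence. You supply the missing justification (Bessel's inequality in $L^2_0(\X)$ with the $V_{\tau_0}$ inner product, using that $\{D_h\eta(\tau_0)\phi_{0,\nu}\}$ is orthonormal there) and the explicit uniform dominating bound $\frac{\lambda\rho}{(1+\lambda\rho)^2}\le\frac{1}{4}$, both of which the paper leaves implicit, but the underlying argument is the same.
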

\begin{proof}
By \eqref{Gk}, we get that the $(i,j)$th entry of $\Sigma_{\lambda}$ is 
\[\lambda J (G^i,G^j) = \sum_{\nu}  \frac{\lambda \rho_{0,\nu}}{(1+\lambda \rho_{0,\nu})^2} V_{\tau_0}[D_{\theta^i}\eta(\tau_0), D_h \eta(\tau_0) \phi_{0,\nu}] V_{\tau_0}[D_{\theta^j}\eta(\tau_0), D_h \eta(\tau_0) \phi_{0,\nu}].\]
By square summability of $\{V_{\tau_0}[D_{\theta^i}\eta(\tau_0), D_h \eta(\tau_0) \phi_{0,\nu}]\}_{\nu \in \mathbb{N}}$ and the dominated convergence theorem, the above sum converges to 0 as $\lambda \rightarrow 0$. 
\end{proof}

\begin{alemma}\label{lemma4}
Under Assumption \ref{A3}, $\Omega$ is positive definite. Let $c_{\delta}$ and $C_1$ be constants defined in Assumptions \ref{A3} and \ref{A6} respectively, and let $\delta$ be any eigenvalue of $\Omega$. Then $\delta > C_1c_{\delta} = \tilde{c}_{\delta}$.
\end{alemma}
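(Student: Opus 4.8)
The plan is to follow the template of the proof of Lemma~A.\ref{positive_definite_Omega} (the statement for $\Omega_F$), with one additional comparison step that brings in Assumption~\ref{A6}. Fix a nonzero $\zeta \in \R^p$ and set $\zeta_* = \zeta/\norm{\zeta}_{l^2}$. Using bilinearity of $V_{\tau_0}$ together with the fact that the $(i,j)$th entry of $\Omega$ is $V_{\tau_0}[D_{\theta^i}\eta(\tau_0) - D_h\eta(\tau_0)G^i,\, D_{\theta^j}\eta(\tau_0) - D_h\eta(\tau_0)G^j]$, I would first record that
\[
\zeta_*^T \Omega \zeta_* = V_{\tau_0}\!\left[ D_{\theta}\eta(\tau_0)\zeta_* - D_h\eta(\tau_0)(\zeta_*^T G) \right],
\]
where $\zeta_*^T G = \sum_{k} \zeta_*^k G^k \in \CH$ and we identify $D_{\theta}\eta(\tau_0)\zeta_* = \zeta_*^T D_\theta\eta(\tau_0)$ as usual.

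Next I would apply Assumption~\ref{A6} at $(\theta_*,h_*) = (\theta_0,h_0) = \tau_0$ to the pair $(\zeta_*, -\zeta_*^T G) \in \Q$, using linearity of $D_h\eta(\tau_0)$ to write $D_h\eta(\tau_0)(-\zeta_*^T G) = -D_h\eta(\tau_0)(\zeta_*^T G)$; this gives the lower bound
\[
V_{\tau_0}\!\left[ D_{\theta}\eta(\tau_0)\zeta_* - D_h\eta(\tau_0)(\zeta_*^T G) \right] \ge C_1\, V_{\tau_0}\!\left[ L_{\theta}\zeta_* - L_h(\zeta_*^T G) \right].
\]
Since $\norm{\zeta_*}_{l^2} = 1$ and $\zeta_*^T G \in \CH$, Assumption~\ref{A3}$(ii)$ applied with $g = \zeta_*^T G$ yields $V_{\tau_0}[L_\theta\zeta_* - L_h(\zeta_*^T G)] > c_\delta$. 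Combining the last three displays gives $\zeta_*^T\Omega\zeta_* > C_1 c_\delta$, hence $\zeta^T\Omega\zeta = \norm{\zeta}_{l^2}^2\,\zeta_*^T\Omega\zeta_* > C_1 c_\delta \norm{\zeta}_{l^2}^2 > 0$ for every nonzero $\zeta$ (recall $C_1 > 0$ by Assumption~\ref{A6} and $c_\delta > 0$ by Assumption~\ref{A3}$(ii)$), so $\Omega$ is positive definite. Finally, for any eigenvalue $\delta$ of the symmetric matrix $\Omega$ with unit eigenvector $\zeta_\delta$, $\delta = \zeta_\delta^T \Omega \zeta_\delta > C_1 c_\delta =: \tilde{c}_\delta$, which is the claim.

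I do not expect any real obstacle here; the argument is essentially the same as for $\Omega_F$. The only points that require a little care are (i) the sign and linearity bookkeeping that lets us feed the direction $(\zeta_*, -\zeta_*^T G)$ into Assumption~\ref{A6}, which is legitimate because that assumption is stated for all $(\zeta,g) \in \Q$, and (ii) remembering to invoke Assumption~\ref{A6} at all — it is the extra ingredient, absent from the $\Omega_F$ computation, needed to pass from the Fr\'echet-derivative operators $D_\theta\eta(\tau_0), D_h\eta(\tau_0)$ to the auxiliary operators $L_\theta, L_h$ before Assumption~\ref{A3}$(ii)$ can be applied.
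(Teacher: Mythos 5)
Your proof is correct and is exactly what the paper intends: the paper's own proof is the one-line remark ``Using Assumption \ref{A6}, the proof is similar to the one for Lemma A.\ref{positive_definite_Omega},'' and you have simply carried out that sketch in detail, inserting Assumption \ref{A6} at $(\theta_*,h_*)=\tau_0$ to pass from $D_\theta\eta(\tau_0),D_h\eta(\tau_0)$ to $L_\theta,L_h$ before invoking Assumption \ref{A3}$(ii)$.
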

\begin{proof}Using Assumption \ref{A6}, the proof is similar to the one for Lemma \ref{positive_definite_Omega}.
\end{proof}

Note that by Lemma \ref{lemma4}, the eigenvalues of $\Omega$ have a uniform lower bound independent of $\lambda$. Then as $\lambda \rightarrow 0$, we have 
\begin{align*}
\E\left[ (\tilde{\theta} - \theta_{0})^{T}(\tilde{\theta} - \theta_{0}) \right]&= \E\left[ a_n^{T}(\Omega + \Sigma_{\lambda})^{-2}a_n\right]\rightarrow \E\left[ a_n^{T}\Omega^{-2}a_n\right]\leq \tilde{c}_{\delta}^{-2} \E\left[ a_n^{T}a_n \right] = \tilde{c}_{\delta}^{-2} \sum_{i=1}^{p} \E\left[ (a_{n}^i)^2\right],
\end{align*}
where 
\[a_n = \bar{\alpha}_n - \sum_{\nu}\frac{\bar{\beta}_{\nu,n} - \lambda \rho_{0,\nu} h_{0,\nu}}{1+\lambda \rho_{0,\nu}}V_{\tau_0}[D_{\theta}\eta(\tau_0),D_{h}\eta(\tau_0)\phi_{0,\nu}],\]
and $a_{n}^i$ is the $i$th entry of $a_n$.

Before we proceed to derive the bound for $\E\left[( a_{n}^i)^2\right]$, we also need the following lemma, which is the same as Lemma 5.2 in \citeasnoun{gu_qiu_1993}.
\begin{alemma}\label{lemma6}
Under Assumption \ref{A9}, as $\lambda \rightarrow 0$,
\begin{align*}
\sum_{\nu} \frac{\lambda \rho_{0,\nu}}{(1+\lambda \rho_{0,\nu})^2} &= O (\lambda^{-\frac{1}{r}}),\quad \sum_{\nu} \frac{1}{(1+\lambda \rho_{0,\nu})^2} = O (\lambda^{-\frac{1}{r}}), \quad \sum_{\nu} \frac{1}{1+\lambda \rho_{0,\nu}} = O (\lambda^{-\frac{1}{r}}).
\end{align*}
\end{alemma}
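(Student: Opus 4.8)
The plan is to verify each of the three asymptotic bounds by replacing $\rho_{0,\nu}$ with its known growth rate and estimating the resulting sums by comparison with integrals. By Assumption \ref{A9} applied to the eigensystem at $(\theta_0,h_0)$ (as recorded at the end of Section \ref{Spectral decomposition}), we have $\rho_{0,\nu} \sim \nu^{r}$ for all sufficiently large $\nu$, with $r > 1$; that is, there exist constants $0 < b_1 \le b_2 < \infty$ and an index $\nu_0$ such that $b_1 \nu^{r} \le \rho_{0,\nu} \le b_2 \nu^{r}$ for all $\nu \ge \nu_0$. The finitely many terms with $\nu < \nu_0$ contribute an $O(1)$ amount to each sum, which is absorbed into the $O(\lambda^{-1/r})$ bound since $\lambda^{-1/r} \to \infty$; so it suffices to bound the tails $\sum_{\nu \ge \nu_0}$.

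First I would treat the third sum, $\sum_{\nu} (1+\lambda\rho_{0,\nu})^{-1}$. Using the lower bound on $\rho_{0,\nu}$, the tail is at most $\sum_{\nu \ge \nu_0} (1 + \lambda b_1 \nu^{r})^{-1}$, which I compare to $\int_0^{\infty} (1 + \lambda b_1 x^{r})^{-1}\,dx$. The substitution $u = (\lambda b_1)^{1/r} x$ turns this integral into $(\lambda b_1)^{-1/r}\int_0^{\infty}(1+u^{r})^{-1}\,du$, and the latter integral is a finite constant precisely because $r > 1$. This gives the $O(\lambda^{-1/r})$ bound. The first two sums are handled the same way: $\sum_{\nu}\lambda\rho_{0,\nu}(1+\lambda\rho_{0,\nu})^{-2}$ and $\sum_{\nu}(1+\lambda\rho_{0,\nu})^{-2}$ are each dominated, after using $b_1\nu^{r} \le \rho_{0,\nu} \le b_2\nu^{r}$, by constant multiples of $\int_0^{\infty}\lambda b_2 x^{r}(1+\lambda b_1 x^{r})^{-2}\,dx$ and $\int_0^{\infty}(1+\lambda b_1 x^{r})^{-2}\,dx$ respectively; the same scaling substitution $u=(\lambda b_1)^{1/r}x$ extracts a factor $(\lambda b_1)^{-1/r}$ and leaves the integrals $\int_0^\infty u^{r}(1+u^{r})^{-2}\,du$ and $\int_0^\infty (1+u^{r})^{-2}\,du$, both finite for $r>1$ (the first because the integrand decays like $u^{-r}$ at infinity). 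One should also note that each sum converges for fixed $\lambda>0$, again by the $r>1$ tail decay, so the manipulations are legitimate.

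There is no serious obstacle here; the only point requiring a little care is the passage from the sum to the integral. Since the summand in each case is eventually monotone decreasing in $\nu$ (for the first sum, $\lambda t/(1+\lambda t)^2$ is decreasing in $t$ once $\lambda t > 1$, hence decreasing in $\nu$ for $\nu$ large), the standard integral comparison $\sum_{\nu \ge \nu_1} g(\nu) \le \int_{\nu_1 - 1}^{\infty} g(x)\,dx$ applies on the eventual range, and the finitely many remaining terms are $O(1)$. This is exactly Lemma 5.2 of \citeasnoun{gu_qiu_1993}, whose argument transfers verbatim once Assumption \ref{A9} guarantees $\rho_{0,\nu} \sim \nu^{r}$, so I would simply cite that reference and indicate the scaling computation above as the mechanism.
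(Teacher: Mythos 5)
Your proof is correct, and it fills in exactly the argument that the paper delegates to Lemma 5.2 of Gu and Qiu (1993): the paper itself gives no proof of Lemma A.6, stating only that it "is the same as Lemma 5.2 in" that reference. Your integral-comparison-plus-scaling argument (using $\rho_{0,\nu} \sim \nu^{r}$ with $r>1$, substituting $u=(\lambda b_1)^{1/r}x$, and checking tail integrability of $u^r(1+u^r)^{-2}$, $(1+u^r)^{-2}$, and $(1+u^r)^{-1}$) is the standard mechanism behind that cited result, so you have not only matched the paper's treatment but made it self-contained.
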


We now return to our analysis for $\E\left[(a_{n}^i)^2\right]$. We have 
\begin{align*}
\E\left[ (a_{n}^{i})^2\right] &= \E\brac{\left[ \bar{\alpha}_{n}^{i} - \sum_{\nu}\frac{\bar{\beta}_{\nu,n} - \lambda \rho_{0,\nu} h_{0,\nu}}{1+\lambda \rho_{0,\nu}}V_{\tau_0}[D_{\theta^i}\eta(\tau_0),D_{h}\eta(\tau_0)\phi_{0,\nu}] \right]^2}\\
&\leq 2 \brac{ \E\left[ (\bar{\alpha}_{n}^{i})^2 \right] + \E\left[ \left(\sum_{\nu}\frac{\bar{\beta}_{\nu,n} - \lambda \rho_{0,\nu} h_{0,\nu}}{1+\lambda \rho_{0,\nu}}V_{\tau_0}[D_{\theta^i}\eta(\tau_0),D_{h}\eta(\tau_0)\phi_{0,\nu}]\right)^2 \right]  }.
\end{align*}
Note that $ \E\left[ (\bar{\alpha}_{n}^{i})^2 \right] = O(n^{-1})$, and by square summability of $\{V_{\tau_0}[D_{\theta^i}\eta(\tau_0),D_{h}\eta(\tau_0)\phi_{0,\nu}]\}_{\nu \in \mathbb{N}}$ and $\{h_{0,\nu}\}_{\nu \in \mathbb{N}}$, the dominated convergence theorem, the Cauchy-Schwarz inequality, $\E(\bar{\beta}^2_{\nu,n}) = n^{-1}$, and Lemma \ref{lemma6}, we have
\begin{align*}
&\E\left[ \left(\sum_{\nu}\frac{\bar{\beta}_{\nu,n} - \lambda \rho_{0,\nu} h_{0,\nu}}{1+\lambda \rho_{0,\nu}}V_{\tau_0}[D_{\theta^i}\eta(\tau_0),D_{h}\eta(\tau_0)\phi_{0,\nu}]\right)^2 \right]\\
&\rightarrow \E\left[ \left(\sum_{\nu}\frac{\bar{\beta}_{\nu,n}}{1+\lambda \rho_{0,\nu}}V_{\tau_0}[D_{\theta^i}\eta(\tau_0),D_{h}\eta(\tau_0)\phi_{0,\nu}]\right)^2 \right]\leq C\left[ \sum_{\nu} \frac{\E(\bar{\beta}^2_{\nu,n})}{(1+\lambda \rho_{0,\nu})^2}\right]=O(n^{-1}\lambda^{-\frac{1}{r}}).
\end{align*}
Therefore, we conclude that as $\lambda \rightarrow 0$,  $\E\left[ (a_{n}^{i})^2\right] = O(n^{-1}\lambda^{-\frac{1}{r}})$, which implies that 
\[\E\brac{ V_{\tau_0} [D_{\theta} \eta(\tau_0) (\tilde{\theta} - \theta_0)]} \leq c \E \left[ (\tilde{\theta}-\theta_0)^{T}(\tilde{\theta}-\theta_0) \right] = O(n^{-1}\lambda^{-\frac{1}{r}}).\]
This concludes the proof for the first bound in Lemma \ref{lemma1}. 

Now for the second bound in Lemma \ref{lemma1}, we see that 
\begin{align*}
&V_{\tau_0}[D_{h}\eta(\tau_0)(\tilde{h}-h_0)] = \sum_{\nu} \left( \tilde{h}_{\nu} - h_{0,\nu} \right)^2,\quad \lambda J(\tilde{h}- h_0) = \sum_{\nu}\lambda \rho^0_{\nu} \left( \tilde{h}_{\nu} - h_{0,\nu} \right)^2.
\end{align*}
Plugging in the formula of $\tilde{h}_{\nu}$ given in Section \ref{Linear approximation}, we get 
\begin{align*}
&\sum_{\nu} \left( \tilde{h}_{\nu} - h_{0,\nu} \right)^2 \\
&= \sum_{\nu} \brac{\frac{\bar{\beta}_{\nu,n} -\lambda \rho_{0,\nu} h_{0,\nu}}{1+\lambda \rho_{0,\nu}} - (\tilde{\theta}-\theta_0)^{T} \frac{V_{\tau_0}[D_{\theta}\eta(\tau_0),D_{h}\eta(\tau_0)\phi_{0,\nu}]}{1+\lambda \rho_{0,\nu}}}^2\\
&\leq C \brac{\sum_{\nu} \left(\frac{\bar{\beta}_{\nu,n} -\lambda \rho_{0,\nu} h_{0,\nu}}{1+\lambda \rho_{0,\nu}}\right)^2 + \sum_{\nu} \left[ (\tilde{\theta}-\theta_0)^{T} \frac{V_{\tau_0}[D_{\theta}\eta(\tau_0),D_{h}\eta(\tau_0)\phi_{0,\nu}]}{1+\lambda \rho_{0,\nu}}\right]^2}\\
&=C\left[ (I) + (II) \right].
\end{align*}
Since $\E(\bar{\beta}_{\nu,n}) = 0$, $\E(\bar{\beta}_{\nu,n}^2) = \frac{1}{n}$, and $\sum_{\nu} \rho_{0,\nu}h^2_{0,\nu} = J(h_0) < \infty$, by Lemma \ref{lemma6}, we have  
\begin{align*}
\E[(I)] 
&= n^{-1} \sum_{\nu} \frac{1}{(1+\lambda \rho_{0,\nu})^2} + \lambda \sum_{\nu}\frac{\lambda \rho_{0,\nu}}{(1+\lambda \rho_{0,\nu})^2} \rho_{0,\nu} h^2_{0,\nu}= O(n^{-1}\lambda^{-\frac{1}{r}} + \lambda).
\end{align*} 
If $a,b$ are $p \times 1$ vectors, then $a^Tb$ is $1\times 1$, which implies that $a^Tb = b^Ta$ and $(a^Tb)^2 = b^{T}aa^Tb$. Using this fact and the bound for $\E[ (\tilde{\theta}-\theta_0)^{T}(\tilde{\theta}-\theta_0)]$, we have 
\begin{align*}
\E[(II)] 
&=\E\brac{\sum_{\nu} \frac{V_{\tau_0}[D_{\theta}\eta(\tau_0),D_{h}\eta(\tau_0)\phi_{0,\nu}]^{T}}{1+\lambda \rho_{0,\nu}}(\tilde{\theta}-\theta_0)(\tilde{\theta}-\theta_0)^{T} \frac{V_{\tau_0}[D_{\theta}\eta(\tau_0),D_{h}\eta(\tau_0)\phi_{0,\nu}]}{1+\lambda \rho_{0,\nu}}}\\
&=\sum_{i,j=1}^{p} \left\lbrace \E\left[ (\tilde{\theta} - \theta_0)^i (\tilde{\theta} - \theta_0)^j \right] \right.\\
&\quad \quad \quad \left. \cdot \sum_{\nu}\frac{1}{(1+\lambda \rho_{0,\nu})^2}V_{\tau_0}[D_{\theta^i}\eta(\tau_0),D_{h}\eta(\tau_0)\phi_{0,\nu}]V_{\tau_0}[D_{\theta^j}\eta(\tau_0),D_{h}\eta(\tau_0)\phi_{0,\nu}] \right\rbrace\\
&\leq C \sum_{i,j=1}^{p}\E\left[ (\tilde{\theta} - \theta_0)^i (\tilde{\theta} - \theta_0)^j \right]\\
&\leq C \sum_{i,j=1}^{p}\brac{\E\left[ (\tilde{\theta}^i - \theta_0^i)^2\right]}^{\frac{1}{2}}\brac{\E\left[ (\tilde{\theta}^j - \theta_0^j)^2\right]}^{\frac{1}{2}}\\
&= O(n^{-1}\lambda^{-\frac{1}{r}}).
\end{align*}
Therefore,
\begin{align*}
\E\left[ V_{\tau_0}[D_{h}\eta(\tau_0)(\tilde{h}-h_0)] \right] 
= O(n^{-1}\lambda^{-\frac{1}{r}} + \lambda). 
\end{align*}
Similar analysis shows that  
\begin{align*}
\E\left[ \lambda J(\tilde{h}-h_0)\right] &= \E\left[\sum_{\nu}\lambda \rho_{0,\nu} \left( \tilde{h}_{\nu} - h_{0,\nu} \right)^2 \right] 
= O(n^{-1}\lambda^{-\frac{1}{r}} + \lambda). 
\end{align*} \renewcommand\qedsymbol{$\blacksquare$}
Hence, the second bound in Lemma \ref{lemma1} is established. 

\subsection*{Proof of Theorem \ref{theorem1}}
By Assumption \ref{A6}, Lemma \ref{lemma1} implies that 
\begin{align*}
\E\brac{ V_{\tau_0} [L_{\theta} (\tilde{\theta} - \theta_0)]} = O(n^{-1}\lambda^{-\frac{1}{r}}),\quad \E\brac{ V_{\tau_0}[L_h(\tilde{h}-h_0)] + \lambda J(\tilde{h}-h_0)} = O(n^{-1}\lambda^{-\frac{1}{r}} + \lambda).
\end{align*}
%
By the Cauchy-Schwarz inequality and completing the square, we have 
\begin{align*}
&V_{\tau_0}[L_{\theta} (\tilde{\theta} - \theta_0) + L_h(\tilde{h}-h_0)]\\
&= V_{\tau_0}[L_{\theta} (\tilde{\theta} - \theta_0) ] +V_{\tau_0}[ L_h(\tilde{h}-h_0)] + 2V_{\tau_0}[L_{\theta} (\tilde{\theta} - \theta_0), L_h(\tilde{h}-h_0)]\\
& \leq V_{\tau_0}[L_{\theta} (\tilde{\theta} - \theta_0) ] +V_{\tau_0}[ L_h(\tilde{h}-h_0)] + 2 V_{\tau_0}^{\frac{1}{2}}[L_{\theta} (\tilde{\theta} - \theta_0) ] V_{\tau_0}^{\frac{1}{2}}[ L_h(\tilde{h}-h_0)]\\
&= \left[ V_{\tau_0}^{\frac{1}{2}}[L_{\theta} (\tilde{\theta} - \theta_0) ] + V_{\tau_0}^{\frac{1}{2}}[ L_h(\tilde{h}-h_0)]\right] ^{2}\\
&= O_{p}(n^{-1}\lambda^{-\frac{1}{r}} + \lambda).
\end{align*}
Together with $\lambda J(\tilde{h}-h_0) = O_{p}(n^{-1}\lambda^{-\frac{1}{r}} + \lambda)$, we have the desired result.
\subsection*{Proof of existence and uniqueness of $(\hat{\theta},\hat{h})$}
In this section, adapting the frameworks in \citeasnoun{cox_1988}, \citeasnoun{CoxOsullivan}, \citeasnoun{o’sullivan_1990}, and \citeasnoun{ke2004existence},  we establish the existence and uniqueness of the semiparametric estimator of the penalized likelihood given by (3) for the case when $m=1$ in the main text of the paper. The proof for the case when $m>1$ can be carried out in a similar manner. Note that we have the same assumptions as given in Section 4.2 for the proof of consistency in the main text, with the exception of assuming the existence of $(\hat{\theta},\hat{h})$ in $\N_{\theta_0} \times \N_{h_0}$. For convenience, we drop the subscript $l=1$. Denote the $k$th order partial Fr\'echet derivative operators by $D^k_{a_1\ldots a_k} = D_{a_1} \ldots D_{a_k}$, where $a_i \in \{{\theta},h\}$ for $i = 1, \ldots, k$.
\subsubsection*{Linearization}
We extend the linearization technique used to approximate the systematic and stochastic components of the estimation error in \citeasnoun{CoxOsullivan} and \citeasnoun{o’sullivan_1990} to our semiparametric setting by using a bivariate Taylor series expansion for a nonlinear operator. Note that \citeasnoun{ke2004existence} used a similar analysis to show existence and uniqueness of a smoothing spline nonlinear nonparametric regression model. We first state the following proposition, whose proof is provided in \citeasnoun{ke2004existence}.
\begin{aprop}
Let $f: D(f) \subset X \times Y \to Z$, where $X$, $Y$ and $Z$ are Banach spaces. If $f''$ exists at $(x,y)$, then the partial Fr\'echet derivatives $f_{xx}$, $f_{xy}$, $f_{yx}$ and $f_{yy}$ exist at $(x,y)$. For any $h,a \in X$, $k,b \in Y$, 
\[f''(x,y)(h,k)(a,b) = f_{xx}(x,y)ha +f_{xy}(x,y)ka + f_{yx}(x,y)hb + f_{yy}(x,y)kb.\]
\end{aprop}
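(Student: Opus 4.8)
\noindent The plan is to derive the proposition entirely from the single hypothesis that $f' = Df$ is Fr\'echet differentiable at $(x,y)$, together with the elementary fact that a Fr\'echet derivative on a product Banach space splits along the coordinate subspaces. First equip $X \times Y$ with the norm $\norm{(u,v)} = \max\{\norm{u}_X,\norm{v}_Y\}$, so that the inclusions $u\mapsto(u,0)$ and $v\mapsto(0,v)$ are isometries and the coordinate projections have norm one. Since $f''$ exists at $(x,y)$, $f'$ exists on a neighborhood of $(x,y)$; restricting $f'(x',y')\in\mathscr{L}(X\times Y,Z)$ to the two coordinate subspaces produces the partial derivatives $f_x$ and $f_y$ on that neighborhood, with $f'(x',y')(a,b) = f_x(x',y')a + f_y(x',y')b$.

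Next I would unpack the definition of $f''(x,y) = (f')'(x,y) \in \mathscr{L}\big(X\times Y, \mathscr{L}(X\times Y, Z)\big)$:
\[
\norm{f'(x+h,y+k) - f'(x,y) - f''(x,y)(h,k)}_{\mathscr{L}(X\times Y,Z)} = o(\norm{(h,k)}).
\]
Evaluating the bounded operator inside this norm at a vector $(a,0)$ with $\norm{a}_X\le 1$ and using $f'(\cdot)(a,0) = f_x(\cdot)a$ — together with the fact that restricting a bounded operator to a coordinate subspace cannot increase its norm — gives
\[
\norm{f_x(x+h,y+k) - f_x(x,y) - [f''(x,y)(h,k)](\cdot,0)}_{\mathscr{L}(X,Z)} = o(\norm{(h,k)}).
\]
Setting $k=0$ (so that $\norm{(h,0)} = \norm{h}_X$ and the $o$-rate survives) exhibits $x'\mapsto f_x(x',y)$ as Fr\'echet differentiable at $x$ with derivative $h\mapsto[f''(x,y)(h,0)](\cdot,0)$, i.e.\ the partial second derivative characterized by $f_{xx}(x,y)ha = [f''(x,y)(h,0)](a,0)$. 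The other three cases — perturbation $(0,k)$ with evaluation $(a,0)$, perturbation $(h,0)$ with evaluation $(0,b)$, and perturbation $(0,k)$ with evaluation $(0,b)$ — are handled identically (using $f'(\cdot)(0,b) = f_y(\cdot)b$ for the latter two) and yield the partial derivatives characterized by $f_{xy}(x,y)ka = [f''(x,y)(0,k)](a,0)$, $f_{yx}(x,y)hb = [f''(x,y)(h,0)](0,b)$, and $f_{yy}(x,y)kb = [f''(x,y)(0,k)](0,b)$. In particular all four exist.

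Finally the stated identity is pure bilinearity: $f''(x,y)(h,k) = f''(x,y)(h,0) + f''(x,y)(0,k)$ in $\mathscr{L}(X\times Y,Z)$, and each summand, being linear in its argument, satisfies $[\,\cdot\,](a,b) = [\,\cdot\,](a,0) + [\,\cdot\,](0,b)$; substituting the four characterizations from the previous step gives
\[
f''(x,y)(h,k)(a,b) = f_{xx}(x,y)ha + f_{xy}(x,y)ka + f_{yx}(x,y)hb + f_{yy}(x,y)kb.
\]
The only place demanding care is the second paragraph: one must confirm that restriction to a coordinate subspace neither enlarges the operator norm of $f'(x+h,y+k) - f'(x,y) - f''(x,y)(h,k)$ nor destroys the $o(\norm{(h,k)})$ estimate once $(h,k)$ is itself restricted to a coordinate subspace, and that the limits extracted in this way are genuinely the iterated partial Fr\'echet derivatives appearing in the statement; everything else is bookkeeping. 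The same argument is carried out in \citeasnoun{ke2004existence}.
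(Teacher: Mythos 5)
The paper does not itself prove this proposition; it is stated and attributed to \citeasnoun{ke2004existence}. Your argument is the standard one (and, as you note, the one in the cited source): use the max norm on $X \times Y$ so coordinate inclusions are isometric, unpack the defining $o(\norm{(h,k)})$ estimate for $f''(x,y)$, restrict it in both the perturbation slot and the evaluation slot to each coordinate subspace to recover the four iterated partial Fr\'echet derivatives, then reassemble by bilinearity of $f''(x,y)(\cdot)(\cdot)$; your identification of the subscript convention $f_{xy}(x,y)ka = [f''(x,y)(0,k)](a,0)$ is the one forced by the type of the operands in the stated formula, and the step you flag as needing care (that restriction does not inflate the operator norm and that the $o$-rate survives setting $k=0$ or $h=0$) is handled correctly. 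The proof is correct and matches the cited reference.
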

By above theorem and the Taylor formula given in Chapter 1 Section 4 in \citeasnoun{ambrosetti1995primer}, we can write the first order Taylor series expansion of $f(x,y)$ as
\[f(x+h,y+k) = f(x,y) + f_x(x,y)h + f_y(x,y)k + R,\]
where $R$ is the remainder, given by
\begin{align*}
R &= \int_0^1 (1-t) f''(x+t h, y+t k)(h,k)(h,k) dt\\
&= \int_0^1 (1-t) \left[f_{xx}(x+t h, y+tk)hh + f_{xy}(x+t h, y+t k)kh\right.\\
&\qquad+ \left.f_{yx}(x+t h, y+t k)hk + f_{yy}(x+t h, y+tk)kk\right]\ dt.
\end{align*}

\subsubsection*{Linear expansions}\label{linear_expansion}
Since $D_{\theta}\mf{L}_{\lambda}(\theta,h)$ is a bounded linear functional on $\R^p$, by the Riesz representation theorem, there exists $Z_{\theta}(\theta,h) \in \R^p$ such that for any $a \in \R^p$, 
\[D_{\theta}\mf{L}_{\lambda}(\theta,h)a = \ip{Z_{\theta}(\theta,h),a}_{\R^p}.\]
Similarly, we can denote the Riesz representer of $D_h\mf{L}_{\lambda}(\theta,h)$ in $\CH$ by $Z_h(\theta,h) \in \CH$. For convenience, we use 
\[Z_{\theta}(\theta,h) = D_{\theta} \mf{L}_{\lambda}(\theta,h) \quad\text{ and }\quad Z_h(\theta,h) = D_h\mf{L}_{\lambda}(\theta,h)\]
to represent either the functionals or their Riesz representers in $\R^p$ and $\CH$, respectively.  
For any $\theta_0+a \in \N_{\theta_0}$ and $h_0+g \in \N_{h_0}$, the first order Taylor series expansions of $Z_h, Z_{\theta}$ at the true parameter $(\theta_0,h_0)$ are
\begin{align*}
Z_h(\theta_0+a, h_0+g) &= Z_h(\theta_0,h_0) + D_{\theta}Z_h(\theta_0,h_0)a + D_h Z_h(\theta_0,h_0)g +R_{h}(\theta_0,h_0)ag,\\
Z_{\theta}(\theta_0+a, h_0+g) &= Z_{\theta}(\theta_0,h_0) + D_{\theta}Z_{\theta}(\theta_0,h_0)a + D_h Z_{\theta}(\theta_0,h_0)g + R_{\theta}(\theta_0,h_0)ag,
\end{align*}
where 
\begin{align*}
R_h(\theta,h)ag &= \int_0^1 (1-t)\left[D_{\theta \theta}^2 Z_h(\theta + ta, h+t g)aa + D^2_{h \theta} Z_h(\theta + t a, h+tg)ag\right.\\
&\qquad+ \left.D^2_{\theta h} Z_h(\theta+ta, h+t g)ga + D^2_{h h} Z_h(\theta + ta, h+t g)gg\right]\ dt,\\
R_{\theta}(\theta,h)ag &= \int_0^1 (1-t)\left[D_{\theta \theta}^2 Z_{\theta}(\theta + t a, h+t g)aa + D^2_{h \theta} Z_{\theta}(\theta + t a, h+t g)ag\right.\\
&\qquad+ \left.D^2_{\theta h} Z_{\theta}(\theta+t a, h+tg)ga + D_{hh}^2 Z_{\theta}(\theta + t a, h+t g)gg\right]\ dt.
\end{align*}
For $Z_h(\theta,h)$, $u,v \in \CH$, $a \in \R^p$, we have
\begin{align*}
Z_h(\theta,h)u &= -\mu_{\tau_0}[D_h\eta(\theta,h)u] + \mu_{\tau}[D_h \eta(\theta,h)u] + \lambda J(h,u),\\
D_h Z_h(\theta,h)uv &= -\left\{\mu_{\tau_0}[D_{hh}^2 \eta(\theta,h)uv] - \mu_{\tau}[D_{hh}^2 \eta(\theta,h)uv]\right\}\\
& \quad + \left\{\mu_{\tau}[D_h \eta(\theta,h)v \cdot D_h \eta(\theta,h)u] - \mu_{\tau}[D_h \eta(\theta,h)v] \mu_{\tau}[D_h \eta(\theta,h)u]\right\}\\
& \quad + \lambda J(v,u),\\
D_{\theta} Z_h(\theta,h)ua &= -\left\{\mu_{\tau_0}[D^2_{\theta h} \eta(\theta,h)ua] - \mu_{\tau}[D^2_{\theta h} \eta(\theta,h)ua]\right\}\\
& \quad + \left\{\mu_{\tau}[D_{\theta} \eta(\theta,h)a \cdot D_h \eta(\theta,h)u] - \mu_{\tau}[D_{\theta} \eta(\theta, h)a] \mu_{\tau}[D_h \eta(\theta, h)u]\right\}.
\end{align*}
For $Z_{\theta}(\theta,h)$, $a,b \in \R^p$, $u \in \CH$, we have
\begin{align*}
Z_{\theta}(\theta,h)a &= -\mu_{\tau_0}[D_{\theta}\eta(\theta,h)a] + \mu_{\tau}[D_{\theta} \eta(\theta,h)a],\\
D_h Z_{\theta}(\theta,h)au &= -\left\{\mu_{\tau_0}[D^2_{h \theta} \eta(\theta,h)au] - \mu_{\tau}[D^2_{h \theta} \eta(\theta,h)au]\right\}\\
&\quad + \left\{\mu_{\tau}[D_{\theta} \eta(\theta,h)a \cdot D_h \eta(\theta,h)u] - \mu_{\tau}[D_{\theta} \eta(\theta,h)a] \mu_{\tau}[D_h \eta(\theta,h)u]\right\},\\
D_{\theta} Z_{\theta}(\theta,h)ab &= -\left\{\mu_{\tau_0}[D_{\theta \theta}^2 \eta(\theta,h)ab] - \mu_{\tau}[D_{\theta \theta}^2 \eta(\theta,h)ab]\right\}\\
&\quad + \left\{\mu_{\tau}[D_{\theta} \eta(\theta,h)a \cdot D_{\theta} \eta(\theta,h)b] - \mu_{\tau}[D_{\theta} \eta(\theta,h)a] \mu_{\tau}[D_{\theta} \eta(\theta,h)b]\right\}.
\end{align*}
For any $u,v \in \CH$, ${a,b \in \R^p}$, and $\tau = (\theta,h) \in \N_{\theta_0} \times \N_{h_0}$, define the operators $U_{\theta}(\theta,h)$ and $U_h(\theta,h)$ on $\R^p$ and $\CH$, respectively, such that
\begin{align*}
\ip{u,U_h(\theta,h)v}_{\CH} &= V_{\tau}[D_h\eta(\theta,h)u, D_h\eta(\theta,h)v],\\
\ip{a,U_{\theta}(\theta,h)b}_{\R^p} &= V_{\tau}[D_{\theta}\eta(\theta,h)a, D_{\theta}\eta(\theta,h)b].
\end{align*}
Note that these operators are well-defined by the Riesz representation theorem applied to the linear functionals
\[v \mapsto V_{\tau}[D_h\eta(\theta,h)u, D_h\eta(\theta,h)v], \qquad b \mapsto V_{\tau}[D_{\theta}\eta(\theta,h)a, D_{\theta}\eta(\theta,h)b],\]
which are bounded in the corresponding norms on $\CH$ and $\R^p$, respectively. Similarly, we also define $U_{h\theta}(\theta,h): \CH \to \R^p$ and $U_{\theta h}(\theta,h): \R^p \to \CH$ by
\begin{align*}
\ip{a, U_{h\theta}(\theta,h)u}_{\R^p} = V_{\tau}[D_{\theta}\eta(\theta,h)a, D_h\eta(\theta,h)u] = \ip{u,U_{\theta h}(\theta,h) a}_{\CH}.
\end{align*}
By Lemma S.2 in the supplement of \citeasnoun{cheng2015joint}, there exists a bounded linear operator $W_{\lambda}$ on $\CH$ such that
\[\ip{u,W_{\lambda}v} = \lambda J(u,v).\]
Therefore, 
\begin{equation}\label{systematic_linearizations}
\begin{aligned}
Z_h(\theta_0+a, h_0+g) &= Z_h(\theta_0,h_0) + G_h(\theta_0,h_0)g + U_{\theta h}(\theta_0,h_0)a + R_h(\theta_0,h_0)ag,\\
Z_{\theta}(\theta_0+a, h_0+g) &= Z_{\theta}(\theta_0,h_0) + U_{\theta}(\theta_0,h_0)a + U_{h\theta}(\theta_0,h_0)g + R_{\theta}(\theta_0,h_0)ag,
\end{aligned}
\end{equation}
where $G_h(\theta,h) = U_h(\theta,h) +  W_{\lambda}$. We provide the presentations of the remainder terms $R_h(\theta,h)ag$ and $R_{\theta}(\theta,h)ag$ in Section \ref{remainder_terms}. 

Suppose $(\theta_{\lambda},h_{\lambda})$ is a solution for $Z_{\theta}(\theta,h)= Z_h(\theta,h) = 0$. We define the systematic error as $(\theta_{\lambda} - \theta_0, h_{\lambda} - h_0)$. Ignoring the remainder terms, we can get an approximation to the systematic error by setting the system of equations \eqref{systematic_linearizations}  to 0 and solving for $\bar{\theta}_{\lambda} - \theta_0$, and $ \bar{h}_{\lambda}- h_0$, i.e.,
\begin{align*}
&Z_h(\theta_0,h_0) + G_h(\theta_0,h_0)(\bar{h}_{\lambda}- h_0) + U_{\theta h}(\theta_0,h_0)(\bar{\theta}_{\lambda} - \theta_0)=0,\\
&Z_{\theta}(\theta_0,h_0) + U_{\theta}(\theta_0,h_0)(\bar{\theta}_{\lambda} - \theta_0) + U_{h \theta}(\theta_0,h_0)(\bar{h}_{\lambda}- h_0)=0.
\end{align*}
By the Lax-Milgram theorem (Section 3.6 of \citeasnoun{aubin_1979}) and Assumptions 2 and 4 in the main text, for any $(\theta,h) \in \N_{\theta_0} \times \N_{h_0}$, the operators $G_h(\theta,h)$, $U_{\theta}(\theta,h)$ have bounded inverses on $\CH$ and $\R^p$, respectively. Let 
\begin{align*}
G_{hh}(\theta,h) &= (G_h-U_{\theta h} U_{\theta}^{-1}U_{h\theta})(\theta,h): \CH \to \CH,\\
G_{\theta\theta}(\theta,h) &= (U_{\theta} - U_{h\theta} G_{h}^{-1}U_{\theta h})(\theta,h) : \R^p \to \R^p.
\end{align*}
Assuming both operators above have bounded inverses for any ${(\theta,h) \in \N_{\theta_0} \times \N_{h_0}}$, we get 
\begin{align*}
\bar{h}_{\lambda} - h_0 &= -G_{hh}^{-1}(\theta_0,h_0) \left[Z_h(\theta_0,h_0) - U_{\theta h}(\theta_0,h_0) U_{\theta}^{-1}(\theta_0,h_0) Z_{\theta}(\theta_0,h_0)\right],\\
\bar{\theta}_{\lambda} - \theta_0 &= -G_{\theta\theta}^{-1}(\theta_0,h_0) \left[Z_{\theta}(\theta_0,h_0) - U_{h \theta}(\theta_0,h_0) G_h^{-1}(\theta_0,h_0) Z_h(\theta_0,h_0)\right].
\end{align*}

Next, we define the stochastic error as $(\hat{\theta} - \theta_{\lambda}, \hat{h} - h_{\lambda})$. Similar to the definition of $Z_{\theta}$ and $Z_{h}$, we let  
\[Z_{n \theta}(\theta,h) = D_{\theta} \mf{L}_{n,\lambda}(\theta,h)\quad\text{ and }\quad Z_{n h}(\theta,h) = D_{h}\mf{L}_{n,\lambda}(\theta,h).\]
The approximation of the stochastic errors can be obtained by the linearizations of $Z_{n\theta}$ and $Z_{nh}$. Since for $u,v \in \CH$ and $a,b \in \R^p$, we have
\begin{align*}
Z_{n\theta}(\theta,h)a  &= -\frac{1}{n} \sum_{i=1}^n D_{\theta}\eta(x_i;\theta,h)a + \mu_{\tau}\left[D_{\theta}\eta(\theta,h)a\right],\\
D_{\theta}Z_{n\theta}(\theta,h)ab &= -\left\{\frac{1}{n} \sum_{i=1}^n D_{\theta \theta}^2 \eta(x_i;\theta,h)ab - \mu_{\tau}\left[D_{\theta \theta}^2\eta(\theta,h)ab\right]\right\}\\
&\quad + \left\{\mu_{\tau}\left[D_{\theta}\eta(\theta,h)a \cdot D_{\theta}\eta(\theta,h)b\right] - \mu_{\tau}\left[D_{\theta}\eta(\theta,h)a\right] \mu_{\tau}\left[D_{\theta}\eta(\theta,h)b\right]\right\},\\
D_hZ_{n\theta}(\theta,h)au &= -\left\{\frac{1}{n} \sum_{i=1}^n D^2_{h \theta} \eta(x_i;\theta,h)au - \mu_{\tau}\left[D^2_{h \theta}\eta(\theta,h)au\right]\right\}\\
&\quad + \left\{\mu_{\tau}\left[D_{\theta}\eta(\theta,h)a \cdot D_h\eta(\theta,h)u\right] - \mu_{\tau}\left[D_{\theta}\eta(\theta,h)a\right] \mu_{\tau}\left[D_h\eta(\theta,h)u\right]\right\},\\
Z_{nh}(\theta,h)u  &= -\frac{1}{n} \sum_{i=1}^n D_h\eta(x_i;\theta,h)u + \mu_{\tau}\left[D_h\eta(\theta,h)u\right] + \lambda J(h,u),\\
D_{\theta}Z_{nh}(\theta,h)ua &= -\left\{\frac{1}{n} \sum_{i=1}^n D_{\theta h}^2 \eta(x_i;\theta,h)ua - \mu_{\tau}\left[D_{\theta h}^2 \eta(\theta,h)ua\right]\right\}\\
&\quad + \left\{\mu_{\tau}\left[D_h\eta(\theta,h)u \cdot D_{\theta}\eta(\theta,h)a\right] - \mu_{\tau}\left[D_h\eta(\theta,h)u\right] \mu_{\tau}\left[D_{\theta}\eta(\theta,h)a\right]\right\},\\
D_hZ_{n\theta}(\theta,h)uv &= -\left\{\frac{1}{n} \sum_{i=1}^n D_{hh}^2 \eta(x_i;\theta,h)uv - \mu_{\tau}\left[D_{hh}^2\eta(\theta,h)uv\right]\right\}\\
&\quad + \left\{\mu_{\tau}\left[D_h\eta(\theta,h)u \cdot D_h\eta(\theta,h)v\right] - \mu_{\tau}\left[D_h\eta(\theta,h)u\right] \mu_{\tau}\left[D_h\eta(\theta,h)v\right]\right\}\\
&\quad + \lambda J(v,u),
\end{align*}
for any $\theta_{\lambda}+a \in \N_{\theta_0}, h_{\lambda}+g \in \N_{h_0}$, the first order Taylor series expansions of $Z_{n \theta}$ and $Z_{n h}$ at $(\theta_{\lambda},h_{\lambda}) \in \N_{\theta_0} \times \N_{h_0}$ can be written as   
\begin{equation}\label{stochastic_linearization}
\begin{aligned}
Z_{n\theta}(\theta_{\lambda}+a, h_{\lambda}+g) &= Z_{n\theta}(\theta_{\lambda}, h_{\lambda}) + U_{\theta}(\theta_{\lambda}, h_{\lambda})a + U_{h\theta}(\theta_{\lambda}, h_{\lambda})g + e_{\theta}(\theta_{\lambda}, h_{\lambda})ag + R_{n\theta}(\theta_{\lambda}, h_{\lambda})ag,\\
Z_{nh}(\theta_{\lambda}+a, h_{\lambda}+g) &= Z_{nh}(\theta_{\lambda}, h_{\lambda}) + G_h(\theta_{\lambda}, h_{\lambda})g + U_{\theta h}(\theta_{\lambda}, h_{\lambda})a+ e_h(\theta_{\lambda}, h_{\lambda})ag + R_{nh}(\theta_{\lambda}, h_{\lambda})ag,
\end{aligned}
\end{equation}
where the error terms are given by
\begin{align*}
&e_{\theta}(\theta_{\lambda}, h_{\lambda})ag  = e_{\theta\theta}(\theta_{\lambda},h_{\lambda})a + e_{\theta h}(\theta_{\lambda},h_{\lambda})g,\\
&e_h (\theta_{\lambda},h_{\lambda})ag = e_{h\theta}(\theta_{\lambda},h_{\lambda})a + e_{hh}(\theta_{\lambda},h_{\lambda})g,\\
&e_{\theta\theta}(\theta,h)a =  -\frac{1}{n} \sum_{i=1}^{n} D_{\theta \theta}^2 \eta(X_{i};\theta,h)a + \mu_{\tau}\left[D_{\theta \theta}^2\eta(\theta,h)a\right],\\
&e_{\theta h}(\theta,h)g =  -\frac{1}{n} \sum_{i=1}^{n} D^2_{h \theta} \eta(X_{i};\theta,h)g + \mu_{\tau}\left[D_{\theta \theta}^2\eta(\theta,h)g\right],\\
&e_{h\theta}(\theta,h)a = -\frac{1}{n} \sum_{i=1}^{n} D^2_{\theta h}\eta(X_{i};\theta,h)a + \mu_{\tau}\left[D^2_{\theta h}\eta(\theta,h)a\right],\\
&e_{hh}(\theta,h)g = -\frac{1}{n} \sum_{i=1}^{n} D_{hh}^2\eta(X_{i};\theta,h)g + \mu_{\tau}\left[D_{hh}^2\eta(\theta,h)g\right],
\end{align*}
and $R_{nh}, R_{n\theta}$ are defined similarly to $R_{h}, R_{\theta}$ by replacing $Z_{h}, Z_{\theta}$ with $Z_{nh},Z_{n\theta}$, respectively. 
Recall that $(\hat{\theta},\hat{h})$ is the solution for $Z_{n\theta}(\theta,h)= Z_{nh}(\theta,h) = 0$. Dropping the error terms and the remainder terms, we can get an approximation to the stochastic error ${(\hat{\theta}-\theta_{\lambda},\hat{h} - h_{\lambda})}$ by setting the linearizations \eqref{stochastic_linearization} to 0 and solving for $\bar{\theta}_{n\lambda} - \theta_{\lambda}$, and $ \bar{h}_{n\lambda}- h_{\lambda}$. We get 
\begin{align*}
\bar{h}_{n\lambda} - h_{\lambda} = -G_{hh}^{-1}(\theta_{\lambda}, h_{\lambda})\left[Z_{nh}(\theta_{\lambda}, h_{\lambda}) - U_{\theta h}(\theta_{\lambda}, h_{\lambda}) U_{\theta}^{-1}(\theta_{\lambda}, h_{\lambda}) Z_{n\theta}(\theta_{\lambda}, h_{\lambda})\right],\\
\bar{\theta}_{n\lambda} - \theta_{\lambda} = -G_{\theta\theta}^{-1}(\theta_{\lambda}, h_{\lambda})\left[Z_{n\theta}(\theta_{\lambda}, h_{\lambda}) - U_{h\theta}(\theta_{\lambda}, h_{\lambda}) G_h^{-1}(\theta_{\lambda}, h_{\lambda}) Z_{nh}(\theta_{\lambda}, h_{\lambda})\right].
\end{align*}

\subsubsection*{Remainder terms} \label{remainder_terms}
By definition of $R_{h}(\theta,h)$ and $R_{\theta}(\theta,h)$ given in Section \ref{linear_expansion},  we need to find the second partial Fr\'echet derivatives of $Z_{\theta}(\theta,h)$ and $Z_{h}(\theta,h)$. 

For $u,v,w \in \CH$, $a,b,c \in \R^p$, we have
\begin{align*}
D_{hh}^2 Z_h(\theta,h)uvw &= -\left\{\mu_{\tau_0}\left[D_{hhh}^2 \eta(\theta,h) uvw\right] - \mu_{\tau}\left[D_{hhh}^2 \eta(\theta,h) uvw\right]\right\}\\
& \quad + \left\{\mu_{\tau}\left[D_{hh}^2\eta(\theta,h)uv \cdot D_h\eta(\theta,h)w\right] - \mu_{\tau}\left[D_h\eta(\theta,h)w\right] \mu_{\tau}\left[D_{hh}^2\eta(\theta,h)uv\right]\right\}\\
& \quad + \left\{\mu_{\tau}\left[D_{hh}^2\eta(\theta,h)vw \cdot D_h\eta(\theta,h)u\right] - \mu_{\tau}\left[D_h\eta(\theta,h)u\right] \mu_{\tau}\left[D_{hh}^2\eta(\theta,h)vw\right]\right\}\\
& \quad + \left\{\mu_{\tau}\left[D_{hh}^2\eta(\theta,h)uw \cdot D_h\eta(\theta,h)v\right] - \mu_{\tau}\left[D_h\eta(\theta,h)v\right] \mu_{\tau}\left[D_{hh}^2\eta(\theta,h)uw\right]\right\}\\
& \quad + \left\{\mu_{\tau}\left[D_h\eta(\theta,h)v \cdot D_h\eta(\theta,h)u \cdot D_h\eta(\theta,h)w\right]\right.\\
&\qquad - \mu_{\tau}\left[D_h\eta(\theta,h)v\right] \mu_{\tau}\left[D_h\eta(\theta,h)u \cdot D_h\eta(\theta,h)w\right]\\
&\qquad -\mu_{\tau}\left[D_h\eta(\theta,h)v \cdot D_h\eta(\theta,h)w\right] \mu_{\tau}\left[D_h\eta(\theta,h)u\right]\\
&\qquad -\mu_{\tau}\left[D_h\eta(\theta,h)v \cdot D_h\eta(\theta,h)u\right] \mu_{\tau}\left[D_h\eta(\theta,h)w\right]\\
&\qquad + \left.2\mu_{\tau}\left[D_h\eta(\theta,h)v\right] \mu_{\tau}\left[D_h\eta(\theta,h)u\right] \mu_{\tau}\left[D_h\eta(\theta,h)w\right]\right\},
\end{align*}
\begin{align*}
D_{\theta h}^2 Z_h(\theta,h)uva &= D_{h \theta}^2 Z_h(\theta,h)uav = D_{hh}^2 Z_{\theta}(\theta,h)auv\\
&= -\left\{\mu_{\tau_0}\left[D_{\theta}D_{hh}^2 \eta(\theta,h) uva\right] - \mu_{\tau}\left[D_{\theta}D_{hh}^2 \eta(\theta,h) uva\right]\right\}\\
& \quad + \left\{\mu_{\tau}\left[D_{hh}^2\eta(\theta,h)uv \cdot D_{\theta}\eta(\theta,h)a\right] - \mu_{\tau}\left[D_{hh}^2\eta(\theta,h)uv\right] \mu_{\tau}\left[D_{\theta}\eta(\theta,h)a\right]\right\}\\
& \quad + \left\{\mu_{\tau}\left[D_{\theta h}^2\eta(\theta,h)va \cdot D_h\eta(\theta,h)u\right] - \mu_{\tau}\left[D_{\theta h}^2\eta(\theta,h)va\right] \mu_{\tau}\left[D_h\eta(\theta,h)u\right]\right\}\\
& \quad + \left\{\mu_{\tau}\left[D_{\theta h}^2\eta(\theta,h)ua \cdot D_h\eta(\theta,h)v\right] - \mu_{\tau}\left[D_{\theta h}^2\eta(\theta,h)va\right] \mu_{\tau}\left[D_h\eta(\theta,h)v\right]\right\}\\
& \quad + \left\{\mu_{\tau}\left[D_h\eta(\theta,h)u \cdot D_h\eta(\theta,h)v \cdot D_{\theta}\eta(\theta,h)a\right]\right.\\
&\qquad - \mu_{\tau}\left[D_{\theta}\eta(\theta,h)a\right] \mu_{\tau}\left[D_h\eta(\theta,h)v \cdot D_h\eta(\theta,h)u\right]\\
&\qquad -\mu_{\tau}\left[D_h\eta(\theta,h)u\right]\mu_{\tau}\left[D_h\eta(\theta,h)v \cdot D_{\theta}\eta(\theta,h)a\right] \\
&\qquad -\mu_{\tau}\left[D_h\eta(\theta,h)v\right]\mu_{\tau}\left[D_{\theta}\eta(\theta,h)a \cdot D_h\eta(\theta,h)u\right]\\
&\qquad + \left.2\mu_{\tau}\left[D_h\eta(\theta,h)u\right] \mu_{\tau}\left[D_h\eta(\theta,h)v\right] \mu_{\tau}\left[D_{\theta}\eta(\theta,h)a\right]\right\},
\end{align*}
\begin{align*}
D_{\theta \theta}^2 Z_h(\theta,h)uab &=D_{\theta h}^2 Z_{\theta}(\theta,h)aub = D_{h \theta}^2 Z_{\theta}(\theta,h)abu\\
&= - \left\{\mu_{\tau_0}\left[D_{\theta \theta}^2D_h \eta(\theta,h) uab\right] - \mu_{\tau}\left[D_{\theta \theta}^2D_h \eta(\theta,h) uab\right]\right\}\\
& \quad + \left\{\mu_{\tau}\left[D^2_{\theta h}\eta(\theta,h)ua \cdot D_{\theta}\eta(\theta,h)b\right] - \mu_{\tau}\left[D_{\theta h}^2\eta(\theta,h)ua\right] \mu_{\tau}\left[D_{\theta}\eta(\theta,h)b\right]\right\}\\
&\quad + \left\{\mu_{\tau}\left[D_{\theta \theta}^2\eta(\theta,h)ab \cdot D_h\eta(\theta,h)u\right] - \mu_{\tau}\left[D_{\theta \theta}^2\eta(\theta,h)ab\right] \mu_{\tau}\left[D_h\eta(\theta,h)u\right]\right\}\\
&\quad + \left\{\mu_{\tau}\left[D_{\theta h}^2\eta(\theta,h)ub \cdot D_{\theta}\eta(\theta,h)a\right] - \mu_{\tau}\left[D_{\theta h}^2\eta(\theta,h)ub\right] \mu_{\tau}\left[D_{\theta}\eta(\theta,h)a\right]\right\}\\
&\quad + \left\{\mu_{\tau}\left[D_{\theta}\eta(\theta,h)a \cdot D_{\theta}\eta(\theta,h)b \cdot D_h\eta(\theta,h)u\right]\right.\\
&\qquad - \mu_{\tau}\left[D_{\theta}\eta(\theta,h)a\right] \mu_{\tau}\left[D_{\theta}\eta(\theta,h)b \cdot D_h\eta(\theta,h)u\right]\\
&\qquad -\mu_{\tau}\left[D_{\theta}\eta(\theta,h)b\right]\mu_{\tau}\left[D_{\theta}\eta(\theta,h)a \cdot D_h\eta(\theta,h)u\right] \\
&\qquad -\mu_{\tau}\left[D_h\eta(\theta,h)u\right]\mu_{\tau}\left[D_{\theta}\eta(\theta,h)a \cdot D_{\theta}\eta(\theta,h)b\right]\\
&\qquad + \left.2\mu_{\tau}\left[D_h\eta(\theta,h)u\right] \mu_{\tau}\left[D_{\theta}\eta(\theta,h)a\right] \mu_{\tau}\left[D_{\theta}\eta(\theta,h)b\right]\right\}.
\end{align*}
\begin{align*}
D_{\theta \theta}^2 Z_{\theta}(\theta,h)abc 
&= - \left\{\mu_{\tau_0}\left[D_{\theta \theta \theta}^3\eta(\theta,h) abc\right] - \mu_{\tau}\left[D_{\theta \theta \theta}^3 \eta(\theta,h) abc\right]\right\}\\
&\quad + \left\{\mu_{\tau}\left[D_{\theta \theta}^2\eta(\theta,h)ab \cdot D_{\theta}\eta(\theta,h)c\right] - \mu_{\tau}\left[D_{\theta}\eta(\theta,h)c\right] \mu_{\tau}\left[D_{\theta \theta}^2\eta(\theta,h)ab\right]\right\}\\
&\quad + \left\{\mu_{\tau}\left[D_{\theta \theta}^2\eta(\theta,h)bc \cdot D_{\theta}\eta(\theta,h)a\right] - \mu_{\tau}\left[D_{\theta}\eta(\theta,h)a\right] \mu_{\tau}\left[D_{\theta \theta}^2\eta(\theta,h)bc\right]\right\}\\
&\quad + \left\{\mu_{\tau}\left[D_{\theta \theta}^2\eta(\theta,h)ac \cdot D_{\theta}\eta(\theta,h)b\right] - \mu_{\tau}\left[D_{\theta}\eta(\theta,h)b\right] \mu_{\tau}\left[D_{\theta \theta}^2\eta(\theta,h)ac\right]\right\}\\
&\quad + \left\{\mu_{\tau}\left[D_{\theta}\eta(\theta,h)a \cdot D_{\theta}\eta(\theta,h)b \cdot D_{\theta}\eta(\theta,h)c\right]\right.\\
&\qquad - \mu_{\tau}\left[D_{\theta}\eta(\theta,h)a\right] \mu_{\tau}\left[D_{\theta}\eta(\theta,h)b \cdot D_{\theta}\eta(\theta,h)c\right]\\
&\qquad -\mu_{\tau}\left[D_{\theta}\eta(\theta,h)b\right]\mu_{\tau}\left[D_{\theta}\eta(\theta,h)a \cdot D_{\theta}\eta(\theta,h)c\right] \\
&\qquad -\mu_{\tau}\left[D_{\theta}\eta(\theta,h)c\right]\mu_{\tau}\left[D_{\theta}\eta(\theta,h)a \cdot D_{\theta}\eta(\theta,h)b\right]\\
&\qquad + \left.2\mu_{\tau}\left[D_{\theta}\eta(\theta,h)a\right] \mu_{\tau}\left[D_{\theta}\eta(\theta,h)b\right] \mu_{\tau}\left[D_{\theta}\eta(\theta,h)c\right]\right\},
\end{align*}

By replacing the terms $\mu_{\tau_0}[\cdot(x)]$ with $\frac{1}{n}\sum_{i=1}^n \cdot(x_i)$ in each term above, we can have the second partial Fr\'echet derivatives of $Z_{n\theta}(\theta,h)$ and $Z_{n h}(\theta,h)$  for the remainder terms  $R_{nh}(\theta,h)$ and $R_{n\theta}(\theta,h)$. 

\subsubsection*{Bounds for the remainders}
We see that the magnitude of the remainder terms $R_{\theta}$, $R_{h}$, $R_{n\theta}$,  $R_{nh}$, $e_{\theta}$, and $e_{h}$ determine how accurate $(\bar{\theta}_{\lambda} - \theta_0, \bar{h}_{\lambda} - h_0)$ and $(\bar{\theta}_{n\lambda} - \theta_{\lambda}, \bar{h}_{n\lambda} - h_{\lambda})$ are as approximations of the systematic error and the stochastic error, respectively. To obtain measures of these terms, we first define that for $\lambda > 0$, $\theta_1,\theta_2 \in \N_{\theta_0}$, $h_1,h_2 \in \N_{h_0}$, and unit elements $u_1,u_2 \in \R^p$ and $v_1, v_2 \in \CH$, 
\begin{align*}
K_h^1 &= \sup_{\substack{\theta_1,\theta_2\\h_1,h_2}} \sup_{v_1,v_2} \left\|G_{hh}^{-1}(\theta_1,h_1) \left[D_{hh}^2Z_h(\theta_2,h_2)v_1v_2 - U_{\theta h}(\theta_1,h_1) U_{\theta}^{-1}(\theta_1,h_1) D_{hh}^2Z_{\theta}(\theta_2,h_2)v_1v_2\right]\right\|_{H^{s}},\\
K_h^2 &= \sup_{\substack{\theta_1,\theta_2\\h_1,h_2}} \sup_{v_1,u_1} \left\|G_{hh}^{-1}(\theta_1,h_1) \left[D^2_{\theta h} Z_h(\theta_2,h_2)v_1u_1 - U_{\theta h}(\theta_1,h_1) U_{\theta}^{-1}(\theta_1,h_1) D^2_{\theta h} Z_{\theta}(\theta_2,h_2)v_1u_1\right]\right\|_{H^{s}},\\
K_h^3 &= \sup_{\substack{\theta_1,\theta_2\\h_1,h_2}} \sup_{u_1,v_1} \left\|G_{hh}^{-1}(\theta_1,h_1) \left[D_{h \theta}^2 Z_h(\theta_2,h_2)u_1v_1 - U_{\theta h}(\theta_1,h_1) U_{\theta}^{-1}(\theta_1,h_1) D_{h \theta}^2 Z_{\theta}(\theta_2,h_2)u_1v_1 \right] \right\|_{H^{s}},\\
K_h^4 &= \sup_{\substack{\theta_1,\theta_2\\h_1,h_2}} \sup_{u_1,u_2} \left\|G_{hh}^{-1}(\theta_1,h_1) \left[D^2_{\theta \theta} Z_h(\theta_2,h_2)u_1u_2 - U_{\theta h}(\theta_1,h_1) U_{\theta}^{-1}(\theta_1,h_1) D^2_{\theta \theta} Z_{\theta}(\theta_2,h_2)u_1u_2\right]\right\|_{H^{s}},\\
K_{\theta}^1 &= \sup_{\substack{\theta_1,\theta_2\\h_1,h_2}} \sup_{v_1,v_2} \left\|G_{\theta\theta}^{-1}(\theta_1,h_1) \left[D^2_{h h} Z_{\theta}(\theta_2,h_2)v_1v_2 - U_{h \theta}(\theta_1,h_1) G_h^{-1}(\theta_1,h_1) D^2_{h h} Z_{h}(\theta_2,h_2)v_1v_2\right]\right\|_{\R^{p}},\\
K_{\theta}^2 &= \sup_{\substack{\theta_1,\theta_2\\h_1,h_2}} \sup_{v_1,u_1} \left\|G_{\theta\theta}^{-1}(\theta_1,h_1) \left[D^2_{\theta h}Z_{\theta}(\theta_2,h_2)v_1u_1 - U_{h \theta}(\theta_1,h_1) G_h^{-1}(\theta_1,h_1) D^2_{\theta h}Z_{h}(\theta_2,h_2)v_1u_1\right]\right\|_{\R^{p}},\\
K_{\theta}^3 &= \sup_{\substack{\theta_1,\theta_2\\h_1,h_2}} \sup_{u_1,v_1} \left\|G_{\theta\theta}^{-1}(\theta_1,h_1) \left[D^2_{h \theta }Z_{\theta}(\theta_2,h_2)u_1v_1 - U_{h \theta}(\theta_1,h_1) G_h^{-1}(\theta_1,h_1) D^2_{h \theta }Z_{h}(\theta_2,h_2)u_1v_1\right]\right\|_{\R^p},\\
K_{\theta}^4 &= \sup_{\substack{\theta_1,\theta_2\\h_1,h_2}} \sup_{u_1,u_2} \left\|G_{\theta\theta}^{-1}(\theta_1,h_1) \left[D^2_{\theta \theta }Z_{\theta}(\theta_2,h_2)u_1u_2 - U_{h \theta}(\theta_1,h_1) G_h^{-1}(\theta_1,h_1) D^2_{\theta \theta }Z_{h}(\theta_2,h_2)u_1u_2\right]\right\|_{\R^p}.
\end{align*}
For ${i=1,2,3,4}$, we also define $K^{i}_{nh}, K^i_{n\theta}$ by replacing $Z_{\theta},Z_{h}$ with $Z_{n\theta}, Z_{nh}$ in $K^i_{h}$ and $K^i_{\theta}$, respectively. In addition, for $\lambda > 0$, $\theta_1,\theta_2 \in \N_{\theta_0}$, $h_1,h_2 \in \N_{h_0}$, and unit elements $u_1,u_2 \in \R^p$ and $v_1, v_2 \in \CH$, we define
\begin{align*}
E_{nh}^{12} &= \sup_{\theta_1,h_1} \sup_{u_1} \left\|G_{hh}^{-1}(\theta_1,h_1) e_{h\theta}(\theta_1,h_1)u_1\right\|_{\CH},\\
E_{nh}^{11} &= \sup_{\theta_1,h_1} \sup_{v_1} \left\|G_{hh}^{-1}(\theta_1,h_1) e_{hh}(\theta_1,h_1)v_1\right\|_{\CH},\\
E_{nh}^{22} &= \sup_{\theta_1,h_1} \sup_{u_1} \left\|G_{hh}^{-1}(\theta_1,h_1) U_{\theta h}(\theta_1,h_1) U_{\theta}^{-1}(\theta_1,h_1) e_{\theta\theta}(\theta_1,h_1)u_1\right\|_{\CH},\\
E_{nh}^{21} &= \sup_{\theta_1,h_1} \sup_{v_1} \left\|G_{hh}^{-1}(\theta_1,h_1) U_{\theta h}(\theta_1,h_1) U_{\theta}^{-1}(\theta_1,h_1) e_{\theta h}(\theta_1,h_1)v_1\right\|_{\CH},\\
E_{n\theta}^{22} &= \sup_{\theta_1,h_1} \sup_{u_1} \left\|G_{\theta\theta}^{-1}(\theta_1,h_1) e_{\theta\theta}(\theta_1,h_1)u_1\right\|_{\R^p},\\
E_{n\theta}^{21} &= \sup_{\theta_1,h_1} \sup_{v_1} \left\|G_{\theta\theta}^{-1}(\theta_1,h_1) e_{\theta h}(\theta_1,h_1)v_1\right\|_{\R^p},\\
E_{n\theta}^{12} &= \sup_{\theta_1,h_1} \sup_{u_1} \left\|G_{\theta\theta}^{-1}(\theta_1,h_1) U_{h\theta}(\theta_1,h_1) G_h^{-1}(\theta_1,h_1) e_{h\theta}(\theta_1,h_1)u_1\right\|_{\R^p},\\
E_{n\theta}^{11} &= \sup_{\theta_1,h_1} \sup_{v_1} \left\|G_{\theta\theta}^{-1}(\theta_1,h_1) U_{h\theta}(\theta_1,h_1) G_h^{-1}(\theta_1,h_1) e_{hh}(\theta_1,h_1)v_1\right\|_{\R^p}.
\end{align*}
Therefore, for any $a \in \R^p$ and $g \in \CH$, standard analysis allows us to have the following bounds for the remainder terms for the systematic error and the stochastic error,
\begin{equation}\label{bound_remainder_Zh}
\begin{aligned}
&\norm{G_{hh}^{-1}(\theta_0,h_0) \left[R_h(\theta_0,h_0)ag - U_{\theta h}(\theta_0,h_0) U_{\theta}^{-1}(\theta_0,h_0) R_{\theta}(\theta_0,h_0)ag\right]}_{\CH}\\
&\qquad \le \frac{1}{2}\left[\left(K_h^1\norm{g}_{\CH} + K_h^2\norm{a}_{\R^p}\right) \norm{g}_{\CH} + \left(K_h^3\norm{g}_{\CH} + K_h^4\norm{a}_{\R^p}\right) \norm{a}_{\R^p}\right],
\end{aligned}
\end{equation}
\begin{equation}\label{bound_remainder_Ztheta}
\begin{aligned}
&\norm{G_{\theta\theta}^{-1}(\theta_0,h_0) \left[R_{\theta}(\theta_0,h_0)ag - U_{h\theta}(\theta_0,h_0) G_h^{-1}(\theta_0,h_0) R_h(\theta_0,h_0)ag\right]}_{\R^p}\\
&\qquad \le \frac{1}{2}\left[\left(K_{\theta}^1\norm{g}_{\CH} + K_{\theta}^2\norm{a}_{\R^p}\right) \norm{g}_{\CH} + \left(K_{\theta}^3\norm{g}_{\CH} + K_{\theta}^4\norm{a}_{\R^p}\right) \norm{a}_{\R^p}\right],
\end{aligned}
\end{equation}
\begin{equation}\label{bound_remainder_Znh}
\begin{aligned}
&\norm{G_{hh}^{-1}(\theta_{\lambda},h_{\lambda}) \left\{\left[e_h(\theta_{\lambda},h_{\lambda}) + R_{nh}(\theta_{\lambda},h_{\lambda})\right]ag - U_{\theta h}(\theta_{\lambda},h_{\lambda}) U_{\theta}^{-1}(\theta_{\lambda},h_{\lambda}) \left[e_{\theta}(\theta_{\lambda},h_{\lambda}) + R_{n\theta}(\theta_{\lambda},h_{\lambda})\right]ag\right\}}_{\CH}\\
&\qquad \le E_{nh}^1\norm{g}_{\CH} + E_{nh}^2\norm{a}_{\R^p} + \frac{1}{2}\left[(K_{nh}^1\norm{g}_{\CH} + K_{nh}^2\norm{a}_{\R^p})\norm{g}_{\CH} + (K_{nh}^3\norm{g}_{\CH} + K_{nh}^4\norm{a}_{\R^p})\norm{a}_{\R^p}\right],
\end{aligned}
\end{equation}
where $E_{nh}^1 = E_{nh}^{11} + E_{nh}^{21}$ and $E_{nh}^2 = E_{nh}^{12} + E_{nh}^{22}$, and
\begin{equation}\label{bound_remainder_Zntheta}
\begin{aligned}
&\norm{G_{\theta\theta}^{-1}(\theta_{\lambda},h_{\lambda}) \left\{\left[e_{\theta}(\theta_{\lambda},h_{\lambda}) + R_{n\theta}(\theta_{\lambda},h_{\lambda})\right]ag - U_{h\theta}(\theta_{\lambda},h_{\lambda}) G_h^{-1}(\theta_{\lambda},h_{\lambda}) \left[e_h(\theta_{\lambda},h_{\lambda}) + R_{nh}(\theta_{\lambda},h_{\lambda})\right]ag\right\}}_{\R^p}\\
&\qquad \le E_{n\theta}^1\norm{g}_{\CH} + E_{n\theta}^2\norm{a}_{\R^p} + \frac{1}{2}\left[(K_{n\theta}^1\norm{g}_{\CH} + K_{n\theta}^2\norm{a}_{\R^p})\norm{g}_{\CH} + (K_{n\theta}^3\norm{g}_{\CH} + K_{n\theta}^4\norm{a}_{\R^p})\norm{a}_{\R^p}\right],
\end{aligned}
\end{equation}
where $E_{n\theta}^1 = E_{n\theta}^{21} + E_{n\theta}^{11}$ and $E_{n\theta}^2 = E_{n\theta}^{22} + E_{n\theta}^{12}$.

\subsubsection*{Proof of Existence and Uniqueness}
We are now ready to show the existence and uniqueness of $(\theta_{\lambda}, h_{\lambda})$ and $(\hat{\theta},\hat{h})$ in the neighborhood $\N_{\theta_0} \times \N_{h_0}$. Let 
\begin{align*}
d_{\theta}(\lambda) &= \norm{\bar{\theta}_{\lambda} - \theta_0}_{\R^p}, \\
d_{h}(\lambda) &= \norm{\bar{h}_{\lambda}- h_0}_{\CH},\\
r_{\theta}(\lambda) &= (K_h^3 + K_{\theta}^3)d_h(\lambda) + (K_h^4 + K_{\theta}^4) d_{\theta}(\lambda),\\
r_{h}(\lambda) &= (K_h^1 + K_{\theta}^1)d_h(\lambda) + (K_h^2 + K_{\theta}^2) d_{\theta}(\lambda),\\
S_{\theta,\theta_1}(\gamma) &= \{a \in \R^{p}: \norm{a-\theta_1}_{\R^p} \le \gamma\} \text{ for }\theta_1 \in \R^p, \\
S_{h,h_1}(\gamma) &= \{u \in \CH: \norm{g-h_1}_{\CH} \le \gamma\} \text{ for }h_1 \in \CH,\\
S_{\theta}(\gamma) &= S_{\theta,0}(\gamma),\\
S_{h}(\gamma) &= S_{h,0}(\gamma).
\end{align*}
One can get the following theorem for the existence and uniqueness of $(\theta_{\lambda},h_{\lambda})$ via a contraction mapping argument. 
\begin{atheorem}(Existence of $(\theta_{\lambda},h_{\lambda})$ and the bias approximation) \label{existence of systematic estimator}
If $d_{\theta}(\lambda)\rightarrow 0, d_{h}(\lambda) \rightarrow 0, r_{\theta}(\lambda) \rightarrow 0, r_{h}(\lambda) \rightarrow 0$ as $\lambda \rightarrow 0$, there exist $\lambda_0 >0$ such that for $\lambda \in [0,\lambda_0]$, there are unique $\theta_{\lambda} \in S_{\theta,\theta_0}(2d_{\theta}(\lambda))$ and $h_{\lambda} \in S_{h,h_0}(2d_h(\lambda))$ satisfying $Z_{\theta}(\theta_{\lambda},h_{\lambda})=0, Z_h(\theta_{\lambda},h_{\lambda})=0$, and $(\theta_{\lambda},h_{\lambda}) \in \N_{\theta_0} \times \N_{h_0}$. In addition, as $\lambda \to 0$,
\[\left\|\bar{\theta}_{\lambda} - \theta_{\lambda}\right\|_{\R^p} + \left\|\bar{h}_{\lambda} - h_{\lambda}\right\|_{\CH} \le 4 \left[r_h(\lambda) d_h(\lambda) + r_{\theta}(\lambda) d_{\theta}(\lambda)\right].\] 
\end{atheorem}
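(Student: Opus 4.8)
The plan is to solve the system $Z_\theta(\theta,h)=Z_h(\theta,h)=0$ by a Banach fixed--point argument built on the systematic linearizations \eqref{systematic_linearizations}. Writing $\theta=\theta_0+a$, $h=h_0+g$, setting both equations in \eqref{systematic_linearizations} to zero, and eliminating $a$ via the Schur complement --- recall $G_h(\theta_0,h_0)$ and $U_\theta(\theta_0,h_0)$ have bounded inverses by Lax--Milgram (Assumptions \ref{A2} and \ref{A4}), and $G_{hh}(\theta_0,h_0)$, $G_{\theta\theta}(\theta_0,h_0)$ have bounded inverses by the standing assumption in the linear--expansion step --- produces a map $T=(T_\theta,T_h)$ on $\R^p\times\CH$, with all operators below evaluated at $(\theta_0,h_0)$:
\[
T_\theta(a,g) = (\bar\theta_\lambda-\theta_0) - G_{\theta\theta}^{-1}\!\left[R_\theta(\theta_0,h_0)ag - U_{h\theta}G_h^{-1}R_h(\theta_0,h_0)ag\right],
\]
\[
T_h(a,g) = (\bar h_\lambda-h_0) - G_{hh}^{-1}\!\left[R_h(\theta_0,h_0)ag - U_{\theta h}U_\theta^{-1}R_\theta(\theta_0,h_0)ag\right].
\]
By construction the remainder--free part of $T$ is exactly $(\bar\theta_\lambda-\theta_0,\bar h_\lambda-h_0)$, and straightforward algebra shows $(a^*,g^*)$ is a fixed point of $T$ if and only if $(\theta_0+a^*,h_0+g^*)$ solves $Z_\theta=Z_h=0$.

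\textbf{Self--mapping.} Let $\mathcal B_\lambda = S_{\theta,\theta_0}(2d_\theta(\lambda))\times S_{h,h_0}(2d_h(\lambda))$; since $d_\theta(\lambda),d_h(\lambda)\to 0$, for $\lambda$ small we have $\mathcal B_\lambda\subset\N_{\theta_0}\times\N_{h_0}$. For $(a,g)\in\mathcal B_\lambda$, substituting $\|a\|_{\R^p}\le 2d_\theta(\lambda)$ and $\|g\|_{\CH}\le 2d_h(\lambda)$ into the remainder bounds \eqref{bound_remainder_Zh}, \eqref{bound_remainder_Ztheta} and invoking the definitions of $r_\theta(\lambda)$, $r_h(\lambda)$ gives that $\|T_\theta(a,g)-(\bar\theta_\lambda-\theta_0)\|_{\R^p}$ and $\|T_h(a,g)-(\bar h_\lambda-h_0)\|_{\CH}$ are both at most $2\left[r_h(\lambda)d_h(\lambda)+r_\theta(\lambda)d_\theta(\lambda)\right]$. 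Combined with $\|\bar\theta_\lambda-\theta_0\|_{\R^p}=d_\theta(\lambda)$ and $\|\bar h_\lambda-h_0\|_{\CH}=d_h(\lambda)$, the hypotheses $r_\theta(\lambda),r_h(\lambda)\to 0$ guarantee that for $\lambda$ below some $\lambda_0$ these right--hand sides stay below $2d_\theta(\lambda)$ and $2d_h(\lambda)$ respectively, so $T(\mathcal B_\lambda)\subset\mathcal B_\lambda$.

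\textbf{Contraction.} For $(a_1,g_1),(a_2,g_2)\in\mathcal B_\lambda$ the difference $T(a_1,g_1)-T(a_2,g_2)$ reduces to $G_{\theta\theta}^{-1}$ and $G_{hh}^{-1}$ applied to differences of the remainders $R_\theta(\theta_0,h_0)\,\cdot\,$ and $R_h(\theta_0,h_0)\,\cdot\,$. Since each remainder is an integral of second partial Fr\'echet derivatives of $Z_\theta$ or $Z_h$ (hence of $\eta$, three times continuously Fr\'echet differentiable by Assumption \ref{A4}) along the segment from $(\theta_0,h_0)$, paired bilinearly with $(a,g)$, splitting such a difference into a ``change of evaluation point'' part (controlled by the bounded third derivatives) and a ``change of bilinear argument'' part (controlled by the uniformly bounded second derivatives behind the finite constants $K^i_\theta,K^i_h$) yields a Lipschitz estimate $\|T(a_1,g_1)-T(a_2,g_2)\|\le C\bigl(d_\theta(\lambda)+d_h(\lambda)\bigr)\bigl(\|a_1-a_2\|_{\R^p}+\|g_1-g_2\|_{\CH}\bigr)$ on $\mathcal B_\lambda$, with $C$ independent of $\lambda$. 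Shrinking $\lambda_0$ so that $C(d_\theta(\lambda)+d_h(\lambda))<1$, $T$ is a contraction on the closed set $\mathcal B_\lambda$, so the Banach fixed--point theorem gives a unique $(\theta_\lambda-\theta_0,h_\lambda-h_0)\in\mathcal B_\lambda$; this is the asserted unique $(\theta_\lambda,h_\lambda)$ with $Z_\theta(\theta_\lambda,h_\lambda)=Z_h(\theta_\lambda,h_\lambda)=0$ and $(\theta_\lambda,h_\lambda)\in\N_{\theta_0}\times\N_{h_0}$.

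\textbf{Bias approximation and the main obstacle.} Evaluating $T$ at its fixed point and applying the bounds from the self--mapping step with $(a,g)=(\theta_\lambda-\theta_0,h_\lambda-h_0)\in\mathcal B_\lambda$ gives $\|\bar\theta_\lambda-\theta_\lambda\|_{\R^p}\le 2[r_h(\lambda)d_h(\lambda)+r_\theta(\lambda)d_\theta(\lambda)]$ and $\|\bar h_\lambda-h_\lambda\|_{\CH}\le 2[r_h(\lambda)d_h(\lambda)+r_\theta(\lambda)d_\theta(\lambda)]$, and adding these yields the stated bound $4[r_h(\lambda)d_h(\lambda)+r_\theta(\lambda)d_\theta(\lambda)]$. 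I expect the main obstacle to be the bookkeeping in the self--mapping and contraction steps: one must verify the remainder and Lipschitz estimates \emph{uniformly} over $\N_{\theta_0}\times\N_{h_0}$ and track the dependence on $d_\theta(\lambda),d_h(\lambda)$ carefully, so that the contraction constant genuinely tends to $0$ and the images stay inside $\mathcal B_\lambda$. This rests on the finiteness of the constants $K^i_\theta,K^i_h$, which follows from the boundedness of $G_h^{-1},U_\theta^{-1},G_{hh}^{-1},G_{\theta\theta}^{-1}$ (Lax--Milgram) and the continuity of the second and third Fr\'echet derivatives of $\eta$ (Assumption \ref{A4}).
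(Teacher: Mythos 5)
Your map $T$ is exactly the paper's fixed-point map $\vec F$: expanding $Z_\theta(\theta_0+\zeta,h_0+g)$ and $Z_h(\theta_0+\zeta,h_0+g)$ via the Taylor formula \eqref{systematic_linearizations} inside $\vec F$ cancels the linear terms and leaves precisely the remainder expressions you wrote down, so the map and the overall Banach fixed-point strategy coincide with the paper's. The contraction estimate and the bias bound (evaluating at the fixed point versus the paper's route via $\vec F(0,0)-\vec F(\zeta_\lambda,g_\lambda)$) are also equivalent.

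The genuine gap is in your self-mapping step. You bound each component separately and conclude $\|T_\theta(a,g)\|_{\R^p}\le d_\theta(\lambda)+2[r_h(\lambda)d_h(\lambda)+r_\theta(\lambda)d_\theta(\lambda)]$ and likewise for $T_h$, and then assert that $r_\theta,r_h\to 0$ forces these to fall below $2d_\theta(\lambda)$ and $2d_h(\lambda)$ respectively. That amounts to requiring $2[r_h d_h+r_\theta d_\theta]\le d_\theta$ and $\le d_h$, i.e.\ $2r_h(d_h/d_\theta)+2r_\theta\le 1$ for the $\theta$-component. This does not follow from $r_\theta,r_h\to 0$ alone: if $d_h(\lambda)/d_\theta(\lambda)\to\infty$ as $\lambda\to 0$, the term $r_h d_h$ can dominate $d_\theta$ no matter how small $r_h$ is (for instance $d_\theta\sim\lambda$, $d_h\sim\lambda^{1/2}$, $r_h,r_\theta\sim\lambda^{1/2}$ already breaks your inequality while satisfying every hypothesis). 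The paper avoids this by working with the sum norm $\|(\zeta,g)\|_{\R^p\times\CH}=\|\zeta\|_{\R^p}+\|g\|_{\CH}$: adding the two remainder bounds gives $\|\vec F(\zeta,g)\|_{\R^p\times\CH}\le (r_h+\tfrac12)t_{h\lambda}+(r_\theta+\tfrac12)t_{\theta\lambda}<t_{h\lambda}+t_{\theta\lambda}$, which only needs $r_\theta,r_h<1/2$ and involves no ratio of $d_h$ to $d_\theta$. The definitions of $r_\theta(\lambda)$ and $r_h(\lambda)$ are tailored to this sum-norm cancellation; a componentwise argument needs a stronger, unstated hypothesis controlling $d_h/d_\theta$. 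You should reformulate the invariance step in the sum norm (or equivalently with the weights $t_{\theta\lambda},t_{h\lambda}$ built into the norm) to match the stated hypotheses.
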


\begin{proof}
Let $t_{\theta\lambda} = 2d_{\theta}(\lambda)$, $t_{h\lambda} = 2d_h(\lambda)$.
Define
\begin{align*}
F_{\theta}(\zeta,g) &= \zeta - G_{\theta\theta}^{-1}(\theta_0,h_0) \left[Z_{\theta}(\theta_0+\zeta,h_0+g)\right.\\
&\qquad\qquad \left. - U_{h\theta}(\theta_0,h_0) G_h^{-1}(\theta_0,h_0) Z_h(\theta_0+\zeta,h_0+g)\right],\\
F_h(\zeta,g) &= g - G_{hh}^{-1}(\theta_0,h_0) \left[Z_h(\theta_0+\zeta,h_0+g)\right.\\
&\qquad\qquad \left. - U_{\theta h}(\theta_0,h_0) U_{\theta}^{-1}(\theta_0,h_0) Z_{\theta}(\theta_0+\zeta,h_0+g)\right].
\end{align*}

Let $\vec{F}(\zeta,g) = (F_{\theta}(\zeta,g), F_h(\zeta,g))$ be a function on $\Q = \R^p \times \CH$, and for any subset $\Q_1 \subset \Q$, denote by $\vec{F}(\Q_1)$ the image of $\Q_1$ under $\vec{F}$. The proof has three steps:
\begin{enumerate}
\item
$\vec{F}(S_{\theta}(t_{\theta\lambda}) \times S_h(t_{h\lambda})) \subset S_{\theta}(t_{\theta\lambda}) \times S_h(t_{h\lambda})$.
\item
$\vec{F}$ is a contraction map on $S_{\theta}(t_{\theta\lambda}) \times S_h(t_{h\lambda})$.
\item
Obtaining the estimate for the bias approximation, $\left\|\bar{\theta}_{\lambda} - \theta_{\lambda}\right\|_{\R^p} + \left\|\bar{h}_{\lambda} - h_{\lambda}\right\|_{\CH}$.
\end{enumerate}

For~(a), by our assumption, we can choose $\lambda_0$ small enough that $S_{\theta,\theta_0}(t_{\theta\lambda}) \subset \N_{\theta_0}$, $S_{h,h_0}(t_{h\lambda}) \subset \N_{h_0}$, and $r_{\theta}(\lambda) < 1/2$ for all $\lambda \in (0,\lambda_0]$. For every $(\theta,h) \in \Q$, we denote 
\[\norm{(\theta,h)}_{{\R^p}\times \CH} = \norm{\theta}_{\R^p} + \norm{h}_{\CH}.\] 
For $(\zeta,g) \in S_{\theta}(t_{\theta\lambda}) \times S_h(t_{h\lambda})$, we have
\[\norm{\vec{F}(\zeta,g)}_{\R^p \times \CH} = \norm{F_{\theta}(\zeta,g)}_{\R^p} + \norm{F_h(\zeta,g)}_{\CH}.\]
For $\norm{F_{\theta}(\zeta,g)}_{\R^p}$, by triangle inequality, we have
\begin{align*}
\norm{F_{\theta}(\zeta,g)}_{\R^p} &\le \left\|\zeta - G_{\theta\theta}^{-1}(\theta_0,h_0) \left[Z_{\theta}(\theta_0+\zeta,h_0+g)\right.\right.\\
&\qquad \left.\left. - U_{h\theta}(\theta_0,h_0) G_h^{-1}(\theta_0,h_0) Z_h(\theta_0+\zeta,h_0+g)\right] - (\bar{\theta}_{\lambda} - \theta_0)\right\|_{\R^p}\\
&+ \norm{\bar{\theta}_{\lambda} - \theta_0}_{\R^p}.
\end{align*}
By the definition of $\bar{\theta}_{\lambda} - \theta_0$ and $G_{\theta\theta}(\theta,h)$, Taylor series expansion of $Z_{\theta}(\theta_0+\zeta, h_0+g)$ and $Z_h(\theta_0+\zeta, h_0+g)$, and the remainder bound \eqref{bound_remainder_Ztheta}, we can get
\begin{align*}
&\left\|\zeta - G_{\theta\theta}^{-1}(\theta_0,h_0)\left[Z_{\theta}(\theta_0+\zeta,h_0+g) - U_{h\theta}(\theta_0,h_0)G_h^{-1}(\theta_0,h_0)Z_h(\theta_0+\zeta,h_0+g)\right] - (\bar{\theta}_{\lambda} - \theta_0)\right\|_{\R^p}\\
&\qquad = \left\|\zeta - G_{\theta\theta}^{-1}(\theta_0,h_0)\left[Z_{\theta}(\theta_0+\zeta,h_0+g) - U_{h\theta}(\theta_0,h_0)G_h^{-1}(\theta_0,h_0)Z_h(\theta_0+\zeta,h_0+g)\right]\right.\\
&\qquad\qquad \left. + G_{\theta\theta}^{-1}(\theta_0,h_0)\left[Z_{\theta}(\theta_0,h_0) - U_{h\theta}(\theta_0,h_0)G_h^{-1}(\theta_0,h_0)Z_h(\theta_0,h_0)\right]\right\|_{\R^p}\\
&\qquad = \left\|\zeta - G_{\theta\theta}^{-1}(\theta_0,h_0)\left\{\left[Z_{\theta}(\theta_0+\zeta,h_0+g) - Z_{\theta}(\theta_0,h_0)\right]\right.\right.\\
&\qquad\qquad \left.\left.- U_{h\theta}(\theta_0,h_0)G_h^{-1}(\theta_0,h_0) \left[Z_h(\theta_0+\zeta,h_0+g) - Z_h(\theta_0,h_0)\right]\right\}\right\|_{\R^p}\\
&\qquad = \left\|\zeta - G_{\theta\theta}^{-1}(\theta_0,h_0)\left\{\left[U_{\theta}(\theta_0,h_0)\zeta + R_{\theta}(\theta_0,h_0)\zeta g\right] \right.\right.\\
&\qquad \qquad \left. \left. - U_{h\theta}(\theta_0,h_0)G_h^{-1}(\theta_0,h_0)\left[ U_{\theta h}(\theta_0,h_0)\zeta + R_h(\theta_0,h_0)\zeta g\right]\right\}\right\|_{\R^p}\\
&\qquad = \left\|\zeta - G_{\theta\theta}^{-1}(\theta_0,h_0)\left\{\left[U_{\theta}(\theta_0,h_0) - U_{h\theta}(\theta_0,h_0)G_h^{-1}(\theta_0,h_0)U_{\theta h}(\theta_0,h_0)\right]\zeta \right. \right.\\
&\qquad \qquad \left. \left. + \left[R_{\theta}(\theta_0,h_0) - U_{h\theta}(\theta_0,h_0)G_h^{-1}(\theta_0,h_0)R_h(\theta_0,h_0)\right]\zeta g\right\}\right\|_{\R^p}\\
&\qquad = \left\|G_{\theta\theta}^{-1}(\theta_0,h_0) \left[R_{\theta}(\theta_0,h_0)\zeta g - U_{h\theta}(\theta_0,h_0) G_h^{-1}(\theta_0,h_0) R_h(\theta_0,h_0) \zeta g\right]\right\|_{\R^p}\\
&\qquad \le \frac{1}{2} \left(K^1_{\theta}\norm{g}_{\CH} + K_{\theta}^2\norm{\zeta}_{\R^p}\right) \norm{g}_{\CH} + \frac{1}{2} \left(K_{\theta}^3\norm{g}_{\CH} + K_{\theta}^4\norm{\zeta}_{\R^p}\right)\norm{\zeta}_{\R^p}.
\end{align*}

Similarly, by the definition of $\bar{h}_{\lambda} - h_0$ and $G_{hh}(\theta,h)$, Taylor series expansion of $Z_{\theta}(\theta_0+\zeta, h_0+g)$ and $Z_h(\theta_0+\zeta, h_0+g)$, and the remainder bound \eqref{bound_remainder_Zh}, we also have
\begin{align*}
\norm{F_h(\zeta,g)}_{\CH} &\le \left\|g - G_{hh}^{-1}(\theta_0,h_0) \left[Z_h(\theta_0+\zeta,h_0+g)\right.\right.\\
&\qquad \left.\left. - U_{\theta h}(\theta_0,h_0)U_{\theta}^{-1}(\theta_0,h_0)Z_{\theta}(\theta_0+\zeta,h_0+g)\right] - (\bar{h}_{\lambda} - h_0)\right\|_{\CH}\\
&\qquad + \norm{\bar{h}_{\lambda} - h_0}_{\CH},
\end{align*}
and
\begin{align*}
&\left\|g - G_{hh}^{-1}(\theta_0,h_0) \left[Z_h(\theta_0+\zeta,h_0+g) - U_{\theta h}(\theta_0,h_0)U_{\theta}^{-1}(\theta_0,h_0) Z_{\theta}(\theta_0+\zeta,h_0+g)\right] - (\bar{h}_{\lambda} - h_0)\right\|_{\CH}\\
&\quad = \left\|G_{hh}^{-1}(\theta_0,h_0) \left[R_h(\theta_0,h_0)\zeta g - U_{\theta h}(\theta_0,h_0) U_{\theta}^{-1}(\theta_0,h_0) R_{\theta}(\theta_0,h_0) \zeta g\right]\right\|_{\CH}\\
&\quad \le \frac{1}{2}\left(K_h^1\norm{g}_{\CH} + K_h^2\norm{\zeta}_{\R^p}\right)\norm{g}_{\CH} + \frac{1}{2}\left(K_h^3\norm{g}_{\CH} + K_h^4\norm{\zeta}_{\R^p}\right)\norm{\zeta}_{\R^p}.
\end{align*}
Since $t_{\theta\lambda} =  2\norm{\bar{\theta}_{\lambda} - \theta_0}_{\R^p}$, $t_{h\lambda} = 2\norm{\bar{h}_{\lambda} - h_0}_{\CH}$, $r_h(\lambda)< 1/2$, and $r_{\theta}(\lambda)<1/2$, for $(\zeta,g) \in {S_{\theta}(t_{\theta\lambda}) \times S_h(t_{h\lambda})}$, we have
\begin{align*}
\norm{\vec{F}(\zeta,g)}_{\R^p \times \CH} &\le \frac{1}{2}\left[\left(K_h^1+K_{\theta}^1\right)\norm{g}_{\CH} + \left(K_h^2+K_{\theta}^2\right)\norm{\zeta}_{\R^p}\right] \norm{g}_{\CH}\\
&\qquad + \frac{1}{2}\left[\left(K_h^3+K_{\theta}^3\right)\norm{g}_{\CH} + \left(K_h^4+K_{\theta}^4\right)\norm{\zeta}_{\R^p}\right] \norm{\zeta}_{\R^p}\\
&\qquad + \norm{\bar{\theta}_{\lambda} - \theta_0}_{\R^p} + \norm{\bar{h}_{\lambda} - h_0}_{\CH}\\
& \le \frac{1}{2}\left[\left(K_h^1+K_{\theta}^1\right)t_{h\lambda} + \left(K_h^2+K_{\theta}^2\right)t_{\theta\lambda}\right]t_{h\lambda}\\
&\qquad + \frac{1}{2}\left[\left(K_h^3+K_{\theta}^3\right)t_{h\lambda} + \left(K_h^4+K_{\theta}^4\right)t_{\theta\lambda}\right]t_{\theta\lambda}\\
&\qquad + \frac{1}{2} t_{\theta\lambda} + \frac{1}{2} t_{h\lambda}\\
&= r_h(\lambda)t_{h\lambda} + r_{\theta}(\lambda)t_{\theta\lambda} + \frac{1}{2}t_{\theta\lambda} + \frac{1}{2}t_{h\lambda}\\
&= \left(r_h(\lambda) + \frac{1}{2}\right)t_{h\lambda} + \left(r_{\theta}(\lambda) + \frac{1}{2}\right)t_{\theta\lambda} \\
&< t_{h\alpha} + t_{\theta\alpha}.
\end{align*}

Now for step (b):  For $\zeta_1, \zeta_2 \in S_{\theta}(t_{\theta\lambda})$, $g_1,g_2 \in S_h(t_{h\lambda})$, by Taylor expansion, we get
\begin{align*}
Z_{\theta}(\theta_0+\zeta_2 &, h_0+g_2) = Z_{\theta}(\theta_0+\zeta_1, h_0+g_1)\\
&+ \int_0^1 D_{\theta} Z_{\theta}\left[\theta_0+\zeta_1+t(\zeta_2 - \zeta_1), h_0+g_1+t(g_2-g_1)\right](\zeta_2-\zeta_1)\\
&+ D_hZ_{\theta}\left[\theta_0+\zeta_1+t(\zeta_2 - \zeta_1), h_0+g_1+t(g_2-g_1)\right](g_2-g_1)\ dt.
\end{align*}
Applying Taylor expansion again to the terms inside the integral and letting $\zeta^{*} = \zeta_1 + t(\zeta_2-\zeta_1)$, ${g^{*} = g_1+t(g_2-g_1)}$, we have
\begin{align*}
Z_{\theta}(\theta_0&+\zeta_2,h_0+g_2) - Z_{\theta}(\theta_0+\zeta_1,h_0+g_1)\\
&= U_{\theta}(\theta_0,h_0)(\zeta_2-\zeta_1) + U_{h\theta}(\theta_0,h_0)(g_2-g_1)\\
&\qquad + \int_0^1\int_0^1 \left[D_{\theta \theta}^2Z_{\theta}(\theta_0+t'\zeta^{*},h_0+t'g^{*})\zeta^{*}\right.\\
&\qquad\qquad\qquad\qquad \left.+ D^2_{h \theta}Z_{\theta}(\theta_0+t'\zeta^{*},h_0+t'g^{*})g^{*}\right](\zeta_2-\zeta_1)\ dt'dt\\
&\qquad + \int_0^1\int_0^1 \left[D_{\theta h}^2 Z_{\theta}(\theta_0+t'\zeta^{*},h_0+t'g^{*})\zeta^{*}\right.\\
&\qquad\qquad\qquad\qquad \left.+ D_{hh}^2 Z_{\theta}(\theta_0+t'\zeta^{*},h_0+t'g^{*})g^{*}\right](g_2-g_1)\ dt'dt.
\end{align*}

Since
\begin{align*}
&F_{\theta}(\zeta_1,g_1) - F_{\theta}(\zeta_2,g_2)\\
&= (\zeta_1-\zeta_2) - G_{\theta\theta}^{-1}(\theta_0,h_0) \left\{\left[Z_{\theta}(\theta_0+\zeta_1,h_0+g_1) - Z_{\theta}(\theta_0+\zeta_2,h_0+g_2)\right]\right.\\
&\qquad\qquad \left.- U_{h\theta}(\theta_0,h_0)G_h^{-1}(\theta_0,h_0) \left[Z_h(\theta_0+\zeta_1,h_0+g_1) - Z_h(\theta_0+\zeta_2,h_0+g_2)\right]\right\},
\end{align*}
and for $0 \le t \le 1$, $\zeta^{*} = \zeta_1 + t(\zeta_2-\zeta_1) \in S_{\theta}(t_{\theta\lambda})$, $g^{*} = g_1+t(g_2-g_1) \in S_h(t_{h\lambda})$ by convexity of $S_{\theta}(t_{\theta\lambda})$ and  $S_h(t_{h\lambda})$, similar algebraic manipulations as in the proof of (a) show that 
\begin{align*}
\norm{F_{\theta}(\zeta_1,g_1) - F_{\theta}(\zeta_2,g_2)}_{\R^p} &\le \left(K_{\theta}^3\norm{g\str}_{\CH} + K_{\theta}^4\norm{\zeta\str}_{\R^p}\right)\norm{\zeta_2-\zeta_1}_{\R^p}\\
&\quad + \left(K_{\theta}^2\norm{\zeta\str}_{\R^p} + K_{\theta}^1\norm{g\str}_{\CH}\right)\norm{g_2-g_1}_{\CH}.
\end{align*}
Similarly for $F_h$, we get 
\begin{align*}
\norm{F_h(\zeta_1,g_1) - F_h(\zeta_2,g_2)}_{\CH} &\le \left(K_h^3\norm{g\str}_{\CH} + K_h^4\norm{\zeta\str}_{\R^p}\right)\norm{\zeta_2-\zeta_1}_{\R^p}\\
&\quad + \left(K_h^2\norm{\zeta\str}_{\R^p} + K_h^1\norm{g\str}_{\CH}\right)\norm{g_2-g_1}_{\CH}.
\end{align*}
Therefore,
\begin{align*}
&\norm{\vec{F}(\zeta_1,g_1) - \vec{F}(\zeta_2,g_2)}_{\R^s \times \CH}\\
&\qquad \le (K_h^1+K_{\theta}^1) \norm{g\str}_{\CH} \norm{g_2-g_1}_{\CH} + (K_h^2+K_{\theta}^2)\norm{\zeta\str}_{\R^p} \norm{g_2-g_1}_{\CH}\\
&\qquad \quad + (K_h^3+K_{\theta}^3) \norm{g\str}_{\CH} \norm{\zeta_2-\zeta_1}_{\R^p} + (K_h^4+K_{\theta}^4)\norm{\zeta\str}_{\R^p} \norm{\zeta_2-\zeta_1}_{\R^p}\\
&\qquad = 2\left[(K_h^1+K_{\theta}^1) d_h(\lambda) + (K_h^2 + K_{\theta}^2) d_{\theta}(\lambda)\right] \norm{g_2-g_1}_{\CH}\\
&\qquad\quad + 2\left[(K_h^3+K_{\theta}^3) d_h(\lambda) + (K_h^4 + K_{\theta}^4) d_{\theta}(\lambda)\right] \norm{\zeta_2-\zeta_1}_{\R^p}\\
&\qquad = 2r_h(\lambda)\norm{g_2-g_1}_{\CH} + 2r_{\theta}(\lambda)\norm{\zeta_2-\zeta_1}_{\R^p}\\
&\quad \le C_1\norm{g_2-g_1}_{\CH} + C_2\norm{\zeta_2-\zeta_1}_{\R^p},
\end{align*}
where $0<C_1 < 1$, $0<C_2 < 1$, so $\vec{F} = (F_{\theta},F_h)$ is a contraction map on $S_{\theta}(t_{\theta\lambda}) \times S_h(t_{h\lambda})$. By the contraction mapping theorem (see Theorem 9.23 in \citeasnoun{rudin1976principles}), there exists a unique ${(\zeta_{\lambda},g_{\lambda}) \in S_{\theta}(t_{\theta\lambda}) \times S_h(t_{h\lambda})}$ such that
\[\vec{F}(\zeta_{\lambda},g_{\lambda}) = (\zeta_{\lambda},g_{\lambda}).\]

Let $\theta_{\lambda} = \theta_0+\zeta_{\lambda}$, $h_{\lambda} = h_0+g_{\lambda}$. Then $\theta_{\lambda} \in S_{\theta,\theta_0}(t_{\theta\lambda})$, $h_{\lambda} \in S_{h,h_0}(t_{\theta\lambda})$, and $(\theta_{\lambda},h_{\lambda})$ are the unique solutions to $Z_{\theta}(\theta_{\lambda},h_{\lambda}) = 0$, $Z_h(\theta_{\lambda},h_{\lambda}) = 0$. 

For part (c), note that
\begin{align*}
(\bar{\theta}_{\lambda} - \theta_{\lambda}, \bar{h}_{\lambda} - h_{\lambda}) &= (\bar{\theta}_{\lambda}-\theta_0,\bar{h}_{\lambda}-h_0) - (\theta_{\lambda}-\theta_0,h_{\lambda}-h_0)\\
&= \vec{F}(0,0) - \vec{F}(\zeta_{\lambda},g_{\lambda}).
\end{align*}
Thus,
\begin{align*}
&\norm{\bar{\theta}_{\lambda}-\theta_{\lambda}}_{\R^p} + \norm{\bar{h}_{\lambda}-h_{\lambda}}_{\CH}\\
&\qquad\qquad = \norm{\vec{F}(\zeta_{\lambda},g_{\lambda}) - \vec{F}(0,0)}_{\R^p \times \CH}\\
&\qquad\qquad \le 2r_h(\lambda)\norm{g_{\lambda}}_{\CH} + 2r_{\theta}(\lambda)\norm{\zeta_{\lambda}}_{\R^p}\\
&\qquad\qquad \le 4\left[r_h(\lambda) d_h(\lambda) + r_{\theta}(\lambda) d_{\theta}(\lambda)\right].
\end{align*}

This completes the proof of Theorem A.\ref{existence of systematic estimator}.
\end{proof}
Next, we consider the existence of $(\hat{\theta},\hat{h}) \in \N_{\theta_0} \times \N_{h_0}$.
Define
\begin{align*}
d_{n\theta}(\lambda) &= \norm{\bar{\theta}_{n\lambda} - \theta_{\lambda}}_{\R^p},\\
d_{nh}(\lambda) &= \norm{\bar{h}_{n\lambda} - h_{\lambda}}_{\CH},\\
r_{n\theta}(\lambda) &= E_{n\theta}^2 + E_{nh}^2 + (K_{n\theta}^3+K_{nh}^3)d_{nh}(\lambda) + (K_{n\theta}^4+K_{nh}^4) d_{n\theta}(\lambda),\\
r_{nh}(\lambda) &= E_{n\theta}^1 + E_{nh}^1 + (K_{n\theta}^1+K_{nh}^1)d_{nh}(\lambda) + (K_{n\theta}^2+K_{nh}^2) d_{n\theta}(\lambda).
\end{align*}
We get the following existence theorem for $(\hat{\theta},\hat{h}) \in \N_{\theta_0} \times \N_{h_0}$. 
\begin{atheorem}(Existence of $(\hat{\theta},\hat{h})$ and the variability approximation)
Suppose $\lambda_n$ is a sequence such that for all $n$ sufficiently large, $\theta_{\lambda_n} \in \N_{\theta_0}$, $h_{\lambda_n} \in \N_{h_0}$, and
\begin{align*}
d_{n\theta}(\lambda_n) &\overset{P}{\rightarrow} 0, &d_{nh}(\lambda_n) &\overset{P}{\rightarrow} 0,\\
r_{n\theta}(\lambda_n) &\overset{P}{\rightarrow} 0, &r_{nh}(\lambda_n) &\overset{P}{\rightarrow} 0.
\end{align*}
Then, with probability tending to unity as $n \to \infty$, there is a unique root $(\hat{\theta}, \hat{h})$ of $Z_{n\theta}(\hat{\theta},\hat{h}) = 0$, $Z_{nh}(\hat{\theta},\hat{h}) = 0$ in ${S_{\theta,\theta_{\lambda_n}}(2d_{n\theta}(\lambda_n)) \times S_{h,h_{\lambda_n}}(2d_{nh}(\lambda_n))} \subset \N_{\theta_0} \times \N_{h_0}$. In addition, as $n \to \infty$, $\lambda_n \to 0$, 
\[\norm{\hat{\theta} - \bar{\theta}_{n\lambda_n}}_{\R^p} + \norm{\hat{h} - \bar{h}_{n \lambda_n}}_{\CH} \le 4r_{n\theta}(\lambda_n)d_{n\theta}(\lambda_n) + 4r_{nh}(\lambda_n)d_{nh}(\lambda_n).\]
\end{atheorem}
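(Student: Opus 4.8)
The plan is to reproduce, almost verbatim, the contraction--mapping argument of Theorem A.\ref{existence of systematic estimator}, but with the estimating equations $Z_{n\theta},Z_{nh}$ expanded around the deterministic solution $(\theta_{\lambda_n},h_{\lambda_n})$ (which lies in $\N_{\theta_0}\times\N_{h_0}$ for all large $n$ by hypothesis) rather than around $(\theta_0,h_0)$, and using the stochastic linearizations \eqref{stochastic_linearization} in place of the systematic ones \eqref{systematic_linearizations}. Writing the operators $G_{hh},G_{\theta\theta},U_{\theta h},U_{h\theta},G_h,U_{\theta}$ for their values at $(\theta_{\lambda_n},h_{\lambda_n})$, I would set, for $(\zeta,g)\in\Q=\R^p\times\CH$,
\begin{align*}
F_{n\theta}(\zeta,g) &= \zeta - G_{\theta\theta}^{-1}\left[Z_{n\theta}(\theta_{\lambda_n}+\zeta,h_{\lambda_n}+g) - U_{h\theta}G_h^{-1}Z_{nh}(\theta_{\lambda_n}+\zeta,h_{\lambda_n}+g)\right],\\
F_{nh}(\zeta,g) &= g - G_{hh}^{-1}\left[Z_{nh}(\theta_{\lambda_n}+\zeta,h_{\lambda_n}+g) - U_{\theta h}U_{\theta}^{-1}Z_{n\theta}(\theta_{\lambda_n}+\zeta,h_{\lambda_n}+g)\right],
\end{align*}
and $\vec{F}_n=(F_{n\theta},F_{nh})$, with the norm $\norm{(\zeta,g)}_{\R^p\times\CH}=\norm{\zeta}_{\R^p}+\norm{g}_{\CH}$. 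Since $G_{hh},G_{\theta\theta},G_h,U_{\theta}$ are boundedly invertible on $\N_{\theta_0}\times\N_{h_0}$ (Lax--Milgram, as in Section \ref{linear_expansion}), a fixed point $(\zeta_n,g_n)$ of $\vec{F}_n$ corresponds exactly to a root $(\hat\theta,\hat h)=(\theta_{\lambda_n}+\zeta_n,h_{\lambda_n}+g_n)$ of $Z_{n\theta}=Z_{nh}=0$.

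I would then work on the event $E_n=\{r_{n\theta}(\lambda_n)<\tfrac12,\ r_{nh}(\lambda_n)<\tfrac12,\ S_{\theta,\theta_{\lambda_n}}(t_{n\theta})\subset\N_{\theta_0},\ S_{h,h_{\lambda_n}}(t_{nh})\subset\N_{h_0}\}$, where $t_{n\theta}=2d_{n\theta}(\lambda_n)$, $t_{nh}=2d_{nh}(\lambda_n)$; by the assumed convergences $d_{n\theta},d_{nh},r_{n\theta},r_{nh}\overset{P}{\rightarrow}0$ and $\theta_{\lambda_n}\in\N_{\theta_0}$, $h_{\lambda_n}\in\N_{h_0}$, the event $E_n$ has probability tending to one. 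On $E_n$ the same three steps as in the previous proof go through. \textbf{(a)} Using \eqref{stochastic_linearization}, the definitions of $\bar\theta_{n\lambda_n}-\theta_{\lambda_n}$ and $\bar h_{n\lambda_n}-h_{\lambda_n}$, and the remainder bounds \eqref{bound_remainder_Znh}--\eqref{bound_remainder_Zntheta}, one checks that for $(\zeta,g)\in S_{\theta}(t_{n\theta})\times S_h(t_{nh})$, $\norm{\vec F_n(\zeta,g)}_{\R^p\times\CH}\le\bigl(r_{nh}(\lambda_n)+\tfrac12\bigr)t_{nh}+\bigl(r_{n\theta}(\lambda_n)+\tfrac12\bigr)t_{n\theta}<t_{nh}+t_{n\theta}$, so $\vec F_n$ maps the ball into itself. \textbf{(b)} A second-order Taylor expansion of $Z_{n\theta},Z_{nh}$ at two points of the ball, exactly as in step (b) of the previous proof, gives $\norm{\vec F_n(\zeta_1,g_1)-\vec F_n(\zeta_2,g_2)}_{\R^p\times\CH}\le 2r_{nh}(\lambda_n)\norm{g_1-g_2}_{\CH}+2r_{n\theta}(\lambda_n)\norm{\zeta_1-\zeta_2}_{\R^p}$, which on $E_n$ makes $\vec F_n$ a contraction; the contraction mapping theorem (Theorem 9.23 in \citeasnoun{rudin1976principles}) then supplies the unique fixed point $(\zeta_n,g_n)$ in the ball, hence the unique root $(\hat\theta,\hat h)$ in $S_{\theta,\theta_{\lambda_n}}(2d_{n\theta}(\lambda_n))\times S_{h,h_{\lambda_n}}(2d_{nh}(\lambda_n))\subset\N_{\theta_0}\times\N_{h_0}$. \textbf{(c)} Since $(\hat\theta-\bar\theta_{n\lambda_n},\hat h-\bar h_{n\lambda_n})=(\zeta_n,g_n)-(\bar\theta_{n\lambda_n}-\theta_{\lambda_n},\bar h_{n\lambda_n}-h_{\lambda_n})=\vec F_n(\zeta_n,g_n)-\vec F_n(0,0)$, the Lipschitz estimate of (b) together with $\norm{\zeta_n}_{\R^p}\le 2d_{n\theta}(\lambda_n)$ and $\norm{g_n}_{\CH}\le 2d_{nh}(\lambda_n)$ yields $\norm{\hat\theta-\bar\theta_{n\lambda_n}}_{\R^p}+\norm{\hat h-\bar h_{n\lambda_n}}_{\CH}\le 4r_{n\theta}(\lambda_n)d_{n\theta}(\lambda_n)+4r_{nh}(\lambda_n)d_{nh}(\lambda_n)$.

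The one substantive difference from the deterministic argument, and the point requiring the most care, is that the stochastic linearizations \eqref{stochastic_linearization} carry the extra first-order error operators $e_{\theta}(\theta_{\lambda_n},h_{\lambda_n})$ and $e_h(\theta_{\lambda_n},h_{\lambda_n})$, which have no counterpart in \eqref{systematic_linearizations}; these must be carried alongside the quadratic remainders in the bounds on $\vec F_n$, and this is exactly why they were absorbed into $E_{nh}^1,E_{nh}^2,E_{n\theta}^1,E_{n\theta}^2$ and hence into $r_{nh}(\lambda_n),r_{n\theta}(\lambda_n)$ in \eqref{bound_remainder_Znh}--\eqref{bound_remainder_Zntheta}. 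Once one accepts those remainder bounds — whose verification is the ``standard analysis'' flagged before those displays, and which also implicitly uses that the random constants $K^i_{n\theta},K^i_{nh}$ are $O_p(1)$ uniformly on the shrinking neighborhoods via the smoothness Assumptions \ref{A4}--\ref{A7} and a law of large numbers — the self-mapping and contraction properties hold on the event $E_n$ of probability tending to one, and the remainder of the proof is a verbatim repetition of Theorem A.\ref{existence of systematic estimator}. (Checking that the hypothesised rates $d_{n\theta},d_{nh},r_{n\theta},r_{nh}$ genuinely tend to zero is the real analytic work, but that is a separate matter from this theorem, whose statement takes those convergences as given.)
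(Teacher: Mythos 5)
Your proposal reproduces the paper's argument essentially verbatim: the same contraction map $\vec{F}_n$ centered at $(\theta_{\lambda_n},h_{\lambda_n})$, the same three steps (self-mapping via the remainder bounds \eqref{bound_remainder_Znh}--\eqref{bound_remainder_Zntheta}, contraction, and Lipschitz estimate for the variability approximation), and the same invocation of the contraction mapping theorem. The only cosmetic addition is your explicit definition of the event $E_n$, which is implicit in the paper's phrasing "take $n$ large enough so that\ldots with probability tending to unity."
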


\begin{proof}
For convenience, we drop the subscript on $\lambda_n$ and let $t_{n\theta\lambda} = 2d_{n\theta}(\lambda)$, $t_{nh\lambda} = 2d_{nh}(\lambda)$. Let
\begin{align*}
F_{n\theta}(\zeta,g) &= \zeta - G_{\theta\theta}^{-1}(\theta_{\lambda},h_{\lambda})\left[Z_{n\theta}(\theta_{\lambda}+\zeta,h_{\lambda}+g)\right.\\
&\qquad\qquad \left.- U_{h\theta}(\theta_{\lambda},h_{\lambda}) G_h^{-1}(\theta_{\lambda},h_{\lambda}) Z_{nh}(\theta_{\lambda}+\zeta,h_{\lambda}+g)\right]\\
F_{nh}(\zeta,g) &= g - G_{hh}^{-1}(\theta_{\lambda},h_{\lambda})\left[Z_{nh}(\theta_{\lambda}+\zeta,h_{\lambda}+g)\right.\\
&\qquad\qquad \left.- U_{\theta h}(\theta_{\lambda},h_{\lambda}) U_{\theta}^{-1}(\theta_{\lambda},h_{\lambda}) Z_{n\theta}(\theta_{\lambda}+\zeta,h_{\lambda}+g)\right].
\end{align*}
The proof proceeds in three steps, similar to the proof of Theorem \ref{existence of systematic estimator}, with additional terms introduced in approximating $D_{\theta}Z_{n\theta}$ and $D_hZ_{nh}$ by $D_{\theta}Z_{\theta}$ and $D_hZ_{h}$, respectively. Take $n$ large enough so that $S_{\theta,\theta_{\lambda}}(t_{n\theta\lambda}) \subset \N_{\theta_0}$, $S_{h,h_{\lambda}}(t_{nh\lambda}) \subset \N_{h_0}$ and $r_{n\theta}(\lambda) < \frac{1}{2}$, $r_{nh}(\lambda) < \frac{1}{2}$. 

First, we show that $\vec{F}_n(\zeta,g) = (F_{n\theta}(\zeta,g), F_{nh}(\zeta,g))$ maps $S_{\theta}(t_{n\theta\lambda}) \times S_h(t_{nh\lambda})$ to itself, i.e.,
\[\vec{F}_n\left(S_{\theta}(t_{n\theta\lambda}) \times S_h(t_{nh\lambda})\right) \subset S_{\theta}(t_{n\theta\lambda}) \times S_h(t_{nh\lambda}).\]
By definition, for $(\zeta,g) \in S_{\theta}(t_{n\theta\lambda}) \times S_h(t_{nh\lambda})$, we have
\[\norm{\vec{F}_n(\zeta,g)}_{\R^p \times \CH} = \norm{F_{n\theta}(\zeta,g)}_{\R^p} + \norm{F_{nh}(\zeta,g)}_{\CH}.\]
For $F_{n\theta}$, by the triangle inequality, we get
\begin{align*}
\norm{F_{n\theta}(\zeta,g)}_{\R^p} &\le \left\|\zeta - G_{\theta\theta}^{-1}(\theta_{\lambda},h_{\lambda})\left[Z_{n\theta}(\theta_{\lambda}+\zeta,h_{\lambda}+g)\right.\right.\\
&\qquad \left.- U_{h\theta}(\theta_{\lambda},h_{\lambda})G_h^{-1}(\theta_{\lambda},h_{\lambda}) Z_{nh}(\theta_{\lambda}+\zeta,h_{\lambda}+g)\right]\\
&\qquad \left.- (\bar{\theta}_{n\lambda} - \theta_{\lambda})\right\|_{\R^p} + \norm{\bar{\theta}_{n\lambda} - \theta_{\lambda}}_{\R^p}
\end{align*}

Using the definition of $\bar{\theta}_{n\lambda} - \theta_{\lambda}$, $G_{\theta \theta}(\theta,h)$, Taylor expansion of $Z_{n\theta}(\theta_{\lambda} + \zeta, h_{\lambda} + g)$ and $Z_{nh}(\theta_{\lambda} + \zeta, h_{\lambda} + g)$, and the remainder bound \eqref{bound_remainder_Zntheta}, we get that
\begin{align*}
&\left\|\zeta - G_{\theta\theta}^{-1}((\theta_{\lambda},h_{\lambda})\left[Z_{n\theta}(\theta_{\lambda}+\zeta,h_{\lambda}+g) - U_{h\theta}(\theta_{\lambda},h_{\lambda})G_h^{-1}(\theta_{\lambda},h_{\lambda})Z_{nh}(\theta_{\lambda}+\zeta,h_{\lambda}+g)\right] - (\bar{\theta}_{n\lambda}-\theta_{\lambda})\right\|_{\R^p}\\
&\qquad = \left\|\zeta - G_{\theta\theta}^{-1}(\theta_{\lambda},h_{\lambda})\left\{\left[Z_{n\theta}(\theta_{\lambda}+\zeta,h_{\lambda}+g) - Z_{n\theta}(\theta_{\lambda},h_{\lambda})\right]\right.\right.\\
&\qquad\qquad \left.\left.- U_{h\theta}(\theta_{\lambda},h_{\lambda})G_h^{-1}(\theta_{\lambda},h_{\lambda})\left[Z_{nh}(\theta_{\lambda}+\zeta,h_{\lambda}+g) - Z_{nh}(\theta_{\lambda},h_{\lambda})\right]\right\}\right\|_{\R^p}\\
&\qquad = \left\|\zeta - G_{\theta\theta}^{-1}(\theta_{\lambda},h_{\lambda})\left\{\left[U_{\theta}(\theta_{\lambda},h_{\lambda})\zeta + U_{h\theta}(\theta_{\lambda},h_{\lambda})g + (e_{\theta} + R_{n\theta})(\theta_{\lambda},h_{\lambda})\zeta g\right]\right.\right.\\
&\qquad\qquad \left.\left.- U_{h\theta}(\theta_{\lambda},h_{\lambda})G_h^{-1}(\theta_{\lambda},h_{\lambda})\left[G_h(\theta_{\lambda},h_{\lambda})g - U_{\theta h}(\theta_{\lambda},h_{\lambda})\zeta + (e_h + R_{nh})(\theta_{\lambda},h_{\lambda})\zeta g\right]\right\}\right\|_{\R^p}\\
&\qquad = \left\|\zeta - G_{\theta\theta}^{-1}(\theta_{\lambda},h_{\lambda})\left[ U_{\theta}(\theta_{\lambda},h_{\lambda})-(U_{h\theta}(\theta_{\lambda},h_{\lambda})G_h^{-1}(\theta_{\lambda},h_{\lambda})U_{h\theta}(\theta_{\lambda},h_{\lambda}) \right]\zeta\right.\\
&\qquad\qquad \left.- G_{\theta\theta}^{-1}(\theta_{\lambda},h_{\lambda})\left[(e_{\theta} + R_{n\theta})(\theta_{\lambda},h_{\lambda}) - U_{h\theta}(\theta_{\lambda},h_{\lambda})G_h^{-1}(\theta_{\lambda},h_{\lambda})(e_h + R_{nh})(\theta_{\lambda},h_{\lambda})\right]\zeta g\right\|_{\R^p}\\
&\qquad = \left\|G_{\theta\theta}^{-1}(\theta_{\lambda},h_{\lambda})\left\{\left[e_{\theta}(\theta_{\lambda},h_{\lambda}) + R_{n\theta}(\theta_{\lambda},h_{\lambda})\right]\zeta g\right.\right.\\
&\qquad\qquad \left.\left.- U_{h\theta}(\theta_{\lambda},h_{\lambda})G_h^{-1}(\theta_{\lambda},h_{\lambda})\left[e_h(\theta_{\lambda},h_{\lambda}) + R_{nh}(\theta_{\lambda},h_{\lambda})\right]\zeta g\right\}\right\|_{\R^p}\\
&\qquad \le E_{n\theta}^1 \norm{g}_{\CH} + E_{n\theta}^2\norm{\zeta}_{\R^p} + \frac{1}{2}\left(K_{n\theta}^1\norm{g}_{\CH} + K_{n\theta}^2\norm{\zeta}_{\R^p}\right)\norm{g}_{\CH}\\
&\qquad\qquad+ \frac{1}{2}\left(K_{n\theta}^3\norm{g}_{\CH} + K_{n\theta}^4\norm{\zeta}_{\R^p}\right)\norm{\zeta}_{\R^p}.
\end{align*}
Similarly, we have 
\begin{align*}
\norm{F_{nh}(\zeta,g)}_{\CH} \le \Vert g &- G^{-1}_{hh}(\theta_{\lambda},h_{\lambda}) \left[Z_{h}(\theta_{\lambda}+\zeta,h_{\lambda}+g) \right.\\
&- \left. U_{\theta h}(\theta_{\lambda},h_{\lambda})U^{-1}_{\theta}(\theta_{\lambda},h_{\lambda})Z_{\theta}(\theta_{\lambda}+ \zeta,h_{\lambda}+g)\right]\\
&-(\bar{h}_{n\lambda} - h_{\lambda}) \Vert_{\CH} + \norm{\bar{h}_{n\lambda} -h_{\lambda}}_{\CH},
\end{align*}
and
\begin{align*}
&\Vert g - G^{-1}_{hh}(\theta_{\lambda},h_{\lambda}) \left[Z_{h}(\theta_{\lambda}+\zeta,h_{\lambda}+g) \right. \\
& \quad \left. - U_{\theta h}(\theta_{\lambda},h_{\lambda})U^{-1}_{\theta}(\theta_{\lambda},h_{\lambda})Z_{\theta}(\theta_{\lambda}+ \zeta,h_{\lambda}+g)\right] - (\bar{h}_{n\lambda} - h_{\lambda}) \Vert_{\CH}\\ 
& \le E^{1}_{nh} \norm{g}_{\CH} + E^{2}_{nh} \norm{\zeta}_{\R^p} + \frac{1}{2}(K^{1}_{nh}\norm{g}_{\CH} + K^{2}_{nh}\norm{\zeta}_{\R^p})\norm{g}_{\CH} + \frac{1}{2}(K^{3}_{nh}\norm{g}_{\CH} + K^{4}_{nh}\norm{\zeta}_{\R^p})\norm{\zeta}_{\R^p}.
\end{align*}
Thus, for $(\zeta,g)\in S_\theta(t_{n \theta \lambda}) \times S_{h}(t_{n h \lambda})$, 
\begin{align*}
\norm{\vec{F}(\zeta,g)}_{\R^p \times \CH} &\le \left[(E^{1}_{n\theta} + E^{1}_{nh}) + \frac{1}{2}(K^{1}_{n\theta} + K^{1}_{nh})\norm{g}_{\CH} + \frac{1}{2}(K^{2}_{n\theta} + K^{2}_{nh})\norm{\zeta}_{\R^p} \right] \norm{g}_{\CH}\\
& \quad +\left[ (E^{2}_{n\theta} + E^{2}_{nh}) + \frac{1}{2}(K^{3}_{n\theta} + K^{3}_{nh})\norm{g}_{\CH} + \frac{1}{2}(K^{4}_{n\theta} + K^{4}_{nh})\norm{\zeta}_{\R^p} \right] \norm{\zeta}_{\R^p}\\
& \quad + \norm{\bar{\theta}_{n\lambda} - \theta_{\lambda}}_{\R^p} + \norm{\bar{h}_{n\lambda} - h_{\lambda}}_{\CH}\\
&\le r_{nh}(\lambda) t_{nh\lambda} + r_{n\theta}(\lambda)t_{n\theta \lambda} + \frac{1}{2}t_{n\theta \lambda} + \frac{1}{2} t_{nh\lambda}\\
&=\left[r_{nh}(\lambda) + \frac{1}{2}\right]t_{nh\lambda} + \left[ r_{n\theta}(\lambda) + \frac{1}{2}\right] t_{n\theta \lambda}\\
& < t_{nh\lambda} + t_{n\theta \lambda}.
\end{align*}
Therefore, we have shown that $\vec{F}_{n}(S_\theta(t_{n \theta \lambda}) \times S_{h}(t_{n h \lambda})) \subset S_\theta(t_{n \theta \lambda}) \times S_{h}(t_{n h \lambda})$.

Next, we show that $\vec{F}_{n}$ is a contraction map. By similar calculations as in the proof for Theorem 1, after applying Taylor expansion twice, for $\zeta_1,\zeta_2 \in S_\theta(t_{n \theta \lambda})$, $ g_1, g_2 \in S_h(t_{n h \lambda})$, we get
\begin{align*}
\norm{F_{n\theta}(\zeta_1,g_1) - F_{n\theta}(\zeta_2,g_2)}_{\R^p} &\le \left( E^{2}_{n\theta} + K^{3}_{n\theta} t_{nh\lambda} + K^{4}_{n\theta} t_{n\theta \lambda} \right) \norm{\zeta_1 - \zeta_2}_{\R^p}\\
& \quad +\left( E^{1}_{n\theta} + K^{2}_{n\theta} t_{n\theta \lambda} + K^{1}_{n\theta} t_{nh \lambda} \right) \norm{g_1 - g_2}_{\CH},
\end{align*} 
\begin{align*}
\norm{F_{nh}(\zeta_1,g_1) - F_{nh}(\zeta_2,g_2)}_{\CH} &\le \left( E^{2}_{nh} + K^{3}_{nh} t_{nh\lambda} + K^{4}_{nh} t_{n\theta \lambda} \right) \norm{\zeta_1 - \zeta_2}_{\R^p}\\
&\quad +\left( E^{1}_{nh} + K^{2}_{nh} t_{n\theta \lambda} + K^{1}_{nh} t_{nh \lambda} \right) \norm{g_1 - g_2}_{\CH}.
\end{align*}
Thus,
\begin{align*}
\norm{\vec{F}_{n}(\zeta_1,g_1) - \vec{F}_{n}(\zeta_2,g_2)}_{\R^p \times \CH} &\le \left[ E^2_{n\theta} + E^2_{nh} + (K^{3}_{n\theta} + K^{3}_{nh})t_{nh\lambda} +(K^{4}_{n\theta} +K^{4}_{nh})t_{n\theta \lambda} \right] \norm{\zeta_1 - \zeta_2}_{\R^p}\\
&\quad  + \left[ E^1_{n\theta} + E^1_{nh} + (K^{1}_{n\theta} + K^{1}_{nh})t_{nh\lambda} +(K^{2}_{n\theta} +K^{2}_{nh})t_{n\theta \lambda} \right] \norm{g_1 - g_2}_{\CH}\\
& \le 2 r_{n\theta}(\lambda)\norm{\zeta_1-\zeta_2}_{\R^p} + 2r_{nh}(\lambda)\norm{g_1-g_2}_{\CH}.
\end{align*}
Since $r_{n\lambda}(\lambda) < \frac{1}{2}$, $r_{nh}(\lambda) < \frac{1}{2}$, we have shown that $\vec{F}(\zeta,g)$ is a contraction on $S_\theta(t_{n \theta \lambda}) \times S_{h}(t_{n h \lambda})$. By the contraction mapping theorem, there exists a unique $(\zeta_{n \lambda}, g_{n\lambda}) \in S_\theta(t_{n \theta \lambda}, \alpha) \times S_{h}(t_{n h \lambda}, \alpha)$ such that 
\[\vec{F}_{n}(\zeta_{n \lambda}, g_{n\lambda}) = (\zeta_{n \lambda}, g_{n\lambda}).\]
Let $\hat{\theta} = \theta_{\lambda} + \zeta_{n\lambda} \in S_{\theta, \theta_{\lambda}}(2d_{n\theta}(\lambda))$ and $\hat{h} = h_{\lambda} + g_{n\lambda} \in S_{h, h_{\lambda}}(2d_{nh}(\lambda))$. Then $(\hat{\theta},\hat{h})$ is the unique root of $Z_{n\theta}(\hat{\theta},\hat{h}) = 0$ and $Z_{nh}(\hat{\theta},\hat{h}) = 0$. 

To get the upper bound, we observe that 
\begin{align*}
(\bar{\theta}_{n\lambda}-\hat{\theta}, \bar{h}_{n\lambda}-\hat{h}) &= (\bar{\theta}_{n\lambda}-\theta_{\lambda}, \bar{h}_{n\lambda}-h_{\lambda})-(\hat{\theta}-\theta_{\lambda}, \hat{h}-h_{\lambda})\\
&= \vec{F}_{n}(0,0) - \vec{F}_{n}(\zeta_{n\lambda},g_{n\lambda}).
\end{align*}
Therefore,
\begin{align*}
\norm{\bar{\theta}_{n\lambda}-\theta_{n\lambda}}_{\R^p} + \norm{\bar{h}_{n\lambda}-h_{n\lambda}}_{\CH} &= \norm{\vec{F}_{n}(\zeta_{n\lambda},g_{n\lambda})- \vec{F}_{n}(0,0)}_{\R^p \times \CH} \\
&\le 2 r_{n\theta}(\lambda)\norm{\zeta_{n\lambda}}_{\R^p} + 2 r_{nh}(\lambda)\norm{g_{n\lambda}}_{\CH}\\
&\le 4 \left[ r_{n\theta}(\lambda)d_{n\theta}(\lambda) + r_{nh}(\lambda)d_{nh}(\lambda) \right].
\end{align*}
This completes the proof.
\end{proof}

\bibliographystyle{dcu}
\bibliography{dissertation5}
\end{document}